\documentclass{amsart}
\usepackage{amsrefs}
\usepackage{url}
\usepackage[colorlinks=true, linkcolor=black, citecolor=black]{hyperref}
\usepackage{amsmath, amssymb, amsthm}
\usepackage[english]{babel}
\usepackage{tikz-cd}

\newtheorem{thm}{Theorem}[section]
\newtheorem{cor}[thm]{Corollary}
\newtheorem{prop}[thm]{Proposition}
\newtheorem{lem}[thm]{Lemma}

\theoremstyle{definition}
\newtheorem*{conv}{Convention}
\newtheorem{question}[thm]{Question}

\title{Random dynamics of plane polynomial automorphisms}
\author{Arnaud Nerrière}
\date{}
\address{Université Bourgogne Europe, CNRS, IMB UMR 5584, 21000 Dijon, France}
\email{arnaud.nerriere@u-bourgogne.fr}

\begin{document}

\begin{abstract}
    Let $\mu$ be a finitely supported probability measure on the group of automorphisms of $\mathbb{A}^2_\mathbb{C}$. If the group generated by the support of $\mu$ is non-elementary and contains only loxodromic elements, we show the existence of dynamical Green functions associated to random products. We derive consequences for $\mu$-stationary measures: they are compactly supported, and we can apply Roda's theorem to show stiffness when the action is non-dissipative.
\end{abstract}

\maketitle
\tableofcontents

\section{Introduction}

Let $\mu$ be a finitely supported probability measure on the group $G$ of automorphisms of $\mathbb{A}^2_\mathbb{C}$. The data of $\mu$ defines a random dynamical system on $\mathbb{C}^2$. Denote by $\Sigma_\mu$ the support of $\mu$. We set $\Omega:=\Sigma_\mu^\mathbb{N}$, and if $\omega=(f_n)_{n \geq 0} \in \Omega$, we denote by $f^n_\omega$ the left product of the $n$ first terms of $\omega$, that is, $$f^n_\omega:=f_{n-1} \cdots f_0.$$

\subsection{Stationary measures}
Consider the empirical measures, defined by  $$\nu_N(\omega,p):=\frac{1}{N}\sum_{n=0}^{N-1} \delta_{f^n_\omega(p)}$$
for any $(\omega, p) \in \Omega \times \mathbb{C}^2 $.
We may have escape of mass: the sequence of empirical measures converges to zero for the weak topology when $ f^n_\omega(p)  $ goes to infinity. On the other hand, if $p$ is contained in a compact subset of the plane which is invariant by every automorphism of $\Sigma_\mu$, then by Breiman's law of large numbers, weak limits of $\nu_N(\omega,p)$ are almost surely $\mu$-stationary. Recall that a Borel probability measure $\nu$ on $\mathbb{C}^2$ is \textbf{$\mu$-stationary} if 
$$\sum_{f \in \Sigma_\mu} \mu(f)f_*\nu=\nu.$$

\begin{conv}
Throughout this paper, a measure on $\mathbb{C}^2$ will always mean a Borel probability measure.
\end{conv}

Our main goal is to classify $\mu$-stationary measures when the group $\Gamma_\mu:=\langle \Sigma_\mu \rangle$ generated by the support of $\mu$ is non-elementary. In this framework, we expect that the existence of a $\mu$-stationary measure is a rare phenomenon.

Jung's theorem \cite{Jung} asserts that the group $G$ of polynomial automorphisms of the plane is the amalgamated product of its affine and elementary subgroups. Any automorphism which is not conjugate to any elementary map is called \textbf{loxodromic}. A Hénon map $h(x,y)=(y,p(y)-ax)$ with $a \in \mathbb{C}\setminus \{0\}$ and $\deg(p) \geq 2$ is loxodromic. By a theorem of Friedland and Milnor \cite{FM}, every loxodromic automorphism is conjugate to a product of Hénon maps. Hénon maps have a rich dynamical behaviour and loxodromic automorphisms are precisely the elements of $G$ that have positive topological entropy. We say that $\mu$ is \textbf{non-elementary} if $\Gamma_\mu$ contains two loxodromic elements generating a free group of rank two, and \textbf{purely loxodromic} if $\Gamma_\mu \setminus \{1\}$ contains only loxodromic elements. 

Our first result shows that non-trivial recurrence can only appear in compact subsets of the plane.

\begin{thm}\label{thm2}
    Let $\mu$ be a finitely supported, non-elementary and purely loxodromic measure on $G$. If $\nu$ is a $\mu$-stationary measure on $\mathbb{C}^2$, then $\nu$ is compactly supported. 
\end{thm}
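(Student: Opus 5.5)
The plan is to build a random Green function and use it as a Lyapunov-type function that forces stationary measures to live where it vanishes. For $\omega \in \Omega$, set
$$G^+_\omega(p):=\lim_{n\to\infty}\frac{1}{\deg(f^n_\omega)}\log^+\|f^n_\omega(p)\|.$$
The first step is to show that this limit exists for $\mu^{\mathbb{N}}$-almost every $\omega$ (in fact for every $\omega$). It should be continuous and plurisubharmonic, and it should satisfy the growth bound $G^+_\omega(p)\ge \log\|p\|-C$ on a suitable filtration region $V^+=\{|y|\ge\max(|x|,R)\}$.

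The key input is that $\Gamma_\mu$ is purely loxodromic and finitely generated. Using Jung's amalgamated product structure, I would conjugate simultaneously so that every element of $\Sigma_\mu$ is a product of Hénon maps composed with affine maps. I would then argue, via the Bass--Serre tree action with no elliptic elements, that the random words $f^n_\omega$ reduce with a uniformly controlled amount of cancellation. This gives $\deg(f^n_\omega)$ growing exponentially with almost multiplicativity,
$$\deg(f_n f^n_\omega)\asymp \deg(f_n)\deg(f^n_\omega),$$
and it gives a common filtration $V^+,V^-,W$ (with a uniform radius $R$) that is respected by all the generators, up to bounded words. The standard Hénon telescoping argument, $\bigl|\log\|h(p)\|-d\log\|p\|\bigr|\le C$ on $V^+$, then yields uniform convergence. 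The same construction applied to the inverses gives $G^-$ and the escape of orbits through $V^-$.

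The second step uses stationarity. Suppose $\nu$ gives positive mass to $\mathbb{C}^2\setminus \overline{B(0,R')}$. Each point outside a large ball is in $V^+$ or $V^-$, or is mapped into one of these by a bounded number of generators. In $V^+$ the forward random orbit escapes: $\|f^n_\omega(p)\|\to\infty$ for every $\omega$. Since $\nu=\int (f^n_\omega)_*\nu\,d\mu^{\mathbb{N}}(\omega)$, the mass of $\nu$ in $V^+\cap\{\|p\|\ge R'\}$ would be transported to infinity. I would make this precise by considering the function $\phi(p)=\min(1,\log^+\|p\|/M)$, or simply the indicator of $\{G^+_\omega>t\}$, and applying dominated convergence. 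This shows $\nu(V^+\setminus K)=0$ for a fixed compact $K$.

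For points near $V^-$, forward iteration contracts them toward the region $W\cup V^+$ in finite time, so the same argument applies after a bounded number of steps. Because stationarity is a statement about forward iterates only, this leaves a set on which orbits stay bounded. Hence $\nu$ is supported in the compact set $K^+=\{p : \sup_{n,\omega}\|f^n_\omega(p)\|\le R\}$, or at least in $\{G^+_\omega=0 \text{ a.e.}\}\cap \overline{B(0,R)}$.

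I expect the main obstacle to be the first step: obtaining a uniform filtration for all elements of $\Sigma_\mu$ at once. Different Hénon-type generators may have conjugated filtrations that are incompatible. A random product could in principle also stay in an "affine" part of the amalgam, where cancellations make $\deg(f^n_\omega)$ non-multiplicative. My first attempt would be to use purely loxodromic plus non-elementary to get that the action on the Bass--Serre tree is free with a positive drift. Almost surely, $f^n_\omega$ then has translation length growing linearly, and the cancellation between consecutive factors is bounded by a constant depending only on $\Sigma_\mu$. This should allow me to regroup $f^n_\omega$ into blocks, each a genuine composition of Hénon maps in a fixed normal form respecting one filtration.
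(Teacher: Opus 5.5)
\textbf{There is a genuine gap in your second step.} It sits exactly where the random problem differs from the deterministic one: controlling the mass of $\nu$ near the exceptional point at infinity.

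Your claim that points of $V^-$ reach $W\cup V^+$ after a bounded number of steps and then escape is false already for a single Hénon map. The time spent in $V^-$ is unbounded as the point tends to $I$. A point that enters $W$ need not escape at all: $K^+$ is unbounded, lies in $W\cup V^-$, and accumulates on $I$. So forward escape says nothing about $\nu$ near the point where $\overline{\{G_\omega=0\}}$ meets $L_\infty$. The set you describe as ``a set on which orbits stay bounded'' is precisely the unbounded set that has to be excluded.

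For one map this set is excluded using invariance and the expansion of $f^{-1}$ on a backward-invariant $V^-$. No such common $V^-$ exists here. The non-escaping direction at infinity is the indeterminacy point $p(\omega)$ of the stabilized normal form of $f^n_\omega$, and it depends on $\omega$. In the ping-pong example of Section~\ref{section2.2} it takes the four values $I_{f^{\pm 1}}, I_{g^{\pm 1}}$, so no filtration is adapted to all generators at once.

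Two further problems in your first step:
\begin{itemize}
\item Your statements ``for every $\omega$'' are also false. If $f,f^{-1}\in\Sigma_\mu$, take $\omega=(f,f^{-1},f,f^{-1},\dots)$.
\item Only the stabilized blocks $h_k=a_ke_k$ of the normal form respect a filtration. Making that filtration uniform requires Lamy's finiteness result (Proposition~\ref{prop2}). Bounded cancellation per step is not enough, because merged letters could make the points $a_k(I)$, to which the blocks contract $L_\infty$, accumulate on $I$.
\end{itemize}

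\textbf{The missing idea is to exploit that this exceptional direction is random.}
\begin{itemize}
\item Suppose $p(\omega)$ takes two distinct values with positive probability. Then every point near infinity lies away from one of them, so it escapes with probability bounded below. This contradicts recurrence for the skew product $(\omega,q)\mapsto((f_{n+1})_{n\geq 0},f_0(q))$, which preserves $\mu^{\mathbb{N}}\otimes\nu$.
\item If instead $p(\omega)$ is almost surely constant, the paper passes to blow-ups. The base points $p_l(\omega)$ of $f^n_\omega$ stabilize (Proposition~\ref{prop10}).
\item Two independent itineraries have different base-point sequences almost surely (Proposition~\ref{prop12}). This uses Proposition~\ref{prop9} (same base points forces equality up to an affine map) together with the positive asymptotic entropy of Corollary~\ref{cor3}.
\item Hence, at some minimal level $l$, the point $p_l(\omega)$ is non-deterministic. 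The same dichotomy argument then runs on the model obtained by blowing up $p_1,\dots,p_{l-1}$ (Propositions~\ref{prop11} and~\ref{prop13}).
\end{itemize}
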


The assumptions in the above theorem are not very restrictive. Indeed, by a theorem of Lamy \cite{La3}, if $f$ and $g$ are two loxodromic elements that do not satisfy any relation of the form $f^n=g^m $ for some non-zero $n,m \in \mathbb{Z}$, then, there exists $ k \geq 1$ such that $f^k$ and $g^k$ generate a non-elementary, purely loxodromic subgroup of $G$. For any $d>1$, we denote by $G_d$ the variety of automorphisms of degree at most $d$. We have that, outside a countable union of hypersurfaces in $G_d\times G_d$, two automorphisms generate a free, purely loxodromic subgroup. See Section \ref{section2.2}.

We say that the random dynamical system $(\mathbb{C}^2,\mu)$ is \textbf{stiff} if every $\mu$-stationary measure $\nu$ on $\mathbb{C}^2$ is \textbf{$\Gamma_\mu$-invariant}, that is, $\gamma_*\nu=\nu$ for every $\gamma \in \Gamma_\mu$. We denote by $\mathrm{Jac}(f)\in \mathbb{C}$ the complex jacobian of an element $f \in G$. Notice that it is constant. We say that $\mu$ is \textbf {non-dissipative} if the random dynamical system is not volume decreasing, that is, $$\sum_{f \in \Sigma_\mu} \mu(f)\log |\mathrm{Jac}(f)|\geq  0.$$
Remark that if $\mu$ is symmetric, that is, $\mu(f)=\mu(f^{-1})$ for every $f \in G$, then $\mu$ is non-dissipative. Notice that we cannot expect stiffness to hold in the dissipative case. Indeed, one can consider a Hénon map $f$ having an attracting fixed point at the origin, and a small perturbation $g$ of $f$. Generically, $f$ and $g$ generate a non-elementary subgroup of $G$, and a small neighborhood of the origin will support a non-invariant $\mu$-stationary measure for $\mu=\frac{1}{2}(\delta_f + \delta_g)$. 
In the non-dissipative case, we have the following instance of stiffness.

\begin{thm}\label{thm3}
     Let $\mu$ be a finitely supported, non-elementary and purely loxodromic measure on $G$.
     If $\mu$ is non-dissipative, then every $\mu$-stationary measure is $\Gamma_\mu$-invariant.
\end{thm}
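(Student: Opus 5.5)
By Theorem \ref{thm2}, any $\mu$-stationary measure $\nu$ is compactly supported. The plan is to reduce to Roda's stiffness theorem for random dynamics of automorphisms on a compact region, in the spirit of Brown--Rodriguez Hertz and Cantat--Dujardin. Roda's theorem, as the abstract announces, applies to non-dissipative actions. First I would restrict the dynamics to a compact invariant set. Let $K$ be the support of $\nu$. Stationarity gives $\nu = \sum \mu(f) f_*\nu$, so $f(K)\subset K$ for each $f\in\Sigma_\mu$. Since $K$ is compact and each $f$ is a polynomial diffeomorphism, the random dynamical system restricted to a neighborhood of $K$ has bounded derivatives. The integrability hypotheses of Roda's theorem, namely $\log^+\|Df\|$ and $\log^+\|Df^{-1}\|$ bounded on $K$ with finite support, are therefore automatic.

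Next I would check the remaining hypotheses on the ergodic components; it suffices to treat ergodic $\nu$. The volume condition should come directly from non-dissipativity. The sum of the two Lyapunov exponents of the fibered cocycle $Df_\omega$ over $(\Omega\times\mathbb{C}^2,\mu^{\mathbb{N}}\otimes\nu)$ equals $\sum\mu(f)\log|\mathrm{Jac}(f)|^2\ge 0$, since the real Jacobian is $|\mathrm{Jac}|^2$. The dichotomy then splits into two cases.

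In the first case, the exponents are zero, or more generally the case is not "uniformly expanding". Here an invariance principle (Avila--Viana / Ledrappier type, as used by Brown--Rodriguez Hertz) should give that the stable directions are non-random, or that $\nu$ is invariant.

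In the second case there is a positive exponent, and the expected conclusion of Roda's theorem is: either $\nu$ is $\Gamma_\mu$-invariant, or the stable (or unstable) line field $E^s(\omega,x)$ is non-random, i.e.\ depends only on $x$, giving a measurable $\Gamma_\mu$-invariant line field, or $\nu$ is supported on a finite or algebraic invariant set. I would then exclude the exceptional alternatives using non-elementarity and pure loxodromy, as follows.

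\begin{itemize}
\item A finite $\Gamma_\mu$-invariant orbit forces $\nu$ to be invariant anyway, since it is then an average over a finite orbit, which is invariant.
\item An invariant algebraic curve is impossible because every loxodromic automorphism has no invariant curves in $\mathbb{C}^2$, a Friedland--Milnor/Bedford--Smillie fact.
\end{itemize}

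The main obstacle is ruling out a measurable invariant line field on a positive-measure set. My first attempt would be to use the dynamics of a single loxodromic element $g\in\Gamma_\mu$. If $g$ preserves a line field $\nu$-a.e., then $g$ has an invariant measurable conformal structure. Combined with the Green function results of this paper, I would show that such a field must coincide with the stable foliation of $g$ on $K_g^\pm$. I would then take a second element $h$ generating a free group with $g$, and argue that the stable laminations of $g$ and $h$ are transverse almost everywhere. This should follow by a Cantat--Dujardin style argument using the distinct Green currents $T^+_g\ne T^+_h$ of non-commensurable loxodromics, which contradicts a common invariant line field.

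I expect this transversality/rigidity step to be the hard part. It requires identifying invariant line fields with Bedford--Smillie stable directions on the support of a possibly singular stationary measure.
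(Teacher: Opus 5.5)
Up to the decisive step your outline matches the paper's proof. You use compact support from Theorem \ref{thm2} and $\lambda^++\lambda^-=\sum_f\mu(f)\log|\mathrm{Jac}(f)|\geq 0$; for the complex exponents there is no square, but only the sign matters. When there is no negative exponent you invoke the Crauel/Avila--Viana invariance principle, and in the hyperbolic case Roda's theorem.

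The gap is in the one input Roda's argument needs that is specific to this setting: the stable manifolds $W^s(\omega,q)$ must not be non-random. On K3 surfaces this is Roda's Corollary 3.3.2, which comes from the Cantat--Dujardin cohomological argument; the paper notes that argument is unavailable here because $H^{1,1}(\mathbb{P}^2)$ is one-dimensional. Your route to this input does not work.
\begin{enumerate}
\item The measure $\nu$ is only $\mu$-stationary. It is not known to be $g$-invariant for any single $g\in\Gamma_\mu$, and that is the conclusion you are after. So there is no Oseledets or Bedford--Smillie theory of $g$ with respect to $\nu$.
\item Consequently nothing forces a measurable line field defined $\nu$-almost everywhere to agree with the stable directions of $g$. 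Those directions are only defined almost everywhere for $g$-invariant hyperbolic measures such as $\mu_g$, which may be singular with respect to $\nu$.
\item Nor does $T^+_g\neq T^+_h$ imply transversality of stable laminations at $\nu$-typical points. More fundamentally, the objects to be distinguished are the random stable manifolds along independent itineraries, not the laminations of two fixed elements.
\end{enumerate}

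The missing idea is to use the random Green functions of Theorem \ref{thm1} and their behaviour at infinity on blow-ups.
\begin{enumerate}
\item For $q\in\mathrm{supp}\,\nu$, the orbit $f^n_\omega(q)$ stays in the compact forward-invariant set $\mathrm{supp}\,\nu$. So every point of $W^s(\omega,q)$ has bounded $f^n_\omega$-orbit, and $W^s(\omega,q)\subset\{G_\omega=0\}$.
\item Since $W^s(\omega,q)$ is biholomorphic to $\mathbb{C}$, it is unbounded. Its closure therefore meets the divisor at infinity of every model $X_l$ obtained by blowing up $p_1(\omega),\dots,p_l(\omega)$. By Proposition \ref{prop11} it meets it exactly at $p_{l+1}(\omega)$.
\item Hence $W^s(\omega,q)=W^s(\omega',q)$ forces the base-point sequences of $\omega$ and $\omega'$ to coincide.
\item Proposition \ref{prop12} rules this out almost surely. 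An atom for the law of $(p_l(\omega))_l$ would, via Proposition \ref{prop9}, Lamy's finite families and logarithmic geodesic tracking, confine the right walk with positive probability to sets of cardinality $O(n\,|\mathcal{H}|^{C\log n})$. This contradicts the positivity of the asymptotic entropy (Corollary \ref{cor3}).
\end{enumerate}
This gives Proposition \ref{prop15}, which is exactly what replaces Roda's Corollary 3.3.2. Without it, or a genuine substitute, your argument does not reach stiffness.
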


Thus, the classification of $\mu$-stationary measures boils down to the classification of $\Gamma_\mu$-invariant measures. Roda's theorem \cite{Roda} gives a further description of hyperbolic stationary measures.

\begin{thm}\label{thm4}
    Suppose $\mu$ is a finitely supported, non-elementary, purely loxodromic and non-dissipative probability measure on $G$. Then, every ergodic hyperbolic $\mu$-stationary measure on $\mathbb{C}^2$ is compactly supported, $\Gamma_\mu$-invariant and in one the following cases:
    \begin{enumerate}
        \item The support of $\nu$ is finite.
        \item The stable and unstable conditional measures are supported on real curves, they are absolutely continuous with respect to the Lebesgue measure on these curves, and jointly integrable.
    \item $\nu$ is absolutely continuous with respect to the Euclidean volume on $\mathbb{C}^2$.
    \end{enumerate}
\end{thm}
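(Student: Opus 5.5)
The plan is to reduce Theorem \ref{thm4} to Roda's classification theorem \cite{Roda}, using Theorems \ref{thm2} and \ref{thm3} to supply its hypotheses. Roda's theorem concerns random dynamics of holomorphic diffeomorphisms on a complex surface. Under suitable hypotheses — an invariant (or stationary) ergodic hyperbolic measure with compact support, and a non-elementary action — it yields the trichotomy: finite support, conditionals on real curves that are absolutely continuous and jointly integrable, or absolute continuity with respect to volume. The work therefore consists of checking these hypotheses for our $\nu$.

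The steps are as follows.
\begin{enumerate}
\item Let $\nu$ be an ergodic hyperbolic $\mu$-stationary measure. Theorem \ref{thm2} shows that $\nu$ is compactly supported. Fix a compact set $K\supset \mathrm{supp}(\nu)$. Since $\Sigma_\mu$ is finite, the maps $f\in\Sigma_\mu$ are uniformly $C^2$ (and their inverses too) on a neighborhood of $K$. This is what makes the integrability conditions in Oseledets and Pesin theory hold, namely $\log^+\|Df^{\pm1}\|\in L^1(\mu\otimes\nu)$ and $C^2$ bounds. These are the standing assumptions in Roda's setting, which is stated for compact surfaces or compactly supported measures.
\item Since $\mu$ is non-dissipative, Theorem \ref{thm3} shows that $\nu$ is $\Gamma_\mu$-invariant.
\item Hyperbolicity means the two Lyapunov exponents satisfy $\lambda^-<0<\lambda^+$. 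Their sum equals $\int\log|\mathrm{Jac}|^2\,d\mu\ge0$, because the Jacobian is constant. Roda's theorem requires ergodicity and some non-degeneracy of the random action. This is where non-elementarity of $\mu$ enters: we will show that the stable directions are non-deterministic, or that the relevant invariant-structure alternative is excluded, because $\Gamma_\mu$ contains a free group of loxodromics.
\item Apply Roda's theorem to conclude the trichotomy.
\end{enumerate}

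The main obstacle is step 3: matching Roda's precise non-degeneracy hypothesis. That hypothesis is likely that the stable distribution is non-invariant, or that there is no $\nu$-a.e. $\Gamma_\mu$-invariant measurable line field, or some analogous alternative. One of the alternatives in Roda's conclusion may then be an invariant holomorphic foliation or fibration, and it would have to be excluded. The first thing I would try is to use the fact that, for purely loxodromic non-elementary $\Gamma_\mu$, no invariant algebraic foliation or fibration can exist. An invariant rational fibration would force elements to be conjugate to elementary maps, by Jung's theorem and Friedland–Milnor. I would then argue that an invariant measurable line field on the compact support, together with $\Gamma_\mu$-invariance of $\nu$, would produce such a structure, or would make $\nu$ atomic, which falls into case (1). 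If Roda's theorem is stated on compact manifolds, an additional technical point is to localize it to the compact invariant set $\mathrm{supp}(\nu)$ in $\mathbb{C}^2$. I expect this to be routine given uniform bounds on $K$.
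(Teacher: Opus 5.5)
Your steps 1, 2 and 4 follow the paper's outline: compact support from Theorem~\ref{thm2}, then Roda's theorem applied to the compactly supported measure. But step 3 is where all the work lies, and the route you sketch for it does not work.

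The hypothesis of Roda's theorem that does \emph{not} localize routinely to $\mathrm{supp}(\nu)\subset\mathbb{C}^2$ is exactly the one you leave vague. The stable manifolds must depend non-trivially on the itinerary: for $\mu^\mathbb{N}\otimes\nu$-a.e.\ $(\omega,q)$ and $\mu^\mathbb{N}$-a.e.\ $\omega'$, $W^s(\omega,q)\neq W^s(\omega',q)$. This is Roda's Corollary 3.3.2. On a compact surface it comes from the Cantat--Dujardin argument comparing cohomology classes in $H^{1,1}(X,\mathbb{R})$ of currents carried by stable manifolds. That argument is unavailable here, since $H^{1,1}(\mathbb{P}^2)$ is one-dimensional.

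Your proposed substitute is that a $\Gamma_\mu$-invariant measurable line field on the support would yield an invariant algebraic foliation or fibration, or force $\nu$ to be atomic. Nothing makes this implication work. The object is only defined $\nu$-almost everywhere on a possibly fractal compact set. Already for a single Hénon map $f$, the Oseledets stable directions of its measure of maximal entropy form an $f$-invariant measurable line field for a non-atomic measure, although $f$ preserves no algebraic foliation or fibration. So a soft argument of this kind cannot succeed. You need a global way to tell $W^s(\omega,q)$ apart from $W^s(\omega',q)$, and non-elementarity must enter far more quantitatively than through ``$\Gamma_\mu$ contains a free group''.

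The paper supplies this input as Proposition~\ref{prop15}, built on Sections 3 and 4, in three steps:
\begin{enumerate}
\item Because $\mathrm{supp}(\nu)$ is compact, $W^s(\omega,q)\subset\{G_\omega=0\}$.
\item Because $W^s(\omega,q)\simeq\mathbb{C}$, its closure in the blow-up $X_l$ of $p_1(\omega),\dots,p_l(\omega)$ meets the divisor at infinity, and by Proposition~\ref{prop11} it does so only at $p_{l+1}(\omega)$.
\item Proposition~\ref{prop12} shows that the base-point sequences of two independent itineraries differ almost surely. Its ingredients are Proposition~\ref{prop9} (base points determine an automorphism up to an affine map), logarithmic geodesic tracking on the Bass--Serre tree, and positivity of the asymptotic entropy (Corollary~\ref{cor3}). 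This last ingredient is where non-elementarity genuinely enters.
\end{enumerate}

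Citing Theorem~\ref{thm3} in your step 2 does not reduce the work. In the paper, Theorems~\ref{thm3} and~\ref{thm4} come out of the same application of Roda's theorem. Even once invariance is known, Roda's trichotomy still rests on this same non-randomness of stable manifolds.
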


The case $(2)$ may happen for example if there are two Hénon maps defined over $\mathbb{R}$ preserving a Borel subset $B\subset \mathbb{R}^2$ of positive area. Note that if the stronger inequality $$\sum_{f \in \Sigma_\mu} \mu(f) \log | \mathrm{Jac}(f) | >0$$
is satisified, then every $\mu$-stationary hyperbolic ergodic measure is finitely supported. See \cite[Section 2.6.2]{Roda} and \cite[Section 2.6.3]{Roda}. To our knowledge, there is no known example of a $\Gamma_\mu$-invariant measure that is not finitely supported in this setting. 

\begin{question}
 Does there exists a non-elementary and non-dissipative finitely supported probability measure on $G$ such that $\mu$ admits a non-atomic $\mu$-stationary measure?    
\end{question}

Let us mention the related question, raised by Cantat and Dujardin \cite[Question 1.14]{CD6}: does there exists a non-elementary group $\langle f,g \rangle$ such that $f$ and $g$ fix the origin and are simultaneously conjugate to elements of $\mathrm{SU}_2(\mathbb{C})$ by a local biholomorphism? 

Under the additional hypothesis that the $\Gamma_\mu$-invariant measure has positive entropy for some $\gamma \in \Gamma_\mu$, we have the following classification result.

\begin{thm}\label{thm5}
    Let $f$ be a loxodromic element of $G$, and suppose that $\nu$ is an ergodic $f$-invariant measure such that $h_\nu(f) >0$. Suppose that the group $$G_\nu:=\{g \in G, \, g_* \nu =\nu \}$$
    is non-elementary. Then one of the following holds.
    \begin{enumerate}
        \item There exists a finitely supported, non-elementary and purely loxodromic measure $\mu$ on $G_\nu$, such that $\nu$ is hyperbolic as a $\mu$-stationary measure and the stable and unstable conditional measures are supported on real curves, and are absolutely continuous with respect to the Lebesgue measures on these curves, and jointly integrable.
         \item The group $G_\nu$ preserves the Euclidean volume on $\mathbb{C}^2$ and $\nu$ is the restriction of the volume to a subset of positive area which is $\Gamma_\mu$-invariant.
    \end{enumerate}
\end{thm}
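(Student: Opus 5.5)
We reduce Theorem \ref{thm5} to Theorem \ref{thm4}. The work is to produce a suitable measure $\mu$ on $G_\nu$ and then rule out the finite-support case.

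\textbf{Step 1: constructing $\mu$.} Since $G_\nu$ is non-elementary, it contains two loxodromic elements generating a free group of rank two. We may take one of them to be a power of $f$, or conjugate $f$ by elements of $G_\nu$ to get a second loxodromic element not commensurable with $f$. By Lamy's theorem \cite{La3}, after replacing them by powers $f^k, g^k$, they generate a non-elementary, purely loxodromic subgroup $\Gamma\subset G_\nu$. We set
$$\mu=\tfrac14(\delta_{f^k}+\delta_{f^{-k}}+\delta_{g^k}+\delta_{g^{-k}}).$$
This $\mu$ is symmetric, hence non-dissipative, and $\nu$ is $\Gamma$-invariant, hence $\mu$-stationary.

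\textbf{Step 2: ergodicity and hyperbolicity.} Theorem \ref{thm4} needs $\nu$ to be ergodic and hyperbolic as a $\mu$-stationary measure. Ergodicity is the delicate point, because $\nu$ is only assumed $f$-ergodic. We would argue that any $\Gamma$-invariant set is in particular $f^k$-invariant. The measure $\nu$ decomposes into at most $k$ ergodic components for $f^k$, permuted by $f$. Passing to an ergodic component of $\nu$ for $\mu$, or using the ergodic decomposition of stationary measures together with the fact that the conclusions are closed under the relevant operations, should reduce to the ergodic case.

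For hyperbolicity, the fibered entropy of the random walk is at least comparable to $h_\nu(f^k)=k\,h_\nu(f)>0$, suitably averaged. Alternatively we apply Ruelle's inequality: positive entropy forces a positive Lyapunov exponent. Non-dissipativity and symmetry give $\lambda^+ + \lambda^- = \int\log|\mathrm{Jac}|\,d\mu = 0$, so $\lambda^-=-\lambda^+<0$ and $\nu$ is hyperbolic. The main obstacle is to justify that the random exponent is positive, not just the exponent of $f$. We would use Furstenberg-type entropy (Abramov–Rokhlin) bounds, or invoke Roda's hyperbolicity criterion for measures with positive entropy.

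\textbf{Step 3: the trichotomy.} Theorem \ref{thm4} now places $\nu$ in one of three cases. Case (1) is excluded, since a finitely supported measure has zero entropy for $f$. Case (2) of Theorem \ref{thm4} gives conclusion (1) of Theorem \ref{thm5}. In case (3), $\nu$ is absolutely continuous with respect to volume.

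\textbf{Step 4: the absolutely continuous case.} Every $g\in G_\nu$ preserves $\nu$, and $\nu$ has density $\rho$. The invariance $\rho\circ g\,|\mathrm{Jac}(g)|^2=\rho$ on a set of positive measure, integrated over the support, gives $|\mathrm{Jac}(g)|=1$. Hence $G_\nu$ preserves volume. Ergodicity under $f$, which is volume-preserving, together with the invariance of $\rho$, forces $\rho$ to be constant on its support, since $\rho$ is an $f$-invariant function. Therefore $\nu$ is normalized volume restricted to an invariant set of positive area, which is conclusion (2).
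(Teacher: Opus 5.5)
\textbf{Gap in Step 2: hyperbolicity for a fixed symmetric $\mu$.} Steps 1, 3 and 4 are in line with the paper. Step 2, however, has a genuine gap, and it is exactly where the paper's argument differs from yours. For your \emph{fixed symmetric} $\mu$, hyperbolicity of $\nu$ does not follow from $h_\nu(f)>0$.

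Ruelle's inequality for the random system controls the fibered entropy $h_\mu(\nu)$, not $h_\nu(f)$, and the fibered entropy can vanish. For $\frac12(\delta_f+\delta_{f^{-1}})$ one has $f^n_\omega=f^{S_n}$ with $(S_n)$ a simple random walk on $\mathbb{Z}$, so both exponents are $0$ even though $h_\nu(f)>0$.

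The same cancellation can occur for your four-point measure. Suppose $Dg$ preserves each of the Oseledets line fields $E^u_f$ and $E^s_f$ of $f$ along $\nu$; nothing in your hypotheses rules this out. Then these line fields are invariant for the random derivative cocycle. Using the $f$- and $g$-invariance of $\nu$, the exponent along $E^u_f$ is
$\frac14\bigl(k\lambda^u(f)-k\lambda^u(f)+\int\log\|Dg^k|_{E^u_f}\|\,d\nu-\int\log\|Dg^k|_{E^u_f}\|\,d\nu\bigr)=0$,
and likewise along $E^s_f$. So $\nu$ is not hyperbolic for your $\mu$. Symmetry is precisely what kills the exponents. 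Your appeals to Abramov--Rokhlin bounds, or to Roda, do not supply the missing argument.

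\textbf{How the paper proceeds.} The paper does not insist on symmetry. After passing to powers, it considers the family $\mu_{a,b,c,d}=a\delta_f+b\delta_{f^{-1}}+c\delta_g+d\delta_{g^{-1}}$. It then invokes the argument of Cantat--Dujardin (after Brown--Rodriguez Hertz) to choose weights for which $\nu$ is hyperbolic. In the aligned situation above, any choice with $a\neq b$ and $c=d$ already works.

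The price is that non-dissipativity is no longer automatic. It can be recovered by replacing $\mu$ with the reflected measure $\check\mu$ if needed. This is legitimate because $\nu$ is $\Gamma$-invariant, so $\lambda^\pm(\check\mu)=-\lambda^\mp(\mu)$: hyperbolicity is preserved and the sign of $\lambda^++\lambda^-$ flips. Only then is Theorem \ref{thm4} applied, after which your Steps 3--4 go through.

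\textbf{A small repair in Step 4.} Integrating $\rho\circ g\,|\mathrm{Jac}(g)|^2=\rho$ only yields $1=1$. Instead, use that $S=\{\rho>0\}$ is $g$-invariant up to null sets and satisfies $0<\mathrm{vol}(S)<\infty$ (by compact support, Theorem \ref{thm2}). Hence $\mathrm{vol}(S)=|\mathrm{Jac}(g)|^2\,\mathrm{vol}(S)$, which gives $|\mathrm{Jac}(g)|=1$.
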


\subsection{Green functions}
The above results rely on the study of the dynamics at infinity of the group $\Gamma_\mu$. We extend our random dynamical system to the projective plane $\mathbb{P}^2=\mathbb{C}^2 \cup L_\infty$ by birational transformations. The theorem of Friedland and Milnor \cite{FM} reduces the study of the dynamics of loxodromic elements of $G$ to the study of the dynamics of Hénon maps. One crucial fact is that a Hénon map $f$ is algebraically stable on $\mathbb{P}^2$, that is, $\deg(f^n)=\deg(f)^n$ for every $n \geq 1$. Equivalently \cite{Si}, the line at infinity is never contracted by an iterate of $f $ to the indeterminacy point of $f $. Moreover, $f$ admits a super-attracting fixed point on the line at infinity, and its Green function $$G_f:=\lim \frac{1}{\deg(f^n)}\log^+\Vert f^n \Vert$$
is a well-defined plurisubharmonic, continuous function on $\mathbb{C}^2$ that plays an important role for the study of its dynamics \cites{BS, HO, FS, BLS, JXH}.

We would like to define Green functions associated to a random dynamical system, by 
    $$G_\omega:=\lim \frac{1}{\deg(f^n_\omega)} \log^+\Vert f^n_\omega \Vert.$$
One major issue is the lack of algebraic stability: the degree of $f^n_\omega$ is not equal in general to the product of the degrees of the $f_i$. Geometrically, this means that $f^n_\omega$ may contract the line at infinity to the point of indeterminacy of $f_{n+1}$. In general, we cannot expect to birationally conjugate the dynamics of $\Gamma_\mu$ so that it becomes algebraically stable.  
Still, we are able to show that Green functions are well-defined when $\mu$ is purely loxodromic.

\begin{thm}\label{thm1}
    Let $\mu$ be a finitely supported, non-elementary and purely loxodromic measure on $G$.
    \begin{enumerate}
    \item For $\mu^\mathbb{N}$-almost every $\omega \in \Omega$, $$G_\omega=\lim_{n \rightarrow +\infty} \frac{1}{\deg(f^n_\omega)} \log^+\Vert f^n_\omega \Vert  $$
    is a well-defined, plurisubharmonic and continuous function on $\mathbb{C}^2$.
    \item  There exists a finite subset $F$ of the line at infinity $L_\infty$ and for every $p\in F$, there exists an open subset $U(p)\subset \mathbb{C}^2$, such that the following hold.
    \begin{enumerate}
        \item The closure of $U(p)$ in $\mathbb{P}^2$ is a neighborhood of $L_\infty \setminus p$.
        \item For $\mu^\mathbb{N}$-almost every $\omega \in \Omega$, there exists $p(\omega) \in F$ such that  $$U(p(\omega)) \subset \{G_\omega >0\}.  $$
    \end{enumerate}
    \end{enumerate}    
\end{thm}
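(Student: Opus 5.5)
We start from the amalgamated product structure given by Jung's theorem and from the Bass--Serre tree $T$ associated to it. In this tree every loxodromic element acts as a hyperbolic isometry, and for $g\in G$ the logarithm of the degree of $g$ is comparable to the displacement of a base vertex $v_0$ (the vertex fixed by the affine group). So we have $\log\deg g = d(v_0, g v_0)$, up to the convention on edge lengths given by the degrees of the elementary factors. Because $\mu$ is non-elementary and finitely supported, random walk theory on trees (Kaimanovich, Maher--Tiozzo) applies to the walk $f^n_\omega v_0$. For almost every $\omega$ this walk converges to an end $\xi(\omega)\in\partial T$ at positive linear speed. We reinterpret this as follows: the reduced Jung normal forms of $f^n_\omega$ stabilize. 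There is an infinite reduced word $a_0 e_1 a_1 e_2\cdots$, alternating affine and elementary maps, such that for large $n$ the prefix of length $k(n)\to\infty$ of $f^n_\omega$ agrees with this limit word, and only a bounded-length tail is changing. Pure loxodromy is used here to ensure that no nontrivial elliptic cancellation occurs between the tail and the stable prefix. The point is that it rules out the case where a product of elements lands in a vertex stabilizer (an affine or elementary conjugate), which would destroy the degree growth.

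For part (1), write $f^n_\omega = P_n\circ R_n$, where $P_n$ is the stable prefix and $R_n$ has uniformly bounded degree, and compare with the Hénon-type normal form. Each reduced composition of the form $a e$, with $e$ elementary of degree $\geq 2$ and $a$ affine not elementary, behaves like a generalized Hénon map. For such maps there is a fixed filtration $V^+=\{|y|>\max(|x|,R)\}$, $V^-$, $V$ with the standard estimates: on $V^+$, $\log\|h(p)\|=\deg h\,\log\|p\|+O(1)$ with a uniform constant. Since $\Sigma_\mu$ is finite, only finitely many affine letters occur, up to the finitely many "syllables" appearing in normal forms of elements of $\Sigma_\mu$, so all constants are uniform. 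By the classical telescoping argument of Hubbard--Oberste-Vorth and Bedford--Smillie, the sequence $\frac{1}{\deg f^n_\omega}\log^+\|f^n_\omega\|$ is then uniformly Cauchy on compact sets. Its increments are bounded by $C/\deg(f^n_\omega)$, and this bound is summable because $\deg f^n_\omega$ grows exponentially by the positive drift. Each term is plurisubharmonic and continuous, so the uniform limit $G_\omega$ is too. The bounded tail $R_n$ only shifts the $\log$ by a bounded amount after division by the degree, so it does not affect the limit.

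For part (2), the finite set $F$ is the set of points of indeterminacy on $L_\infty$ of the finitely many possible first syllables. More precisely, $p(\omega)$ is the indeterminacy point of the first letter $a_0e_1$ of the limit word (in the ordering relevant to the right-most factor $f_0$), which depends only on finitely many choices. We take $U(p)$ to be the image, under the corresponding affine map, of the filtration region $V^+$ adapted to that letter. Its closure is a neighborhood of $L_\infty\setminus\{p\}$, which gives (a). To prove (b), we show that points of $U(p(\omega))$ stay in the successive regions $V^+$ for all subsequent letters: every next letter is reduced against the previous one, so the escaping cone is mapped into the next escaping cone. The norm then grows like the degree, giving $G_\omega\geq \log\|p\|-C>0$ after possibly shrinking $U(p)$ to $\|p\|>R$.

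The main obstacle is the almost-sure stabilization of normal forms, together with the compatibility between the finitely many transitions in $\Sigma_\mu$ and the filtration. Random products of elements of $\Sigma_\mu$ are not reduced words: cancellations at the junctions can lower the degree and move the relevant indeterminacy point. I will first try to control this via convergence to the boundary of the tree, which gives stabilization of the geodesic prefix. The pure loxodromy hypothesis should then guarantee that the nonstable part has bounded length almost surely, using the sublinear tracking of geodesics by the random walk.
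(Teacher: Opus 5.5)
There is a genuine gap, and it sits exactly where pure loxodromy has to enter. You claim that only finitely many letters occur ``since $\Sigma_\mu$ is finite'', but this is unjustified: the normal form of $f^n_\omega$ is not the concatenation of the normal forms of the $f_i$. At a junction, two affine letters multiply to $a'a\in A$. If $a'a\in S$, the neighbouring elementary letters merge into $e'(a'a)e\in E$, which may again lie in $S$, and so on. So a surviving letter can be a product of syllables coming from arbitrarily many factors $f_i$.

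Without a fixed finite alphabet you have no uniform $\epsilon$. You need every affine letter to keep $a(I)$ uniformly away from $I$, and a priori the point to which the stable block contracts $L_\infty$ can come arbitrarily close to the next indeterminacy point. Nor do you get uniform $R_\epsilon, C_\epsilon, M_\epsilon$. Hence there is no common invariant region $V^+_{R,\epsilon}$, and no finite set $F$ in part (2).

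The paper gets the finite alphabet from Lamy's theorem (Proposition~\ref{prop2}): a finitely generated purely loxodromic subgroup admits finite symmetric families $\mathcal{A}\subset A\setminus E$, $\mathcal{E}\subset E\setminus A$ in which every nontrivial element has a normal form. That is what pure loxodromy is for. It is not what stabilizes the prefix. Stabilization comes from Maher--Tiozzo applied to the right walk of the reflected measure $\check\mu$, i.e.\ to $(f^n_\omega)^{-1}$, and needs only non-elementarity; the left products $f^n_\omega v_0$ themselves do not converge. In particular the stable block is the one applied first: $f^n_\omega=h^{l(n),n}_\omega h^k_\omega$ with letters in $\mathcal{H}=\mathcal{A}\mathcal{E}$, not $P_n\circ R_n$.

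Second, the non-stable tail is not bounded. Its length $l(n)$ is governed by the distance from the walk to its limiting ray. Tracking results only give $O(\log n)$ (or $o(n)$) for this, and pure loxodromy gives nothing better. For the symmetric ping-pong measures of Section~\ref{section2.2}, $l(n)$ is essentially the number of final letters of a simple random walk on $F_2$ that will later be cancelled, which is almost surely unbounded.

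Your telescoping in $n$ also fails. Since $f^{n+1}_\omega=f_nf^n_\omega$ may cancel tail letters, $\deg f^{n+1}_\omega$ may drop, so consecutive increments are not $O(1/\deg f^n_\omega)$. The paper instead telescopes only along the stable sequence $h^k_\omega=h_k\cdots h_1$, which gives $u_k=\frac{1}{d^k_\omega}\log^+\Vert h^k_\omega\Vert\to u$. It then shows $|G_{n,\omega}-u_k|\le C/d^k_\omega$ for all $n\ge t_k$, independently of $l(n)$, in two cases:
\begin{enumerate}
\item Once the $h^k_\omega$-orbit of $q$ is in $V^+_{R,\epsilon}$, every tail letter belongs to the finite set $\mathcal{H}$. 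This set preserves $V^+_{R,\epsilon}$ with the uniform constant $M_\epsilon$ of Corollary~\ref{cor2}, so the error is at most $\frac{M_\epsilon}{d^k_\omega}\sum_j\frac{1}{d^{j,n}_\omega}\le\frac{2M_\epsilon}{d^k_\omega}$.
\item If the orbit stays in $B(0,R)$, one uses the bound $\sup\frac{1}{d_r\cdots d_1}\log^+\Vert h_r\cdots h_1\Vert<\infty$ on $B(0,R)$, taken over all products of elements of $\mathcal{H}$.
\end{enumerate}
Both estimates again rest on the finiteness of $\mathcal{H}$, which is what makes tails of arbitrary length harmless.
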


As a special case, we obtain that $f^n_\omega (q)$ goes to infinity when $q \in U(p(\omega))$. Moreover, the zero locus of $G_\omega$ is unbounded and thus the closure in $\mathbb{P}^2$ of the zero locus of $G_\omega$ intersects $L_\infty$ in one point $p(\omega),$ that is,  $$\overline{\{G_\omega=0\}} \cap L_\infty =p(\omega).$$

\begin{thm}\label{thm7}
    Under the assumptions of Theorem \ref{thm1}, we have  $G_\omega \neq G_{\omega'}$ for $\mu^\mathbb{N}\otimes \mu^\mathbb{N}$-almost every $(\omega, \omega')\in \Omega \times \Omega$.
\end{thm}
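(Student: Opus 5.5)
The idea is to tell $G_\omega$ and $G_{\omega'}$ apart by where their zero loci meet the line at infinity. By Theorem \ref{thm1}, for almost every $\omega$ there is a point $p(\omega)\in F$ with $\overline{\{G_\omega=0\}}\cap L_\infty=p(\omega)$. If $G_\omega=G_{\omega'}$, then $p(\omega)=p(\omega')$. So it is enough to show two things. First, that the law of $p(\omega)$ is not a Dirac mass; this is not yet sufficient. Second, and more robustly, that $G_\omega$ determines $\omega$ up to a null ambiguity, or at least that $G_\omega$ depends on a nontrivial amount of randomness.

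The plan is to use the equivariance relation. Writing $\omega=(f_0,\sigma\omega)$, the definition gives
$$G_\omega=\frac{1}{\lambda(\sigma\omega)}\,G_{\sigma\omega}\circ f_0,$$
where $\lambda$ is the renormalising degree ratio $\deg(f^{n}_{\sigma\omega}f_0)/\deg(f^{n-1}_{\sigma\omega})$ in the limit; in the purely loxodromic setting this limit should exist almost surely. Taking zero loci, this gives $\{G_\omega=0\}=f_0^{-1}\{G_{\sigma\omega}=0\}$.

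Suppose, aiming for a contradiction, that $\mathbb{P}(G_\omega=G_{\omega'})>0$ for independent $\omega,\omega'$. Then the distribution of the random function $G_\omega$ (a Borel measure on $\mathrm{PSH}\cap C^0$) has an atom, say $G_\ast$, of mass $m>0$. Choose $G_\ast$ to be an atom of maximal mass; this is possible since the masses of atoms are summable, so a maximal one exists. Stationarity of the law under the relation above gives
$$m=\sum_f\mu(f)\,\mathbb{P}\bigl(G_{\sigma\omega}=\lambda G_\ast\circ f^{-1}\text{ up to the scaling}\bigr).$$
Each term is at most $m$, so every $f\in\Sigma_\mu$ maps the zero locus $Z_\ast=\{G_\ast=0\}$ to another maximal atom's zero locus. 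Iterating, $\Gamma_\mu$ would map $Z_\ast$ into a finite collection of sets, namely the zero loci of the finitely many maximal atoms. A finite-index subgroup $\Gamma'$ would therefore preserve $Z_\ast$, and hence preserve its trace at infinity, the point $p_\ast\in L_\infty$.

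To get the contradiction, take two loxodromic elements $g,h\in\Gamma'$ generating a free group; they exist because $\Gamma'$ has finite index in a non-elementary group. Conjugating to Hénon form, a loxodromic map has on $L_\infty$ only its indeterminacy point $I^+$ and its attracting point $I^-$ as invariant points in its dynamics at infinity. Preservation of $Z_\ast$, an unbounded closed set with $\overline{Z_\ast}\cap L_\infty=p_\ast$, forces $p_\ast$ to be $I^+(g)$ (the zero set of a Green function accumulates on the indeterminacy point, not on the attracting point). The same holds for $h$ and for all of $\Gamma'$. If every element of $\Gamma'$ shared the same indeterminacy point at infinity, that would force $\Gamma'$ to be conjugate into an elementary-type subgroup via Jung's amalgam structure and Lamy's tree action: they would share a fixed end, contradicting the ping-pong of a non-elementary group.

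The main obstacle is this last step: translating ``all elements share the indeterminacy point of $Z_\ast$'' into elementarity, because conjugation changes the coordinates at infinity. I would try to phrase it via the Bass–Serre tree. Loxodromics in $\Gamma'$ preserving $Z_\ast$ must have a common attracting end, since $Z_\ast$ determines the repelling direction. This contradicts the existence of two loxodromics with disjoint fixed-end pairs.
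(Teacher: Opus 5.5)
Your reduction step is sound once it is phrased for the zero loci $Z_\omega=\{G_\omega=0\}$, or for the rays $\mathbb{R}_{>0}G_\omega$, rather than for $G_\omega$ itself. The scaling constant depends on $\sigma\omega$ and not only on $f_0$, so the identity has to be written that way. With that change, the maximal-atom argument does produce a finite-index subgroup $\Gamma'\le\Gamma_\mu$ with $\gamma(Z_*)=Z_*$ for all $\gamma\in\Gamma'$.

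The gap is the contradiction you try to draw from this. The only invariant of $Z_*$ you use is its trace $p_*$ on $L_\infty$, and that is too coarse. The claim that a group whose elements all share the indeterminacy point $p_*$ must be elementary is false. Conjugate a ping-pong group from Section~\ref{section2.2} by a generic $e\in E\setminus A$:
\begin{itemize}
\item the result is still non-elementary and purely loxodromic;
\item every nontrivial $\gamma$ has a reduced word beginning and ending with an elementary letter, so $I_\gamma=I_{\gamma^{-1}}=[1:0:0]$;
\item $p(\omega)=[1:0:0]$ for every $\omega$.
\end{itemize}
In such a group, $\gamma(Z_*)=Z_*$ is compatible with everything visible on $L_\infty$. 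Your preliminary hope that the law of $p(\omega)$ is not a Dirac mass also fails here. The picture of two distinct points $I^\pm$ on $L_\infty$ holds only for cyclically reduced elements. Conjugating to Hénon form moves the trace of $Z_*$ at infinity, so that picture cannot be transported back.

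What is missing is a proof that $Z_\omega$ encodes a whole end of the Bass--Serre tree, not just the first edge at the vertex $A$. This is exactly the step you flag as the main obstacle and leave unproved. The paper gets it by passing to blow-ups:
\begin{itemize}
\item the base points of $f^n_\omega$ stabilize to a sequence $(p_l(\omega))_l$ of infinitely near points (Proposition~\ref{prop10});
\item the closure of $Z_\omega$ in the model $X_l$ meets $X_{l,\infty}$ exactly at $p_{l+1}(\omega)$ (Proposition~\ref{prop11});
\item two automorphisms with the same base points differ by a left affine factor (Proposition~\ref{prop9}).
\end{itemize}
It then shows that the base-point sequences of independent $\omega,\omega'$ differ almost surely. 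Logarithmic geodesic tracking confines the walk to sets of subexponential cardinality, which contradicts positive asymptotic entropy (Corollary~\ref{cor3}).

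If you proved that $Z_\omega$ determines the limit end of $(f^n_\omega)^{-1}$ in $\partial\mathcal{T}$, equivariantly, your atom argument would work. It would give a finite-index subgroup fixing an end of $\mathcal{T}$, which contradicts non-elementarity. Alternatively, non-atomicity of the hitting measure in Theorem~\ref{prop3} would finish directly. As written, though, the proof has no valid final step.
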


Theorem \ref{thm7} will be a key ingredient for the classification of stationary measures.

\subsection{Ingredients of the proofs}
We first explain the ideas of the proof of Theorem \ref
{thm1}. We use the description of $G$ as an amalgamated product $G=A\underset{\cap}{*}E$ to write $f^n_\omega$ as a product $$f^n_\omega=a_{l(n),n}e_{l(n),n} \cdots a_{1,n}e_{1,n}$$
of affine maps $a_{i,n} \in A\setminus E $ and elementary maps $e_{i,n} \in E\setminus A $. By a theorem of Maher and Tiozzo \cite{MT1}, the induced right random walk on the Bass-Serre tree of $G$ converges almost surely to the the boundary of the tree. This implies that the first $k$ elements in the above normal form are constant when $n$ is sufficiently large, that is, $a_{i,n}$ and $e_{i,n}$ do not depend on $n $ for $i \leq k$ and $n\geq t_k$. We recover a kind of algebraic stability: $a_ke_k \cdots a_1e_1$ contracts the line at infinity to a point which is different from the point of indeterminacy of $e_{k+1}$, but which may be very close to it. To avoid this second issue, we apply a theorem of Lamy \cite[Proposition 3.18]{La1} which says that the orbit $\Gamma_\mu(L_\infty)$ of the line at infinity is finite when $\Gamma_\mu$ is purely loxodromic. This gives a good generating set for $\Gamma_\mu$ and we can use classical filtration properties to show the existence of $G_\omega$.

For Theorem \ref{thm7}, we remark that if $p(\omega) \neq p(\omega')$, then $G_\omega \neq G_{\omega'}$ by Theorem \ref{thm1}.(2). Otherwise, we prove that after a finite number of blow-ups, we obtain a model $\pi\colon X \rightarrow \mathbb{P}^2$ on which the closure of $\{G_\omega=0\}$ intersects the divisor at infinity $X \setminus \pi^{-1}(\mathbb{C}^2)$ in one point $p_X(\omega)$ which is different from $p_X(\omega')$. This rely on a theorem of Maher and Tiozzo \cite[Theorem 1.8]{MT2} showing that the Poisson boundary of $(G, \mu)$ is equal to the boundary of the Bass-Serre tree of $G$ with the hitting measure.

Theorem \ref{thm2} is a direct corollary of Theorem \ref{thm7}: we use Birkhoff's ergodic theorem to show that a stationary measure puts no mass on any $U(p)$. Note that we only need the fact that $f^n_\omega(q)$ goes to infinity when $q \in U(p(\omega))$. Thus, a weaker control on the growth rate of orbits would be sufficient, but leaving the purely loxodromic case, we do not know how to prove such a property.

Theorem \ref{thm3} is a consequence of Theorem \ref{thm7} and Roda's theorem \cite{Roda}. Let $\nu$ be a $\mu$-stationary measure. Theorem \ref{thm2} allows us to apply tools from Pesin theory. Lyapunov exponents of $\nu$ are well-defined and satisfy $\lambda^+ + \lambda^-\geq 0$ by assumption. We only need to consider the hyperbolic case $\lambda^+>0>\lambda^-$ thanks to the invariance principle of Crauel \cite{Cr} and Avila--Viana \cite{Av}. In this case, the stable manifolds $W^s(\omega,q)$ are included in $\{G_\omega=0\}$ and thus they are not non-random by Theorem \ref{thm7}. We apply Roda's theorem \cite{Roda} to derive stiffness. Note that Roda's theorem is written in the framework of compact complex surfaces, but it also applies to compactly supported stationary measures on any complex surface, as long as the stable manifolds depend non-trivially on the itinerary $\omega$.

Note that the proofs rely on the finiteness of the support of $\mu$, but we expect that Theorems \ref{thm2}, \ref{thm3} and \ref{thm4} hold under optimal moment conditions.

\subsection{Related results}
Stiffness property was introduced by Furstenberg \cite{Furstenberg} in the framework of homogeneous dynamics. In a seminal work, Benoist--Quint \cite{BQ} have given a classification of stationary measures in this context. Since then, their ideas have been extended to increasingly more general frameworks: Eskin--Mirzakhani \cite{EM} for $\mathrm{SL}_2(\mathbb{R})$-actions on moduli space of translation surfaces, Brown--Rodriguez Hertz \cite{BRH} for smooth actions on real surfaces, and Brown--Eskin--Filip--Rodriguez Hertz \cite{BEFRH} for smooth actions in any dimension. See the introduction of \cite{CD1} for a more detailed historical account.

Inspired by the work of Brown and Rodriguez Hertz, Cantat--Dujardin \cite{CD1} have studied random dynamics on compact complex surfaces. They establish stiffness on the real part $X(\mathbb{R})$ of a K3 surface $X$ defined over the reals when $\mu$ is a finitely supported measure on the group of automorphisms of $X$ such that $\Gamma_\mu$ does not preserve any curve. Building on the results of Cantat--Dujardin and  Brown--Eskin--Filip--Rodriguez Hertz, Roda \cite{Roda} has extended the stiffness property for groups of automorphisms of complex K3 surfaces that do not preserve any curve, that is, stiffness holds also on $X(\mathbb{C})$. If a $\mu$-stationary measure $\nu$ is hyperbolic, then the stable manifolds $W^s(\omega, q)$ are almost surely complex one-dimensional submanifolds biholomorphic to $\mathbb{C}$. When $\Gamma_\mu$ does not preserve any curve, these stable manifolds depend non-trivially on the itinerary $\omega$. This property is crucial in order to apply stiffness results from \cites{BRH, BEFRH}. Indeed, Cantat and Dujardin have shown that the cohomology classes of positive closed currents associated to the stable manifolds are almost surely different in $H^{1,1}(X)$ using a random walk on a finite dimensional hyperbolic space $\mathbb{H}\subset H^{1,1}(X,\mathbb{R}
)$. Our proof follows the same strategy: we show that stable manifolds are not non-random using the random walk on the Bass-Serre tree to distinguish the $W^s(\omega,q)$ at infinity. It allows us to apply Roda's deep theorem. Cantat--Dujardin \cite{CD2} have obtained further classification results when the group $\Gamma_\mu$ contains parabolic elements. See \cite{CD6} for a survey of their results, examples and open problems. 

In the case of affine surfaces, which are non-compact, Cantat--Dupont--Martin-Baillon \cite{CDMB} have classified stationary measures when $X$ is a Markov surface. This framework is similar to $\mathbb{C}^2$ because one has to deal with the dynamics of $\text{Aut}(X)$ at infinity, but the situation is more rigid for Markov surfaces. Indeed, their automorphism group is a free product of three copies of $\mathbb{Z}/2\mathbb{Z}$, and the indeterminacy points of elements of $\text{Aut}(X)$ are contained in a finite subset of $\partial X$. Thus, the issue that $f^n_\omega$ may contract an exceptional curve to a point which is very close to a point of indeterminacy of $f_{n+1}$ never happens in the context of Markov surfaces. Moreover, compact $\text{Aut}(X)$-invariant subsets were already classified in \cite{CL} whereas there is no classification of compact invariant subsets for a non-elementary subgroup of $G$ apart from some particular cases. See also \cite{Goldman,RR} for the study of the dynamics on Markov surfaces.

In his thesis, Lamy has studied the action of subgroups of $G$ on the Bass-Serre tree, and the dynamics at infinity in $\mathbb{C}^2$ of purely loxodromic subgroups of $G$, see \cite[Section 3.2]{La1}. Notably, \cite[Proposition 3.18]{La1} gives good generating sets for purely loxodromic subgroups and is a key ingredient for our results. 

The compactness of the support of stationary measures has been proved recently by Quintero Santander \cite{QS} when $\Gamma_\mu$ is a ping-pong group, that is, a subgroup of $G$ generated by a Hénon map $f$ and a conjugate of $f$ by a rotation which does not preserve the set $\{[1:0:0],[0:1:0]\}$. The analysis of the dynamics at infinity is simplifed by the fact that the indeterminacy points of the elements of $\Sigma_\mu$ are different. See Section \ref{section2.2}. Actually, we proved the same result independentely, and one goal of this paper is to extend it to the more general case of purely loxodromic groups.

Note also that Bera \cite{Bera1} has constructed Green functions for semigroups of Hénon maps. In this setting, the point $[0:1:0]$ is uniformly attracting for the semigroup and there is no issue of algebraic stability. These Green functions play an important role in her proof with Verma of Bedford's conjecture \cite{Bera2}. 

There is also a connection to the work of Diller and Roeder \cite{DR}, where they prove equidistribution results for iterated preimages of curves under toric rational maps that do not admit any algebraically stable model. Indeed, in our setting, we can show that iterated pullbacks of the Fubini-Study form converge almost surely to the current $T_\omega:=dd^cG_\omega$. We can not directly use the argument of Cantat--Dujardin to distinguish the stable currents in our setting  because $H^{1,1}(\mathbb{P}^2)$ is too small. Thus, we would have to consider the action on the Picard-Manin space, which contains an infinite dimensional hyperbolic space, to distinguish the cohomology classes of the currents $T_\omega$. This implies to work with currents on all models of the projective plane as in \cite{DR}. We do not develop this idea in this paper, as for automorphisms of the plane, the Bass-Serre tree works equally well. It will be the subject of a companion paper, in which we prove equidistribution results for certain random dynamical systems on $\text{Bir}(\mathbb{P}^2)$.

\subsection{Organization of the paper}
We use convergence properties of the random walk on the Bass-Serre tree of $G$ to obtain normal forms for $f^n_\omega$ in Section 2. Then, we define the Green functions $G_\omega$ in Section 3. We introduce dynamics on blow-ups of $\mathbb{P}^2$ and we prove Theorems \ref{thm7} and \ref{thm2} in Section 4. The last section contains the proofs of Theorems \ref{thm3}, \ref{thm4}, \ref{thm5} and further classification results.

\subsection{Acknowledgments}
This work was partially funded by the French National Research Agency under the project DynAtrois (ANR-24-CE40-1163), by the EIPHI Graduate School (contract ANR-17-EURE-0002), and by the Région Bourgogne-Franche-Comté. The author would like to thank Serge Cantat, Romain Dujardin, Christophe Dupont, Florestan Martin-Baillon, Andres Quintero Santander and Roland Roeder for interesting discussions related to this paper, and his Ph.D. advisors Johan Taflin and Michele Triestino for their support. 

\section{Random walk on the Bass-Serre tree}
In this section, we first recall the definition of the Bass-Serre tree of $G$ and we describe Lamy's results \cite{La3} on the classification of subgroups of $G$. Then, we state a theorem of Maher and Tiozzo \cites{MT1, MT2} concerning the asymptotic behaviour of random walks on Gromov hyperbolic spaces. Applying this to the random walk on the Bass-Serre tree, we derive normal forms for random products $f^n_\omega$. We conclude by a finiteness result of Lamy \cite{La1} which gives a good generating set for purely loxodromic subgroups of $G$.

\subsection{The Bass-Serre tree}\label{section2.1}
We denote by $G$ the group of automorphisms of the complex affine plane $\mathbb{A}^2_\mathbb{C}$. We consider the affine group
$$A=\{a(x,y)=(a_1x+b_1y+c_1,a_2 x+b_2 y+ c_2)|\; a_1b_2-a_2b_1 \neq 0\},$$
and the \textbf{elementary} group
$$E=\{e(x,y)=(\alpha x+p(y), \beta y+\delta)|\; \alpha \beta \neq 0, \, p\in \mathbb{C}[y]\}.$$
Denote by $S:=A\cap E$  their intersection.
By Jung's theorem \cite{Jung}, $G$ is the amalgamated product of its two subgroups $A$ and $E$ over $S$. See \cite{La4} for a modern proof. Every element $g \in G$ which is not in $S$ can be written as a product $$g=a_1e_1\dots a_n e_n$$
with $a_i \in A\setminus E$, $e_i\in E \setminus A$, with possibly $a_1=1$ and $e_n=1$. This writing is unique up to the modifications $a_i'=a_is$, $e_i'=s^{-1}e_i$ or $e_i'=e_is$, $a_{i+1}'=s^{-1}a_{i+1}$ for some $s \in S$.

Recall that the birational extension to $\mathbb{P}^2=\mathbb{C}^2\cup L_\infty$ of an elementary map $e \in E\setminus A$ has an indeterminacy point at $I:=[1:0:0]$, and $e$ contracts $L_\infty$ to $I$. An affine map $a \in A \setminus E$ extends to an automorphism of $\mathbb{P}^2$ sending $I$ to a point $a(I) \in L_\infty$ which is different from $I$.

We define a \textbf{Hénon map} by $$h(x,y)=(y,p(y)-ax)$$
where $a \in \mathbb{C} \setminus \{0\}$ and $\deg(p) \geq 2$. Note that we can decompose $h=a e$ as a product of the affine map $a(x,y)=(y,x)$ and the elementary map $e(x,y)=(p(y)-ax,y)$. By a theorem of Friedland and Milnor \cite{FM}, every $g \in G$ is conjugate to an elementary map or to a product of Hénon maps. A Hénon map has an indeterminacy point at $I$ and contracts $L_\infty$ to the super-attracting fixed point $[0:1:0]$.

We now define the Bass-Serre tree of $G$ which is a simplicial tree $\mathcal{T}$ on which $G$ acts faithfully by isometries \cite{Serre, La3}. The set of vertices of $\mathcal{T}$ is $G/A \cup G/E$, and there is an edge between the left classes $gA$ and $g'E$ if and only if there exists $h\in G$ such that $hA=gA$ and $hE=g'E$. Notice that if $h'$ satisfies the same property, then $h'=hs$ for some $s\in S$. Thus, the set of edges of $\mathcal{T}$ is represented by $G/S$. There are no cycles thanks to the amalgamated product property. We first endow $\mathcal{T}$ with the natural distance $d_\mathcal{T}$ of a graph and we then consider the geometric realization of $\mathcal{T}$ where an edge is isometric to $[0,1]$ with the euclidean distance. Thus, $\mathcal{T}$ is path-connected, and there is a unique geodesic between two points. Notice that $\mathcal{T} $ is not locally compact: there is an uncountable set of edges starting from one vertex, see \cite[Section 2]{La3} for a description of representatives of $G/A$ and $G/E$. 

The action by left translations $f(gS):=fgS$ defines a representation of $G$ in the isometry group of $\mathcal{T}$. The action is faithful by a theorem of Lamy \cite[Remarque 3.5]{La3}. An element $g \in G$ is \textbf{elliptic} if $g$ fixes a point in $\mathcal{T}$. Otherwise, $g$ is called \textbf{loxodromic}. In this case, $g$ acts on $\mathcal{T}$ by translation along an infinite geodesic, called the \textbf{axis} of $g$. An automorphism is elliptic precisely when it is conjugate to an elementary map. Thus, for an automorphism $g$, the following are equivalent \cite{FM}, \cite{Smillie}:
\begin{enumerate}
    \item $g$ is loxodromic,
    \item $g$ is conjugate to a product of Hénon maps,
    \item the dynamical degree $ \lambda(g):=\lim (\deg(g^n))^\frac{1}{n}$ of $g$ satisfies $\lambda(g) >1$,
    \item the dynamical degree $\lambda(g)$ of $g$ is in $\mathbb{N}_{\geq 2}$,
    \item $g$ has positive topological entropy.
\end{enumerate}

We say that a subgroup $\Gamma$ of $G$ is \textbf{non-elementary} if it contains two loxodromic elements that generate a free group of rank two and is \textbf{purely loxodromic} if every element of $\Gamma \setminus  \{1\}$ is loxodromic. We have a classification of subgroups of $G$, which is due to Lamy \cite{La3}. See also \cite[Chapter 7]{La2}.

\begin{prop}[Lamy]\label{prop1}
For a finitely generated subgroup $\Gamma \le G$, exactly one of the following holds.
    \begin{enumerate}
    \item $\Gamma$ does not contain any loxodromic element and is conjugate to a subgroup of $A$ or $E$.
    \item $\Gamma$ contains loxodromic elements and they have the same axis. In this case, $\Gamma$ is virtually $\mathbb{Z}$.
    \item $\Gamma$ contains two loxodromic elements with different axis. In this case, $\Gamma$ contains a free group of rank two which is purely loxodromic. 
\end{enumerate}
Moreover, if $f$ and $g$ are loxodromic elements that do not satisfy any relation of the form $f^n=g^m$ with non-zero $n,m \in \mathbb{Z}$, then there exists $k \geq 1$, such that $f^k$ and $g^k$ generate a free group of rank two which is purely loxodromic. 
\end{prop}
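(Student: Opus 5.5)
This is a classification result for groups acting on a simplicial tree by isometries. I would prove it with the standard trichotomy for such actions, combined with two structural facts: $G$ acts faithfully on $\mathcal{T}$, and vertex stabilizers are conjugates of $A$ or $E$. Since $\mathcal{T}$ is a tree and each element of $G$ is either elliptic or loxodromic, I split according to the loxodromic elements of $\Gamma$.

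\emph{Case (1).} Suppose $\Gamma$ has no loxodromic element. Then every element of $\Gamma$ is elliptic. For a finitely generated group acting on a tree with all elements elliptic, there is a global fixed point; this is Serre's lemma. It uses the fact that if $g$, $h$ and $gh$ are all elliptic then $\mathrm{Fix}(g)\cap\mathrm{Fix}(h)\neq\emptyset$, and one then applies Helly-type intersection for subtrees to the generators and their pairwise products. If the fixed point is the midpoint of an edge, then, since the action has no inversions ($A$-type vertices go to $A$-type vertices), $\Gamma$ also fixes an endpoint vertex. The stabilizer of the vertex $gA$ is $gAg^{-1}$ and that of $gE$ is $gEg^{-1}$, so $\Gamma$ is conjugate to a subgroup of $A$ or of $E$.

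\emph{Cases (2) and (3).} Suppose $\Gamma$ contains a loxodromic element $f$ with axis $\ell$.

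If every loxodromic element of $\Gamma$ has axis $\ell$, I first show that $\Gamma$ preserves $\ell$. For $\gamma\in\Gamma$, the element $\gamma f\gamma^{-1}$ is loxodromic with axis $\gamma\ell$, so $\gamma\ell=\ell$. Hence $\Gamma$ acts on the line $\ell$, and the restriction map $\Gamma\to\mathrm{Isom}(\ell)$ has kernel $K$ fixing $\ell$ pointwise. Then $K$ lies in the stabilizer of an edge, which is a conjugate of $S$. To get virtually $\mathbb{Z}$ I need $K$ to be finite. Here I would use Lamy's structural facts: elements fixing a long segment of the tree lie in the intersection of the stabilizers of several consecutive edges, and for a segment of length large enough these intersections are finite. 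Alternatively, $K$ is normalized by $f$, and the conjugates $f^nSf^{-n}$ intersect in a finite group, using degree considerations on the normal form. Granting finiteness of $K$, the image of $\Gamma$ in $\mathrm{Isom}(\ell)$ is discrete and contains a translation, hence is virtually $\mathbb{Z}$, and so is $\Gamma$.

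If there are two loxodromic elements $f,g$ with different axes, I run a ping-pong argument on $\partial\mathcal{T}$. The endpoints of the two axes are either four distinct points, or the axes share one endpoint. The shared-endpoint case would need either an exclusion argument or a commutator trick: $fgf^{-1}g^{-1}$ or a conjugate $hgh^{-1}$ produces an axis with disjoint endpoints. In the disjoint case, for $N$ large the elements $f^N$ and $g^N$ play ping-pong on disjoint neighborhoods of the four endpoints. Hence they generate a free group in which every nontrivial element, being a cyclically reduced word up to conjugacy, translates along a bi-infinite path, and is therefore loxodromic. So the free group is purely loxodromic.

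\emph{The final assertion.} Let $f,g$ be loxodromic with no relation $f^n=g^m$. If $f$ and $g$ had the same axis, then $\langle f,g\rangle$ would be virtually $\mathbb{Z}$ by the previous paragraph, forcing $f^n=g^m$ for some nonzero $n,m$. So their axes differ. The step I expect to be the main obstacle is the possibility of a common endpoint with different axes. To handle it, I would use that the stabilizers of long segments are small, namely finite or contained in a conjugate of $S$. This shows that sharing an endpoint forces the commutator $[f,g]$ to fix a long ray pointwise, which contradicts faithfulness together with the finiteness of segment stabilizers unless $f$ and $g$ are commensurable. Once the four endpoints are distinct, the ping-pong above gives $k$ with $\langle f^k,g^k\rangle$ free and purely loxodromic. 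Finiteness of segment stabilizers (from Lamy's normal-form computations) is the genuinely nontrivial input throughout.
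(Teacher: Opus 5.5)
The paper gives no proof of this proposition and simply quotes Lamy. Your architecture is the tree-theoretic route behind Lamy's theorem: Serre's fixed-point lemma for (1), restriction to the invariant axis with finite kernel for (2), and ping-pong on $\partial\mathcal{T}$ for (3) and the last assertion, all resting on finiteness of pointwise stabilizers of long segments. That finiteness input does hold. Normalize a path so that its letters are $(tx+y,x)$ and $(x+q(y),y)$. An element $(\alpha x+\beta y+\gamma,\delta y+\phi)\in S$ fixing it must satisfy $\alpha=\delta^{m}$, and then $\delta^{mm'-1}=1$ after two elementary letters of degrees $m,m'$. A few more letters pin down the translation part.

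The step that fails as written is the exclusion of two distinct axes sharing an endpoint $\xi$, which is exactly what (3) and the final assertion require. Your commutator/conjugation trick cannot work inside $\langle f,g\rangle$, and $\Gamma$ may equal $\langle f,g\rangle$. Every element of $\langle f,g\rangle$ fixes $\xi$. The Busemann character at $\xi$ is then a homomorphism $\langle f,g\rangle\to\mathbb{Z}$ whose kernel consists of elliptic elements. So every rank-two free subgroup of $\langle f,g\rangle$ contains nontrivial elliptic commutators, and no such subgroup is purely loxodromic. The configuration must therefore be ruled out, and your argument does not do this. Knowing that $[f,g]$ fixes a ray pointwise only puts $[f,g]$ in a finite group, i.e.\ says it has finite order. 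That contradicts neither faithfulness nor anything else, and it does not force commensurability.

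What is missing is an infinite family of elements lying in one \emph{fixed} finite stabilizer. Suppose $\xi$ is the attracting end of $f$ (else replace $f$ by $f^{-1}$) and $\mathrm{axis}(f)\cap\mathrm{axis}(g)=[p,\xi)$. For $n\geq 0$, set $g_n:=f^{-n}gf^{n}$. Its axis is $f^{-n}(\mathrm{axis}(g))$, which contains $[f^{-n}p,\xi)\supset[p,\xi)$, and $g_n$ has the same translation length as $g$ and the same direction relative to $\xi$. Hence $g_n^{-1}g$ fixes pointwise the ray $[p',\xi)$, where $p'\in[p,\xi)$ is at distance $\tau_g$ from $p$, independently of $n$.

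This ray has finite pointwise stabilizer, so $g_n=g_m$ for some $n>m$, i.e.\ $g$ commutes with $f^{n-m}$. Then $g$ preserves $\mathrm{axis}(f)$ and, being loxodromic, translates along it. So $\mathrm{axis}(g)=\mathrm{axis}(f)$, a contradiction. With this lemma, distinct axes always have four distinct endpoints, and your ping-pong argument then yields both (3) and the final assertion.
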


We say that a measure $\mu$ is non-elementary (\textit{resp.\,}purely loxodromic) if the subgroup $\Gamma_\mu$ generated by the support of $\mu$ is non-elementary (\textit{resp.\,}purely loxodromic). We see that the assumptions in Theorem \ref{thm1} are not too restrictive: if $f$ and $g$ are loxodromic, then, unless they share common powers, there exists $k\geq 1$ such that a measure supported on $f^k, g^k $ and their inverses is non-elementary and purely loxodromic.

\subsection{Genericity and typical examples}\label{section2.2}
By \cite[Corollary 1.6]{BCW}, the subset $G_d \subset G$ of elements of $G$ that have degree at most $d$ is an open subset of an affine algebraic variety. If $w(f,g)$ denotes a reduced word in $(f,g)\in G_d \times G_d$, then the conditions $w(f,g)=\mathrm{id}$ define a countable union of proper subvarieties. Moreover, the conditions describing when $w(f,g) $ is elliptic are also given by polynomial equations in the coefficients of $f$ and $g$ by Furter's criterion \cite{Furter}. We obtain the following: if $f$ and $g$ are outside a countable union of hypersurfaces of $G_d \times G_d$, then they generate a free, purely loxodromic subgroup of $G$.    

A typical family of purely loxodromic and non-elementary measures can be constructed as follows: consider two loxodromic automorphisms $f$ and $g$ such that $$\# \{I_f, I_{f^{-1}}, I_g, I_{g^{-1}} \}=4,$$
where $I_\gamma$ denotes the indeterminacy point of $\gamma$ on $\mathbb{P}^2$. For instance, one can take $f$ to be a Hénon map and $g$ to be the conjugate of $f$ by an affine map $a$ satisfying $$\{a(I_{f}), a(I_{f^{-1}})\} \cap \{I_f, I_{f^{-1}} \} = \emptyset.$$  
Then, the measure $$\mu:=\frac{1}{4}(\delta_f+\delta_{f^{-1}}+\delta_g+\delta_{g^{-1}})
$$ satisfies the assumptions of Theorem \ref{thm1}. Indeed, one can use a ping-pong argument at infinity in $\mathbb{P}^2$ to show that $\Gamma_\mu$ is a free group which is purely loxodromic. We refer to the work of Quintero Santander \cite{QS} for a detailed description of the random dynamics in this setting.

\subsection{Random walks on hyperbolic spaces} Random walks on locally compact trees are well understood, and under reasonable assumptions on the law, we obtain classical properties such as transience, convergence to the boundary at linear speed and identification of the Poisson boundary. Here, we cannot use these results as we are working with the Bass-Serre tree which is not locally compact. We state a theorem of Maher and Tiozzo which combines results from \cites{MT1, MT2} that concerns random walks on non-proper Gromov hyperbolic spaces. 

Let $(X,d)$ be a Gromov hyperbolic space, and consider a finitely supported probability measure $\mu$ on a subgroup $\Gamma $ of the isometry group of $X$. We denote by $\Sigma_\mu$ the support of $\mu$ and we set $\Omega:=\Sigma_\mu^\mathbb{N}$. Let $x_0 \in X$ be any base point. For $\omega=(f_n)_{n\geq 0},$ we define the right random walk on $X$ based on $x_0$ by $r^0_\omega x_0:=x_0$ and $$ r^n_\omega x_0:=f_0 \cdots f_{n-1}(x_0)$$
when $n\geq 1$. 
Isometries of $X$ are classified as elliptic, parabolic or loxodromic \cite{DSU}. Loxodromic elements act with two fixed points on $\partial X$, one attracting, one repelling. We say that $\mu$ is non-elementary if the semigroup generated by the support of $\mu$ contains two loxodromic elements with disjoint fixed point sets in $\partial X$. We say that a loxodromic element $f \in \Gamma$ is weakly properly discontinuous (WPD) if for any $x \in X$ and $K\geq 0$, there exists $N\geq 1$, such that the set
$$\{ \gamma \in \Gamma \mid \, d(x, \gamma (x)) \leq K \, , \, d(f^N(x), \gamma f^N(x)) \leq K \}  $$
is finite. The idea is that $f$ is WPD if the action of $\Gamma$ along the axis of $f$ is properly discontinuous. This weak form of properness is needed in order to identify the Poisson boundary of the walk for groups acting on non-proper Gromov-hyperbolic spaces. We say that $\mu$ is WPD if the semigroup generated by $\Sigma_\mu$ contains a WPD element.

\begin{thm}[Maher--Tiozzo]\label{prop3}
    Let $\mu$ be a finitely supported, non-elementary probability measure on a subgroup $\Gamma$ of isometries of a Gromov hyperbolic space $(X,d)$, and consider a base point $x_0 \in X$.
    \begin{enumerate}
        \item For $\mu^\mathbb{N}$-almost every $\omega$, there exists a point $\xi (\omega) \in \partial X$ such that $$r^n_\omega x_0 \longrightarrow \xi(\omega) $$ as $n \to +\infty.$
        \item There exists $l>0$, such that for $\mu^\mathbb{N}$-almost every $\omega$, $$\lim_{n \rightarrow +\infty} \frac{1}{n}d(r^n_\omega x_0 ,x_0) =l.$$
        \item We endow the Gromov boundary $\partial X$ with the hitting measure $\eta $ defined by 
        $$\eta(B):=\mu^\mathbb{N}(\omega, \, \xi(\omega) \in B)$$
        for any Borel subset $B \subset \partial X$. The hitting measure is non-atomic, and if we assume moreover that $\mu$ is WPD, then $(\partial X, \eta)$ is a model for the Poisson boundary of the right random walk $(\Gamma,\mu)$, that is, the map $$\phi \mapsto \left( \gamma  \mapsto\int_{\partial X } \phi(\gamma (b)) \, d\eta(b)   \right) $$ 
        defines an isomorphism between $L^\infty(\partial X, \eta)$ and the space $H^\infty(\Gamma,\mu)$ of bounded $\mu$-harmonic functions.
    \end{enumerate}
\end{thm}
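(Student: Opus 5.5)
This statement is a theorem of Maher and Tiozzo, quoted from \cites{MT1, MT2}, so within the paper it is invoked rather than reproved. If I were to prove it, I would treat the three items in order, using only the Gromov hyperbolicity of $(X,d)$ and the non-elementarity of $\mu$, and with no local compactness anywhere.

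\emph{Convergence to the boundary.} Non-elementarity gives two loxodromics with disjoint fixed point sets. By a ping-pong argument, the semigroup generated by $\Sigma_\mu$ then contains a free subsemigroup of independent loxodromics, and its limit set in $\partial X$ is not reduced to a finite set. I would first show that the hitting (harmonic) measures have no atoms. Fix a $\mu$-stationary measure $\eta$ on $\partial X$, or on the horofunction compactification, which is compact even when $X$ is not proper. A maximal-atom argument shows that an atom of $\eta$ would produce a finite $\Gamma_\mu$-invariant set of boundary points, which non-elementarity excludes. Next I would use the martingale convergence theorem: the measures $r^n_\omega{}_*\eta$ converge almost surely. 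Combined with the shadow geometry of hyperbolic spaces, this forces them to converge to a Dirac mass $\delta_{\xi(\omega)}$. Then $r^n_\omega x_0\to\xi(\omega)$, because shadows containing most of the $\eta$-mass must shrink toward the moving point $r^n_\omega x_0$.

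\emph{Positive drift.} The sequence $d(x_0,r^n_\omega x_0)$ is subadditive, so Kingman's theorem gives an almost sure limit $l\ge 0$. To get $l>0$ I would express $l$ through the horofunction cocycle, $l=\int\!\!\int \beta_\xi(x_0,g x_0)\,d\mu(g)\,d\eta(\xi)$. Convergence to a boundary point together with non-atomicity then shows that the Gromov products $(x_0\cdot r^n_\omega x_0)_{r^m_\omega x_0}$ stay bounded along a positive density of times. This forces linear progress.

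\emph{Poisson boundary.} With $\mu$ finitely supported, so that it has finite entropy and finite first moment, I would apply Kaimanovich's strip criterion. For almost every pair $(\xi_-,\xi_+)$ of backward and forward limit points, define a strip $S(\xi_-,\xi_+)$ as the set of group elements moving $x_0$ within bounded distance of a quasi-geodesic joining $\xi_-$ to $\xi_+$. The criterion requires that $|S(\xi_-,\xi_+)\cap \{\gamma: d(x_0,\gamma x_0)\le n\}|$ grow subexponentially.

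The main obstacle is this last step. Because the space is not proper, strips may contain infinitely many group elements in a bounded region. The WPD hypothesis handles this: typical sample paths fellow-travel translates of the axis of a WPD element with positive frequency, and WPD bounds the number of group elements that nearly fix long subsegments of such an axis. This yields the polynomial growth the criterion needs, and Kaimanovich's theorem then gives the isomorphism $L^\infty(\partial X,\eta)\cong H^\infty(\Gamma,\mu)$.
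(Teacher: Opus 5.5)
Your opening sentence is exactly what the paper does: its proof consists only of the citations \cite[Theorem 1.1]{MT1}, \cite[Theorem 1.2]{MT1} and \cite[Theorem 1.8]{MT2}, and nothing more is needed. The from-scratch outline you add goes beyond the paper, and as a proof it has two genuine gaps.

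\emph{Positivity of the drift.} You argue that $r^m_\omega x_0$ stays within bounded distance of the geodesic towards $\xi(\omega)$ for a positive density of times $m$. That says nothing about how far along the geodesic these points are. They can keep returning to the same region, so it is compatible with $d(x_0,r^n_\omega x_0)=o(n)$. Linear progress needs a definite gain of distance at a positive proportion of times, which \cite{MT1} get from their shadow estimates.

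Your cocycle route can be completed, but not as written. Take horofunctions $h$ with $h(x_0)=0$, and let $\gamma\cdot h=h\circ\gamma^{-1}-h(\gamma^{-1}x_0)$. Then $h(r^n_\omega x_0)=\sum_{k<n}\big((r^k_\omega)^{-1}\cdot h\big)(f_kx_0)$. For these increments to be stationary, $h$ must be drawn from a $\check\mu$-stationary measure, not from the $\mu$-stationary $\eta$ of the statement (unless $\mu$ is symmetric). These sums tend to $+\infty$, because $r^n_\omega x_0\to\xi(\omega)$, which almost surely differs from the boundary point of $h$. Kesten's lemma, an Atkinson-type recurrence argument, then forces their asymptotic mean to be positive. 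That mean is at most $l$, since $h$ is $1$-Lipschitz, so $l>0$.

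\emph{The strip.} As you define $S(\xi_-,\xi_+)$, it contains every $\gamma$ with $\gamma x_0$ in a fixed ball around a point of the geodesic. In particular it contains whole cosets $\gamma\,\mathrm{Stab}_\Gamma(x_0)$. For a non-proper action these sets can be infinite. WPD does not forbid infinite point stabilizers; it only controls elements that almost fix two far-apart points $x$ and $f^Nx$. So $S(\xi_-,\xi_+)\cap\{\gamma : d(x_0,\gamma x_0)\le n\}$ can be infinite, and Kaimanovich's criterion fails for these strips.

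The fix is to build the WPD element $f$ (taken in the semigroup generated by $\Sigma_\mu$) into the strip. Take those $\gamma$ that send the segment $[x_0,f^Nx_0]$, with $N$ large, close to a subsegment of the geodesic from $\xi_-$ to $\xi_+$. WPD then bounds the number of such $\gamma$ per position, which gives linear growth. The positive-frequency fellow-travelling you mention is what makes these strips nonempty for almost every pair. That construction is the actual content of \cite{MT2}, and your outline asserts it rather than proves it.

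In the paper's application the issue disappears. By Proposition \ref{prop2}, a purely loxodromic $\Gamma_\mu$ has only finitely many elements of bounded displacement on the Bass--Serre tree. So even your naive strips would have linear growth there.
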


\begin{proof}
    The first part comes from \cite[Theorem 1.1]{MT1}, and the second from \cite[Theorem 1.2]{MT1}. 
    See \cite[Theorem 1.8]{MT2} for the third part.  
\end{proof}

As the Bass-Serre tree of $G$ is a $0$-hyperbolic space, with an action of $G$ by isometries, we can apply Maher-Tiozzo's theorem to our setting. Remark that in this case the definition of a non-elementary measure $\mu$ coincides with the definition given in Section \ref{section2.1}, that is, the subgroup generated by the support of $\mu$ contains two loxodromic elements with disjoint fixed point sets on $\partial \mathcal{T}$. Indeed, if the semigroup generated by the support of $\mu$ fixes a point on the Bass-Serre tree, or preserves a set of two points in $\partial \mathcal{T}$, then the subgroup generated by the support of $\mu$ also fixes a point of $\mathcal{T}$ or preserves a set of two points in $\partial \mathcal{T}$. 

\begin{lem}\label{lemme2}
    Any loxodromic element in $G$ is WPD.
\end{lem}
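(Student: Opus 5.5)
The plan is to reduce the WPD property to a finiteness statement about pointwise stabilizers of long segments of the axis of $f$ in $\mathcal{T}$, and to prove that statement by an explicit computation with Hénon maps. Since conjugation by an element of $G$ is an isometry of $\mathcal{T}$ sending axes to axes, and the WPD condition is conjugation invariant (replacing $x$ by its image), the theorem of Friedland and Milnor lets us assume $f=h_k\cdots h_1$ is a product of Hénon maps $h_i=a e_i$. Here $a(x,y)=(y,x)$ and $e_i(x,y)=(p_i(y)-a_ix,y)$. The axis of $f$ then passes through the vertices
\[
E,\ A,\ e_1^{-1}E,\ e_1^{-1}a^{-1}A,\ \dots
\]
and their translates, that is, through the cosets of the partial products of the normal form of $f$ written periodically. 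Moreover $f$ translates the axis by $2k$. Since the condition for a given point $x$ follows from the condition for a point on the axis by the triangle inequality (enlarging $K$), we may take $x$ on the axis.

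\emph{Reduction in the tree.} Fix $K$ and let $\gamma$ satisfy $d(x,\gamma x)\le K$ and $d(f^Nx,\gamma f^Nx)\le K$ with $N$ large. Since $\mathcal{T}$ is a tree, the geodesics $[x,f^Nx]$ and $[\gamma x,\gamma f^Nx]=\gamma[x,f^Nx]$ share a central subsegment $\sigma$ of length at least $2kN-2K$. On $\sigma$, $\gamma$ maps a sub-subsegment $\sigma'$ (of length at least $2kN-4K$) onto a subsegment of the axis shifted by some $t$ with $|t|\le 2K$. Orientation is preserved, because $\gamma x$ is near $x$ and $\gamma f^Nx$ is near $f^Nx$. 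Hence the restriction of $\gamma$ to $\sigma'$ takes one of finitely many values. If $\gamma,\gamma'$ have the same restriction, then $\gamma^{-1}\gamma'$ fixes $\sigma'$ pointwise. So the WPD set is contained in finitely many left cosets of the pointwise stabilizer of a segment of the axis of length $\ge 2kN-4K$. It therefore suffices to show the following claim: \emph{the pointwise stabilizer of some segment of the axis of $f$ of bounded length (say covering two consecutive Hénon factors) is finite.} Taking $N$ large then ensures every such $\sigma'$ contains a translate of that segment, whose stabilizer is a conjugate of a finite group.

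\emph{The stabilizer computation.} Stabilizers of vertices $gA$ and $gE$ are $gAg^{-1}$ and $gEg^{-1}$, so the stabilizer of the path $E, A, e_1^{-1}E$ is
\[
E\cap A\cap e_1^{-1}Ee_1=S\cap e_1^{-1}Ee_1,
\]
and the next vertex adds conjugation by $a$, and so on. Working this out, an element of the stabilizer of the segment through $E,A,aE,\dots$ must be an affine map preserving both the $x$- and $y$-fibrations. Such a map is diagonal up to translation, $s(x,y)=(\alpha x+c,\beta y+\delta)$. The condition that $e_1 s e_1^{-1}$, composed with $a$, again lies in $S$ forces the functional equations
\[
p_1(\beta y+\delta)=\alpha' p_1(y)+\mathrm{const},
\]
and similarly for the next factor, linking $(\alpha,\beta,c,\delta)$ to the coefficients at the next vertex. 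Since $\deg p_i\ge 2$, comparing the top two coefficients shows that $\beta$ is a root of unity of order dividing $\deg p_1-1$ (up to the normalization by $a_1$), and that $\delta$ is determined by $\beta$. Passing through the swap $a$ exchanges the roles of $x$ and $y$, so the next Hénon factor $h_2$ then pins down $\alpha$ and $c$ in the same way. This leaves finitely many possibilities.

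The main obstacle is this last computation: keeping careful track of the normal forms along the axis, including the ambiguity by $S$, and checking that two consecutive nonlinear factors really cut the stabilizer down to a finite group rather than a one-parameter torus. If this becomes messy, I would instead invoke Lamy's results in \cite{La3} on stabilizers of paths of length at least three in $\mathcal{T}$, which are precisely of this type.
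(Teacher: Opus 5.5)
The paper does not prove this lemma; it only quotes \cite[Proposition 5.4]{LL}. Your proposal is therefore a genuinely different route: a self-contained proof. Its overall architecture is sound, but the key stabilizer computation is mis-sketched (corrected below). The valid parts are these. WPD is conjugation invariant. Friedland--Milnor puts $f=ae_k\cdots ae_1$ in cyclically reduced form, with $E,A$ on the axis and translation length $2k$. The tree argument reducing WPD to finiteness of the pointwise stabilizer of one bounded axis segment is correct and does not need local finiteness. One point should be made explicit. Take $x$ to be a vertex, and let $\sigma'$ be the set of points of $[x,f^Nx]$ at distance at least $2K$ from both ends. This $\sigma'$ does not depend on $\gamma$, and every admissible $\gamma$ acts on it as a shift by some $t\in\mathbb{Z}$ with $|t|\le K$. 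So the WPD set lies in at most $2K+1$ left cosets of $\mathrm{Fix}(\sigma')$. Compared with the citation, your route is elementary and gives explicit bounds.

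The stabilizer computation, as you sketch it, is wrong in its details. First, the path $E,A,e_1^{-1}E$ is degenerate, since $e_1^{-1}E=E$ and hence $S\cap e_1^{-1}Ee_1=S$. Going forward from $E$, the axis reads $E,A,aE,ae_kA,ae_kaE,ae_kae_{k-1}A,ae_kae_{k-1}aE,\dots$, with indices mod $k$.

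Second, and more importantly, one H\'enon factor does \emph{not} force a root of unity. In your own equation, the top coefficient only gives $\alpha'=\beta^{d_1}$, which is a relation between the two diagonal coefficients. Concretely, take $g=(\alpha x+c,\beta y+\delta)\in S\cap aSa^{-1}$, and $e_k(x,y)=(p_k(y)-\lambda_k x,y)$ with $p_k=uy^{d_k}+vy^{d_k-1}+\cdots$. Then
\[
e_k^{-1}a^{-1}g\,a\,e_k=\bigl(\beta x+\lambda_k^{-1}(p_k(\alpha y+c)-\beta p_k(y)-\delta),\ \alpha y+c\bigr).
\]
So $g$ also fixes $ae_kA$ and $ae_kaE$ if and only if $p_k(\alpha y+c)-\beta p_k(y)$ is constant. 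This gives $\beta=\alpha^{d_k}$ and $c=v(\alpha-1)/(d_ku)$, and nothing more. For example, if $p_k=y^{d_k}$, every map $(\alpha x,\alpha^{d_k}y+\delta)$ fixes this four-edge segment. So the torus you worried about really does survive one factor. For the same reason, no statement about paths of length three in \cite{La3} can yield finiteness.

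What closes the argument is that the conjugate $(\beta x+c_1,\alpha y+c)$ is again diagonal, with the two coefficients exchanged. The next factor $e_{k-1}$ therefore imposes $\alpha=\beta^{d_{k-1}}$, and also fixes $c_1$ in terms of $\beta$. Hence $\alpha^{d_kd_{k-1}-1}=1$. Then $\beta$, $c$ and $c_1$ are determined, and so is $\delta=r-\lambda_kc_1$, where $r$ is the constant value above. Thus the pointwise stabilizer of the six-edge segment from $E$ to $ae_kae_{k-1}aE$ has at most $d_kd_{k-1}-1$ elements. This is exactly the input your reduction needs, and with this correction your proof is complete.
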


\begin{proof}
See \cite[Proposition 5.4]{LL}.    
\end{proof}

\subsection{Normal forms} Consider a finitely supported, non-elementary probability measure $\mu$ on $G$. The following proposition asserts that the first $k$ elements that appear in the decomposition of $f^n_\omega$ as a product of affine and elementary maps may be chosen to be constant when $n $ is sufficiently large.

\begin{prop}\label{prop16}
    Consider a finitely supported, non-elementary, probability measure $\mu$  on $G$. For $\mu^\mathbb{N}$-almost every $\omega \in \Omega$, there exist sequences
    \begin{itemize}
        \item $t_\omega=(t_k)_{k \geq 1} \in \mathbb{N}^\mathbb{N}$, with $t_{k+1} >t_k$ for any $k$,
        \item $a_\omega=(a_k)_{k \geq 1} \in (A\setminus E)^\mathbb{N}$,
        \item  $ e_\omega=(e_k)_{k\geq 1}\in (E\setminus A)^\mathbb{N}$,
    \end{itemize}
    such that: for every $k\geq 1$ and $n\geq t_k$, we can write $$f^n_\omega=a_{l(n),n}e_{l(n),n}\cdots a_{1,n}e_{1,n}a_ke_k \cdots a_1e_1 $$
    with $l(n)\geq 0$ and $a_{i,n} \in A\setminus E, e_{i,n}\in E\setminus A$. This writing may start with an affine and end with an elementary.
\end{prop}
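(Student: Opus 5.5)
The idea is to turn the stabilization of the rightmost factors of $f^n_\omega$ into the convergence of a right random walk on $\mathcal{T}$, and then use Theorem \ref{prop3}.

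\textbf{Step 1: pass to inverses.} Since $f^n_\omega=f_{n-1}\cdots f_0$, its inverse is $(f^n_\omega)^{-1}=f_0^{-1}\cdots f_{n-1}^{-1}$. This is exactly the right random walk $r^n_{\check\omega}$ for the reflected measure $\check\mu(f):=\mu(f^{-1})$, evaluated on $\check\omega=(f_n^{-1})_{n\ge 0}$. The measure $\check\mu$ is finitely supported and generates the same group $\Gamma_\mu$, so it is again non-elementary in the sense of Theorem \ref{prop3}, by the remark following that theorem. Fix the base point $x_0=E\in\mathcal{T}$. By Theorem \ref{prop3}(1), for $\mu^\mathbb{N}$-almost every $\omega$ the points $(f^n_\omega)^{-1}E$ converge to some $\xi\in\partial\mathcal{T}$. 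By Theorem \ref{prop3}(2), $d_\mathcal{T}((f^n_\omega)^{-1}E,E)\to\infty$.

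\textbf{Step 2: read normal forms from geodesics.} Write $g:=f^n_\omega=\cdots a_2e_2a_1e_1$, where $e_1$ may be trivial. Then $g^{-1}=e_1^{-1}a_1^{-1}e_2^{-1}a_2^{-1}\cdots$, and the geodesic from $E$ to $g^{-1}E$ (or to $g^{-1}A$) has consecutive vertices
$$E,\ e_1^{-1}A,\ e_1^{-1}a_1^{-1}E,\ e_1^{-1}a_1^{-1}e_2^{-1}A,\dots,$$
with the obvious modification if $e_1=1$. This path has no backtracking because $a_i\notin E$ and $e_i\notin A$; this is the standard Bass--Serre correspondence coming from uniqueness of reduced words. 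Conversely, the vertices of this geodesic determine the partial products $h_j=e_1^{-1}a_1^{-1}\cdots$ up to right multiplication by elements of $S$.

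\textbf{Step 3: use convergence in the tree.} In a tree, $g_n^{-1}E\to\xi$ means that the Gromov products $(g_n^{-1}E\,|\,\xi)_E$ tend to $\infty$. Equivalently, the geodesics $[E,g_n^{-1}E]$ share an initial segment with the ray $[E,\xi)$ whose length tends to infinity. Hence, for each $k$, there is $t_k$ such that for all $n\ge t_k$ the first $2k+1$ vertices of $[E,(f^n_\omega)^{-1}E]$ coincide with those of $[E,\xi)$. We may take $t_k$ strictly increasing.

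Now fix, once and for all, representatives $e_1,a_1,e_2,\dots$ along the ray $[E,\xi)$. These give cosets $e_1^{-1}A=h_1A$, $h_1a_1^{-1}E=h_2E$, and so on, chosen inductively with $e_k\in E\setminus A$ and $a_k\in A\setminus E$. For $n\ge t_k$, the normal form of $(f^n_\omega)^{-1}$ then has prefix partial products equal to $h_j s_j$ with $s_j\in S$ for $j\le 2k$. Using the allowed modifications $a_i'=a_is$, $e_i'=s^{-1}e_i$, and so on, we push the $S$-ambiguity to the right, into the $(2k+1)$-th factor. This factor depends on $n$ and is absorbed into $e_{1,n}$ or $a_{1,n}$, which keeps it in $E\setminus A$ or $A\setminus E$ respectively. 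Inverting back gives
$$f^n_\omega=a_{l(n),n}e_{l(n),n}\cdots a_{1,n}e_{1,n}\,a_ke_k\cdots a_1e_1.$$
This form may start with an affine factor and end with an elementary one. The case $e_1=1$ is handled by re-indexing the sequences once.

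\textbf{Main obstacle.} The main difficulty is the bookkeeping in Step 3: choosing fixed representatives so that the $S$-ambiguity never spoils the fixed prefix, and checking that the leftover $S$-element, once absorbed, leaves the new leading factor in the correct coset complement. A secondary point is checking the correspondence between reduced words and non-backtracking paths from $E$, including the degenerate endpoints. That correspondence is standard from the amalgamated product structure.
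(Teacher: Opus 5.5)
Your proposal is correct and follows the paper's proof. Like the paper, you apply Maher--Tiozzo's convergence theorem to the right walk of the reflected measure $\check\mu$, read the stabilizing prefix of the normal form off the common initial segment with the limiting ray in the Bass--Serre tree, and invert back. Your Step~3 just makes explicit the $S$-ambiguity bookkeeping that the paper leaves implicit.
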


\begin{proof}
     We define the reflected measure $\check \mu$ by $\check\mu(g):=\mu(g^{-1})$. Note that $\check \mu$ is also non-elementary. Applying Maher--Tiozzo's theorem to the right random walk on $\mathcal{T}$ of law $\check \mu$ based on (any point of) $S$ yields the following: for $\check\mu ^\mathbb{N}$-almost every $\omega=(f_n)_{n \geq 0}$, there exist sequences
\begin{itemize}
    \item $(t_k)_{k \geq 1} \in \mathbb{N}^\mathbb{N}$ with $ t_{k+1}>t_{k}$ for any $k$,
    \item $(a_k)_{k \geq 1} \in (A \setminus E)^\mathbb{N}$,
    \item $(e_k)_{k \geq 1} \in (E \setminus A)^\mathbb{N}$,
\end{itemize}
such that, for every $k \geq 1$, and $n \geq t_k$, we can write $$r^n_\omega =a_1 e_1 \cdots a_k e_k a_{1,n} e_{1,n} \cdots a_{l(n),n} e_{l(n),n}$$
with $l(n) \geq 0$ and $a_{i,n} \in A \setminus E, e_{i,n} \in E \setminus A$. This writing may start with an elementary map and end with an affine map. It suffices to take inverses to obtain the proposition.
\end{proof}

Remark that the time $t_k$ is the time when the walk definitely leaves the ball of radius $2k$ centered on the edge $S$. Moreover, $2l(n)$ represents the distance of the walk to the geodesic ray $\xi(\omega)=(S,a_1S, a_1e_1S, \dots)$. 
We have the following instance of algebraic stability.

\begin{lem}\label{lemme1}
    Consider the normal form given by Proposition \ref{prop16}. Then, we have $$\deg(f^n_\omega)=\prod_{j=1}^{l(n)} \deg(e_{j,n}) \prod_{i=1}^k \deg(e_i).$$
\end{lem}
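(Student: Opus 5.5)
The identity is the classical degree formula for reduced words in the amalgamated product $G=A*_S E$, applied to the reduced word for $f^n_\omega$ that Proposition \ref{prop16} produces. The plan is to prove the general statement first: if $g=a_m e_m\cdots a_1 e_1$ is a reduced word with $a_i\in A\setminus E$ and $e_i\in E\setminus A$ (with $a_m$ possibly $1$ and $e_1$ possibly $1$), then $\deg(g)=\prod_i \deg(e_i)$. The normal form
$$f^n_\omega=a_{l(n),n}e_{l(n),n}\cdots a_{1,n}e_{1,n}a_ke_k\cdots a_1e_1$$
is exactly such a reduced word, since consecutive letters alternate between $A\setminus E$ and $E\setminus A$. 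Applying the general statement to it gives the claimed product. Affine letters have degree $1$, so only the elementary letters contribute.

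To prove the general statement, argue by induction on the length, using the geometry at infinity recalled in Section \ref{section2.1}. Each $e\in E\setminus A$ satisfies $\deg(e)\geq 2$. It has its unique indeterminacy point at $I=[1:0:0]$ and contracts $L_\infty$ to $I$. Each $a\in A\setminus E$ is an automorphism of $\mathbb{P}^2$ with $a(I)\neq I$.

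Set $g_j:=a_je_j\cdots a_1e_1$. The induction hypothesis has two parts: $\deg(g_j)=\prod_{i\leq j}\deg(e_i)$, and $g_j$ contracts $L_\infty$ to the point $a_j(I)\neq I$.

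For the inductive step, write $g_{j+1}=a_{j+1}e_{j+1}g_j$. The composition $e_{j+1}\circ g_j$ has degree $\deg(e_{j+1})\deg(g_j)$ exactly when no curve is sent by $g_j$ into the indeterminacy locus of $e_{j+1}$. The only curve $g_j$ can send there is $L_\infty$, which goes to $a_j(I)\neq I$. This is the standard criterion (Sibony / Fornaess–Sibony) that degrees multiply when no curve is contracted into an indeterminacy point. Concretely, write $g_j=(P,Q)$ with top-degree homogeneous parts $(P_d,Q_d)$, where $d=\deg(g_j)$. Then $[P_d:Q_d:0]=a_j(I)$, so $Q_d\neq 0$. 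Now $e_{j+1}(x,y)=(\alpha x+p(y),\beta y+\delta)$ with $\deg p\geq 2$, so the top-degree part of $p(Q)$ is $c\,Q_d^{\deg p}\neq 0$. It cannot cancel against the lower-degree term $\alpha P$. Hence
$$\deg(e_{j+1}g_j)=\deg(p)\,d=\deg(e_{j+1})\deg(g_j).$$
Next, $e_{j+1}g_j$ contracts $L_\infty$ to $I$. Composing with the linear automorphism $a_{j+1}$ preserves the degree and moves the contracted image to $a_{j+1}(I)\neq I$, which closes the induction.

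The edge cases need only brief checks. A leading $e_1$ gives the base case. If the word starts with an affine letter, or ends with $a_{l(n),n}$ or $e_1$ equal to $1$, the argument is unchanged, because affine letters affect neither the degree nor whether $Q_d\neq 0$. The one point requiring care is to state precisely that the image of $L_\infty$ avoids $I$ at each stage, because the hypothesis $a_i\notin E$ is used only there.
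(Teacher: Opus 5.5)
Your proof is correct and follows the paper's argument. Both proceed by induction along the reduced word: each elementary letter contracts $L_\infty$ to its own indeterminacy point $I$, and each letter of $A\setminus E$ moves $I$, so no curve is ever contracted into an indeterminacy point and the degrees multiply. Your explicit top-degree computation ($Q_d\neq 0$, so $p(Q)$ dominates $\alpha P$) and your check of the end cases simply spell out the step that the paper condenses into ``we see inductively''.
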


\begin{proof}
    We can assume that the normal form of $f^n_\omega$ starts with an elementary map. Recall that an elementary map $e \in E \setminus A$ has indeterminacy point $I:=[1:0:0]$ and contracts $L_\infty$ to $I$. Recall also that an affine map $a \in A \setminus E$ does not fix $I$. Thus, $a_1e_1$ contracts $L_\infty $ to a point which is not a point of indeterminacy for $a_2e_2$. We see inductively that the degree of the product of the maps $a_i e_i$ is equal to the product of their degrees.
\end{proof}

We would like to use these normal forms to define Green functions, but we do not know how to get a sufficient control on the dynamics at infinity in this general setting.

\subsection{A finiteness result} We will improve Proposition \ref{prop16} in the purely loxodromic case thanks to the following proposition, due to Lamy \cite{La1}.

\begin{prop}\label{prop2}
    Let $\Gamma$ be a finitely generated, purely loxodromic subgroup of $G$. There exist finite families $\mathcal{A} \subset A \setminus E$ and $\mathcal{E}\subset E \setminus A$, such that every $g \in \Gamma \setminus \{1\}$ can be written as a product
    $$g=a_n e_n \cdots a_1 e_1$$
    with $a_i \in\mathcal A$, $e_i \in \mathcal{E} $ and this writing may start with an affine map and end with an elementary map. Moreover, we can choose $\mathcal{A}$ and $\mathcal{E}$ symmetric.
\end{prop}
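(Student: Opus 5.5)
The plan is to use the action of $\Gamma$ on the Bass--Serre tree $\mathcal{T}$ and the structure of a group acting freely, or at least without elliptic elements, on a tree.

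Since $\Gamma$ is purely loxodromic, no nontrivial element fixes a point of $\mathcal{T}$. In particular, the stabilizers $\Gamma\cap gAg^{-1}$ and $\Gamma\cap gEg^{-1}$ of vertices are trivial, so $\Gamma$ acts freely on $\mathcal{T}$. Also, no element inverts an edge, since the graph is bipartite.

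\textbf{Step 1: a fundamental domain.} Bass--Serre theory then says that $\Gamma$ is free and that $\Gamma\backslash\mathcal{T}$ is a graph whose fundamental group is $\Gamma$. Since $\Gamma$ is finitely generated, I will choose the convex hull $\mathcal{T}_\Gamma$ of the orbit $\Gamma\cdot v_0$, for the vertex $v_0=A$. This is a minimal $\Gamma$-invariant subtree, up to the finite segments needed to reach $v_0$. Because the action is free and cocompact there, $\Gamma\backslash\mathcal{T}_\Gamma$ is a finite graph. I then pick a finite connected fundamental domain $D\subset\mathcal{T}_\Gamma$ containing $v_0$, namely a lift of a spanning tree together with the edges that close the loops.

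\textbf{Step 2: the geodesic from $v_0$ to $g v_0$.} For $g\in\Gamma\setminus\{1\}$, the geodesic $[v_0,gv_0]$ lies in $\mathcal{T}_\Gamma$. It is therefore covered by translates $\gamma_0D=D,\gamma_1D,\dots,\gamma_mD=gD$, where consecutive translates $\gamma_iD$ and $\gamma_{i+1}D$ are adjacent via one of the finitely many pairing elements $s$ with $sD\cap D\neq\emptyset$.

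The normal form of $g$ in $A*_SE$ is read off from this geodesic. The vertices along it are $A, a_1E, a_1e_1A,\dots$, and each step "turns" at a vertex by an element of $A\setminus E$ or $E\setminus A$, taken modulo $S$. The point is that each turning element is determined by the local configuration of the geodesic: the incoming and outgoing edge at that vertex, expressed in the frame of the vertex. Since the geodesic runs through translates of the finite set $D$, this local configuration is, up to the $\Gamma$-translation $\gamma_i$, one of finitely many configurations inside $D\cup\bigcup_s sD$.

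To make this precise, I will fix once and for all a representative $h_v\in G$ for each vertex $v$ of $D$, and set $h_{\gamma v}:=\gamma h_v$ for $\gamma\in\Gamma$; this is well defined because the action is free. Then each letter has the form $h_v^{-1}h_{w}$ for adjacent vertices $v,w$. Modulo the choice of coset representatives in $S$, such a letter is one of finitely many elements coming from pairs of vertices in $D\cup\bigcup_sD$. Collecting these gives $\mathcal{A}$ and $\mathcal{E}$. Both families are symmetric after adding inverses, since reversing the geodesic inverts the letters.

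\textbf{Main obstacle.} The delicate point is handling the $S$-ambiguity coherently. Consecutive letters must multiply exactly to $g$, not just up to elements of $S$. The representatives must therefore be attached to edges as well: I choose $h_\epsilon\in G$ with $h_\epsilon S=\epsilon$ for each edge $\epsilon$ of $D$, and extend equivariantly. Then $g=h_{\epsilon_0}^{-1}\,g\,h_{\epsilon_0}$ telescopes as a product of the terms $h_{\epsilon_i}^{-1}h_{\epsilon_{i+1}}$, each of which lies in $A$ or $E$ depending on the common vertex of the two edges. Each such term is one of finitely many elements by equivariance.

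I also need to check that each letter lies in $A\setminus E$ or $E\setminus A$ rather than in $S$. This holds because the path is a geodesic, so consecutive edges are distinct. Finally, the first and last letters may be absent, depending on the type of the endpoint of the geodesic, which accounts for the clause "may start with an affine and end with an elementary".
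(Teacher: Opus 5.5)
Your argument is correct in substance, but it takes a different route from the paper. The paper does not prove this statement itself. It cites Lamy [Proposition 3.18 of \cite{La1}], where the idea is algebraic: one builds a special generating set $\Sigma$ of $\Gamma$ such that multiplying the normal forms of elements of $\Sigma$ produces only boundedly many simplifications, so all letters of all products come from a finite list.

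You instead argue geometrically:
\begin{itemize}
\item Pure loxodromy gives trivial vertex stabilizers, so $\Gamma$ acts freely on $\mathcal{T}$.
\item Finite generation gives an invariant subtree with finitely many edge orbits.
\item An equivariant choice of edge representatives makes each letter $h_{\epsilon_i}^{-1}h_{\epsilon_{i+1}}$ equal to $h_\delta^{-1}s\,h_{\delta'}$. Here $\delta,\delta'\in D$, and $s$ lies in the set of elements of $\Gamma$ sending a vertex of $D$ to a vertex of $D$, which is finite by freeness.
\end{itemize}
What your route buys: it is self-contained, needs no special generators, gives symmetry for free, and produces a \emph{canonical} writing. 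Each letter depends only on two consecutive edges of the geodesic issued from the base edge. So the stabilization of the first $k$ letters needed in Proposition \ref{prop17} follows directly from the convergence of geodesics in Proposition \ref{prop16}. Lamy's route yields explicit generators with a bounded-cancellation (ping-pong) property.

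There is one slip to fix. The identity $g=h_{\epsilon_0}^{-1}gh_{\epsilon_0}$ is false in general. Telescoping along the geodesic from $\epsilon_0$ to $g\epsilon_0$ gives $h_{\epsilon_0}^{-1}h_{g\epsilon_0}=h_{\epsilon_0}^{-1}gh_{\epsilon_0}$, which is a conjugate of $g$, not $g$. To get $g$ exactly, proceed as follows:
\begin{itemize}
\item Take as base edge the standard edge $S$ joining the vertices $A$ and $E$, and normalize $h_S=1$, so that $h_{gS}=g$.
\item This requires $S$ to lie in your invariant subtree, which can fail for the hull of $\Gamma\cdot A$. Build the subtree from the edge instead, e.g.\ $\mathcal{T}_\Gamma:=\Gamma\cdot\bigcup_{\sigma\in\Sigma}[S,\sigma S]$ for a finite symmetric generating set $\Sigma$.
\item This set is connected, since $\gamma Y$ and $\gamma\sigma Y$ both contain $\gamma\sigma S$. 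It is $\Gamma$-invariant, contains $S$, and has finitely many edge orbits. This also supplies the justification of cocompactness that you only asserted.
\end{itemize}
Alternatively, you can keep $v_0=A$ and observe that the two boundary factors $h_{\epsilon_0}$ and $h_{\epsilon_m}^{-1}g$ lie in $A$ and range over finite sets. They can then be absorbed into the neighbouring letters.

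Finally, the absence of inversions comes from the fact that the action preserves vertex types ($gA\mapsto fgA$, $gE\mapsto fgE$). Bipartiteness of the graph alone does not give it.
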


\begin{proof}
The idea of the proof is to show that $\Gamma$ admits a generating set $\Sigma$ satisfying the following property: if one writes $f \in \Sigma$ as a product of elements of $A\setminus E $ and $E\setminus A$, then there are only finitely many simplifications that appear among every product $f_1\cdots f_n$ of elements in $\Sigma$.
See \cite[Proposition 3.18]{La1} for the details of the proof.
\end{proof}

By Proposition \ref{prop2}, we can upgrade Proposition \ref{prop16} to the following result. 

\begin{prop}\label{prop17}
    Consider a finitely supported, non-elementary, purely loxodromic, probability measure $\mu$ on $G$. Consider the finite families  $\mathcal{A} \subset A\setminus E$ and $\mathcal{E} \subset E\setminus A $  given by Propostion \ref{prop2}. For $\mu^\mathbb{N}$-almost every $\omega \in \Omega$, there exist sequences
    \begin{itemize}
        \item $t_\omega=(t_k)_{k \geq 1} \in \mathbb{N}^\mathbb{N}$, with $t_{k+1} >t_k$ for any $k$,
        \item $a_\omega=(a_k)_{k \geq 1} \in \mathcal{A}^\mathbb{N}$,
        \item  $ e_\omega=(e_k)_{k\geq 1}\in \mathcal{E}^\mathbb{N}$,
    \end{itemize}
    such that: for every $k\geq 1$ and $n\geq t_k$, we can write $$f^n_\omega=a_{l(n),n}e_{l(n),n}\cdots a_{1,n}e_{1,n}a_ke_k \cdots a_1e_1 $$
    with $l(n)\geq 0$ and $a_{i,n} \in \mathcal
    A, e_{i,n}\in \mathcal{E}$. This writing may start with an affine map and end with an elementary map.
\end{prop}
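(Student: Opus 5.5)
The plan is to combine Proposition \ref{prop16} with the uniqueness of normal forms in the amalgamated product $G=A\underset{\cap}{*}E$. Proposition \ref{prop2} will let us replace the pieces $a_k,e_k$ and $a_{i,n},e_{i,n}$ by elements of the finite families $\mathcal A,\mathcal E$.

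\textbf{Step 1: fix the normal form of each $f^n_\omega$.} Since $\Gamma_\mu$ is purely loxodromic, every $f^n_\omega\neq 1$ lies in $\Gamma_\mu\setminus\{1\}$. Proposition \ref{prop2} then gives a writing
$$f^n_\omega=\alpha_{m,n}\epsilon_{m,n}\cdots\alpha_{1,n}\epsilon_{1,n}, \qquad \alpha_{j,n}\in\mathcal A,\ \epsilon_{j,n}\in\mathcal E.$$
This is a reduced word in the amalgamated product, since each letter lies in $A\setminus E$ or $E\setminus A$. The case $f^n_\omega=1$ only occurs for finitely many $n$ almost surely, because the walk escapes to infinity in $\mathcal T$ by Theorem \ref{prop3}; so it can be ignored for large $n$.

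\textbf{Step 2: compare with Proposition \ref{prop16}.} For almost every $\omega$, that proposition provides, for every $n\geq t_k$, another reduced writing of $f^n_\omega$ whose first $2k$ letters (read from the right) are $e_1,a_1,\dots,e_k,a_k$. Normal forms in an amalgamated product are unique up to inserting elements of $S$ between consecutive letters. Hence, for each $i\leq k$, the $i$-th letters of the two writings agree up to such $S$-modifications. Concretely, the right cosets and partial products coincide: $a_ie_i\cdots a_1e_1$ and $\alpha_{i,n}\epsilon_{i,n}\cdots\alpha_{1,n}\epsilon_{1,n}$ differ by left multiplication by some $s_i\in S$. Geometrically, both writings describe the same initial segment of the geodesic from $S$ to $(f^n_\omega)^{-1}S$ in $\mathcal T$.

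\textbf{Step 3: make the choices consistent in $n$.} This is the main obstacle. The words given by Proposition \ref{prop2} for different $n$ need not have literally the same first letters; they only agree up to $S$. To fix this, note that for fixed $i$ the element $\alpha_{i,n}\epsilon_{i,n}\cdots\alpha_{1,n}\epsilon_{1,n}$ ranges over a finite set, since $\mathcal A,\mathcal E$ are finite. Moreover, for $n\geq t_k$ it represents the fixed coset $S\,a_ie_i\cdots a_1e_1$. So I will choose a representative word in $\mathcal A,\mathcal E$ for each of these finitely many cosets, defining $a_k',e_k'$ inductively in $k$.

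The induction goes as follows. For $n\geq t_k$, the chosen word for level $k-1$ and the letters $\alpha_{k,n},\epsilon_{k,n}$ differ only by elements of $S$, which can be absorbed into the later letters. Since only finitely many options exist at each level, a diagonal or pigeonhole argument lets me fix $a_k'\in\mathcal A$, $e_k'\in\mathcal E$ once and for all. If needed, I will enlarge $t_k$ so that the remaining tail is a word in $\mathcal A,\mathcal E$ for all $n\geq t_k$, and keep $t_{k+1}>t_k$.

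The delicate point is ensuring that the tail letters $a_{i,n},e_{i,n}$ can still be taken in $\mathcal A,\mathcal E$ after absorbing the elements of $S$. I would handle this by defining the tail as the remaining letters of Lamy's word for $f^n_\omega$ itself. This amounts to showing that Lamy's word for $f^n_\omega$ can be chosen with first $k$ letters exactly $a_1',e_1',\dots,a_k',e_k'$. The most natural way to get that is to use the property in the proof of Proposition \ref{prop2}: each $f^n_\omega$ is obtained by concatenating fixed words for the generators in $\Sigma_\mu$, with only finitely many simplifications. By Theorem \ref{prop3}, the prefix of the concatenation up to depth $k$ stabilises once $n\geq t_k$, and it is already a word in $\mathcal A,\mathcal E$.
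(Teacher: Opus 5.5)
Your Steps 1 and 2 are sound, and they amount to everything the paper itself says (it only states that Proposition~\ref{prop2} upgrades Proposition~\ref{prop16}). The obstacle you isolate in Step 3 is real, however, and your resolution does not overcome it.

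\textbf{The choice in Step 3.} A pigeonhole choice of $a_k',e_k'$ only matches Lamy's words along a subsequence of $n$. The statement needs one choice valid for every $n\geq t_k$, and enlarging $t_k$ does not turn a subsequence into a tail.

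\textbf{Absorbing $s_n$.} Absorbing the discrepancy $s_n\in S$ turns the next letter into $\epsilon_{k+1,n}s_n$, which need not lie in $\mathcal E$. Here $s_n$ is the quotient of two elements of $(\mathcal A\mathcal E)^k$ lying in the same coset, so nothing bounds the set of such $s_n$ independently of $k$. You therefore cannot fix this by enlarging $\mathcal E$ by a fixed finite subset of $S$.

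\textbf{The statement of Proposition~\ref{prop2} alone.} It gives no compatibility between writings of different elements. Your prefixes $a_k'e_k'\cdots a_1'e_1'$ are elements of $G$, not of $\Gamma_\mu$, so Proposition~\ref{prop2} says nothing about $f^n_\omega(a_k'e_k'\cdots a_1'e_1')^{-1}$.

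\textbf{The fallback in your last paragraph.} It attributes Lamy's bounded-simplification property to $\Sigma_\mu$. As the sketch of Proposition~\ref{prop2} indicates, that property holds for a specially constructed generating set $\Sigma$, and only for reduced products. For the walk it fails: if $f,f^{-1}\in\Sigma_\mu$, the product $f^{-m}f^{m}$ occurs with positive probability and cancels completely. So simplifications along the walk are unbounded, and the merged letters they create need not lie in any fixed finite set. Moreover, Theorem~\ref{prop3} only stabilizes the geodesic in $\mathcal T$, that is, the letters up to $S$. Hence ``the prefix stabilises and is already a word in $\mathcal A,\mathcal E$'' is exactly the claim that remains unproved.

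\textbf{A way to close the gap.} You need a canonical, prefix-compatible writing rather than an arbitrary one for each $n$.
\begin{enumerate}
\item $\Gamma_\mu$ acts freely on $\mathcal T$, hence is free. One can use a free basis $\Sigma$ whose reduced words have bounded, local simplifications, as in Lamy's argument.
\item For a reduced $\Sigma$-word $u$, concatenate fixed normal forms of its letters and perform these simplifications. This gives a word in $\mathcal A,\mathcal E$ whose first $k$ pairs depend only on the $N(k)$ rightmost letters of $u$.
\item It remains to show that the reduced $\Sigma$-word of $f^n_\omega$ stabilizes from the right. This follows from Theorem~\ref{prop3} applied to the right $\check\mu$-walk $(f^n_\omega)^{-1}$ on the Cayley tree of $(\Gamma_\mu,\Sigma)$.
\end{enumerate}
With such canonical words the $S$-ambiguity never arises.
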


Notice the difference with Proposition \ref{prop16}: we only need finitely many affine and elementary automorphisms to write any $f^n_\omega$.

\subsection{Asymptotic entropy}
 We give a corollary of Maher--Tiozzo's theorem that will be a key ingredient in the proof of Theorem \ref{thm7}. 
 
\begin{cor}\label{cor3}
    Consider a finitely supported, non-elementary, probability measure $\mu$ on $G$. There exists $h>0$, such that if $A_n $ are subsets of $\Gamma_\mu$ satisfying $$| A_n| \leq Ce^{n(h-\epsilon)},$$ for some $\epsilon>0$ then 
        $$\mu^\mathbf{N}(\limsup r^n_\omega  \in A_n)=0,$$
    that is, for $\mu^\mathbb{N}$-almost every $\omega \in \Omega$, the right walk $r^n_\omega$ is located in the subset $A_n$ only for finitely many times.
\end{cor}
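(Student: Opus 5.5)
We will deduce Corollary \ref{cor3} from the theory of asymptotic entropy for random walks, with the Poisson boundary identification of Theorem \ref{prop3}(3) providing positivity of the entropy. Let $h:=\lim_n \frac{1}{n}H(\mu^{*n})$ be the Avez asymptotic entropy of $(\Gamma_\mu,\mu)$. Since $\mu$ is finitely supported, $H(\mu)<\infty$ and the limit exists by subadditivity. The Kaimanovich--Vershik / Derriennic theory then gives the Shannon--McMillan--Breiman type statement
$$-\frac{1}{n}\log \mu^{*n}(r^n_\omega)\longrightarrow h \quad \text{for }\mu^\mathbb{N}\text{-a.e. }\omega.$$
It also gives the entropy criterion: $h=0$ if and only if the Poisson boundary of $(\Gamma_\mu,\mu)$ is trivial.

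The first step is therefore to show $h>0$. By Lemma \ref{lemme2}, $\mu$ is WPD: the semigroup generated by $\Sigma_\mu$ contains a loxodromic element, by non-elementarity. Theorem \ref{prop3}(3) then identifies the Poisson boundary with $(\partial\mathcal{T},\eta)$, where $\eta$ is non-atomic. A non-atomic measure is not a point mass, so the Poisson boundary is non-trivial and $h>0$.

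Strictly speaking, one should first pass to the group-theoretic setup. The walk $r^n_\omega=f_0\cdots f_{n-1}$ has law $\mu^{*n}$ on $\Gamma_\mu$, and we use the boundary of the right walk exactly as in Theorem \ref{prop3}. The Poisson boundary of the walk in the sense of harmonic functions is what the entropy criterion refers to.

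Now fix $\epsilon>0$ and sets $A_n$ with $|A_n|\le Ce^{n(h-\epsilon)}$. Split
$$\{r^n_\omega\in A_n\}\subset \{r^n_\omega\in A_n,\ \mu^{*n}(r^n_\omega)\le e^{-n(h-\epsilon/2)}\}\cup\{\mu^{*n}(r^n_\omega)> e^{-n(h-\epsilon/2)}\}.$$
The first event has probability at most $|A_n|e^{-n(h-\epsilon/2)}\le Ce^{-n\epsilon/2}$, which is summable. Borel--Cantelli then shows it occurs only finitely often almost surely. The second event occurs only finitely often almost surely by the Shannon--McMillan--Breiman convergence above. Together these give $\mu^\mathbb{N}(\limsup\{r^n_\omega\in A_n\})=0$.

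The main obstacle is the positivity of $h$, which rests entirely on the WPD identification of the Poisson boundary in Theorem \ref{prop3}(3). We also need the almost sure, not merely $L^1$, entropy convergence. This follows from Kingman's subadditive ergodic theorem, as in Kaimanovich--Vershik, because $\mu$ is finitely supported.
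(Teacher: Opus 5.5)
Your proof is correct and follows the paper's argument. You get almost sure convergence of $-\frac{1}{n}\log\mu^{*n}(r^n_\omega)$ to $h$ from Kingman's subadditive ergodic theorem and positivity of $h$ from the Kaimanovich--Vershik criterion. You then use the same split along the events $\{\mu^{*n}(r^n_\omega)\le e^{-n(h-\epsilon/2)}\}$, with the summable bound $Ce^{-n\epsilon/2}$ and Borel--Cantelli.

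Your extra justification of the non-triviality of the Poisson boundary, via a WPD loxodromic element (Lemma \ref{lemme2}) and the non-atomic hitting measure of Theorem \ref{prop3}, only makes explicit what the paper states in one line. Note that non-atomicity of $\eta$ alone already makes $(\partial\mathcal{T},\eta)$ a non-trivial $\mu$-boundary, so WPD is not actually needed for this step.
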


\begin{proof}
    We denote by $\mu^{*n}$ the $n$-th convolution of $\mu$ with itself, that is, the law of the product of $n$ independent random variables $g_1, \dots, g_n$ of law $\mu$. By Kingman's theorem, the asymptotic entropy of the random walk  $$h:=\lim \frac{-\log \mu^{*n}(r^n_\omega )}{n}$$
    is well-defined $\mu^\mathbb{N}$-almost surely. Moreover, by Kaimanovich--Vershik's criterion \cite{KV}, $h$ is positive due to the non-triviality of the Poisson boundary. Consider the event $$\Omega_n:=\{\omega \in \Omega \mid \mu^{*n}(r^n_\omega) \leq e^{-n(h-\frac{\epsilon}{2})}\}.$$ Then, $$\mu^\mathbb{N}(\omega \in \Omega_n, r^n_\omega \in A_n) \leq Ce^{-n\frac{\epsilon}{2}}.$$
    Thus, Borel-Cantelli's lemma implies that
    $$\mu^\mathbb{N}(\limsup (\omega \in \Omega_n, r^n_\omega \in A_n))=0. $$
    We have also $$\mu^\mathbb{N}(\liminf \Omega_n)=1$$ and this concludes the proof.
\end{proof}

\section{Filtration and Green functions}\label{section3}

Let $\mu$ be a finitely supported, non-elementary and purely loxodromic probability measure on $G$. We study the dynamics at infinity of the random dynamical system $(\mathbb{C}^2, \mu)$ using the normal form obtained in Proposition \ref{prop17}. We use classical filtration properties to show that Green functions are well-defined. We refer to \cite{GZ} for basics on pluripotential theory.  

\subsection{Filtration}
Recall that a Hénon map $f(x,y)=(y, p(y)-ax)$ has the following dynamics at infinity: its indeterminacy point is  $I:=[1:0:0]$ and $f$ contracts the line at infinity $L_\infty$ to the point $[0:1:0]$ which is a super-attracting fixed point for $f$. One can show that a forward orbit is either bounded or is in the super-attracting basin and thus the Green function of $f $, $$G_f:=\lim \frac{1}{\deg(f^n)} \log^+\Vert f^n \Vert$$
is well-defined on the plane \cites{BS, DS, Abboud2}.  
Consider now the finite families $\mathcal {A} \subset A\setminus E$ and $\mathcal{E} \subset E \setminus A$ given by Proposition \ref{prop2} and define  $$\mathcal{H}:=\mathcal{A}\mathcal{E}=\{ h=ae \mid \, a\in \mathcal{A}, e\in \mathcal{E} \}.$$
Any element $h \in \mathcal{H}$ has indeterminacy point $I$ and contracts the line at infinity to the point $a(I) \neq I$. By Proposition \ref{prop17}, we can write $f^n_\omega$ as a product of elements of $\mathcal{H}$ up to multiplication by an affine map on the left and on the right. We will find a neighborhood of $L_\infty \setminus I$ which is invariant by every element of $\mathcal{H}$, and on which we have a good control on the growth rate of orbits of the semigroup generated by $\mathcal{H}$.

Denote by $\Vert \cdot \Vert $ the sup norm in $\mathbb{C}^2$.
We consider, for $R \geq 1$ and $\epsilon \in (0,1)$ the subsets 
    \begin{align*}
        V_{R,\epsilon}^+ &:= \{(x,y) \in \mathbb{C}^2 \mid \, \Vert (x,y) \Vert \geq R \; \text{and} \; |y| \geq \epsilon |x| \, \},\\
        V_{R,\epsilon}^- &:= \{(x,y) \in \mathbb{C}^2 \mid \, \Vert (x,y) \Vert \geq R \; \text{and} \; |x| > \epsilon^{-1} |y| \, \},
    \end{align*}
so that we have a partition $\mathbb{C}^2 \setminus B(0,R)=V_{R,\epsilon}^+ \cup V_{R,\epsilon}^-.$ Notice that the family of closures of $V^-_{R,\epsilon}$ in $\mathbb{P}^2$ form a basis of neighborhoods of the point $I$.

\begin{prop}\label{prop6}
        There exists $\epsilon_0>0$ such that for every $\epsilon \in (0,\epsilon_0)$, there exist $R_\epsilon \geq 1$, $C_\epsilon >0$ such that for every $h \in \mathcal{H} $ and $R\geq R_\epsilon$, the following hold. 
        \begin{enumerate}
            \item $h(V_{R,\epsilon}^+) \subset V_{R,\epsilon}^+$.
            \item $\Vert h(q)\Vert \geq C_\epsilon \Vert q \Vert^{\deg(h)}$ for any $q \in V_{R,\epsilon}^+.$
            \item $h^{-1}(V_{R,\epsilon}^-) \subset V_{R,\epsilon}^-.$
        \end{enumerate}
\end{prop}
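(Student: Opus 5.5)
We work directly with the explicit form of the finitely many maps $h=ae\in\mathcal H$. Write $e(x,y)=(\alpha x+p(y),\beta y+\delta)$ with $d:=\deg p\geq 2$, since $e\notin A$. Write $a(x,y)=(a_1x+b_1y+c_1,a_2x+b_2y+c_2)$. The condition $a\notin E$, i.e.\ $a(I)\neq I$, means exactly $a_2\neq 0$. Because $\mathcal H$ is finite, every constant below can be taken uniform over $\mathcal H$.

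\emph{Choice of $\epsilon_0$.} On $V^+_{R,\epsilon}$ we have $|y|\geq\epsilon|x|$, so $\|q\|=\max(|x|,|y|)\leq \epsilon^{-1}|y|$. The first coordinate of $e$ is dominated by $p(y)$: if $c_p$ is the leading coefficient,
\[
|\alpha x+p(y)|\geq |c_p||y|^d-|\alpha|\epsilon^{-1}|y|-O(|y|^{d-1}),
\]
and the second coordinate satisfies $|\beta y+\delta|=O(|y|)$. So for $|y|$ large (that is, $R\geq R_\epsilon$, using $|y|\geq\epsilon R$) the point $e(q)=(X,Y)$ has $|X|\geq \tfrac12|c_p||y|^d$ and $|Y|\leq C|y|\ll |X|$. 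Applying $a$, the new point $(x',y')$ satisfies $|x'|\leq C'|X|$ and $|y'|\geq |a_2||X|-|b_2||Y|-|c_2|\geq \tfrac12|a_2||X|$. Hence $|y'|\geq \epsilon|x'|$ as soon as $\epsilon\leq\epsilon_0:=\min_{h}|a_2|/(4\max(|a_1|,|b_1|,1))$, which gives the cone condition. Also $\|h(q)\|\geq|y'|\geq c|y|^d\geq c\,\epsilon^d\|q\|^d$, which is (2) with $C_\epsilon=c\epsilon^{d}$, taking the minimum over $h$. For the norm condition in (1), we need $c\epsilon^d\|q\|^{d}\geq R$ whenever $\|q\|\geq R$. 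This holds once $c\epsilon^dR^{d-1}\geq1$, so we enlarge $R_\epsilon$ accordingly. We have $\deg h=d$ because $a$ is affine and $a_2\neq 0$ prevents cancellation.

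\emph{Item (3).} This follows formally from (1) and the partition. Since $h$ is a bijection of $\mathbb C^2$, it suffices to show that $q\notin V^-_{R,\epsilon}$ implies $h(q)\notin V^-_{R,\epsilon}$. If $q\in V^+_{R,\epsilon}$, then $h(q)\in V^+_{R,\epsilon}$ by (1). The remaining case is $q\in B(0,R)$, i.e.\ $\|q\|<R$, and this is the main obstacle: we must show that $h(q)$ is not in $V^-_{R,\epsilon}$. Here $\|h(q)\|$ may exceed $R$, so we need to see that $h(q)$ then lands in $V^+$. I would handle this by estimating $h(q)$ directly when $\|h(q)\|\geq R$.

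If $|y|$ is large, the computation above applies with $|y|$ replacing $\epsilon|x|$ as the controlling quantity. Indeed, for $|y|\geq M$ with $M$ fixed and independent of $R$, and with $|x|<R$, the term $p(y)$ need not dominate, so a more careful argument is required. Alternatively, one can run the forward argument for $h^{-1}=e^{-1}a^{-1}$ directly. Write $a^{-1}(x,y)=(u,v)$; for $|x|>\epsilon^{-1}|y|$ large, $v\approx \tilde a x$ with $\tilde a\neq0$, again because $a^{-1}(I)\neq I$ fails only if $I$ is fixed. Then $e^{-1}(u,v)=(\alpha^{-1}(u-p(\beta^{-1}(v-\delta))),\beta^{-1}(v-\delta))$ has first coordinate of size $\sim|v|^d\gg|v|$, and the resulting point lies in $V^-$ with large norm. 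This symmetric computation requires shrinking $\epsilon_0$ depending on the coefficients of $a^{-1}$, and enlarging $R_\epsilon$. I would present (3) this way, mirroring the proof of (1), taking the minimum of the two values of $\epsilon_0$ and the maximum of the values of $R_\epsilon$ over the finite set $\mathcal H$.
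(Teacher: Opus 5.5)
Your proposal is correct and follows essentially the paper's argument: $e$ sends $V^+_{R,\epsilon}$ far out near $I$, the condition $a(I)\neq I$ (your $a_2\neq 0$) brings the image back into $V^+_{R,\epsilon}$, and (3) is obtained by running the same estimate on $h^{-1}=e^{-1}a^{-1}$, exactly as the paper does. Your attempted formal reduction of (3) to (1) is an unnecessary detour, since its open case is exactly what the direct computation on $h^{-1}$ settles, so you can drop it.
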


\begin{proof}
    Consider an elementary map in $\mathcal{E}$, say $e(x,y)=(\alpha x +p(y), \beta y + \delta)$ with $\deg(p) \geq 2$ and $\alpha \beta \neq 0$. If $\epsilon >0$ is fixed, then, for $R$ large enough and $(x,y) \in V_{R,\epsilon }^+$, we have 
    \begin{align*}
            |\alpha x + p(y)| &\geq |p(y)| - |\alpha||x|\\
            & \geq |p(y)| - \frac{|\alpha|}{\epsilon} |y| \\
            &\geq C |y|^{\deg(p)} \\
            &\geq C \epsilon^{\deg(p)} \Vert (x,y) \Vert^{\deg(p)}.
    \end{align*}
    for some $C>0$. 
    Up to increase $R$, we have $|\alpha x + p(y)| \geq \epsilon^{-1} |\beta y + \delta|,$ and we obtain  $$e(V_{R,\epsilon}^+) \subset V_{C(\epsilon R)^{\deg(p)}, \epsilon}^-.$$
    If $a \in \mathcal{A}$, then up to increase $R$ again, there exists $C_a>0$ such that $\Vert a(q) \Vert \geq C_a \Vert q \Vert  $
    when $\Vert q \Vert \geq R$. 
    We can also assume $C(\epsilon R)^{\deg(p)} \geq R$, and we obtain 
    \begin{align*}
        \Vert h(q) \Vert &\geq C_a \Vert e(q) \Vert \\
        &\geq C_a C \epsilon^{\deg(p)} \Vert q \Vert^{\deg(p)}.
    \end{align*}
    Moreover, as $I$ is not fixed by $a$, we can choose $\epsilon>0$ small enough, and $R\geq R_\epsilon$ large enough, such that
    $$a(V_{C(\epsilon R)^{\deg(p)},\epsilon}^-)\subset V_{R,\epsilon }^+.$$
    This concludes the proof for one element $h=ae $ in $\mathcal{H}$ and we can choose uniform constants as $\mathcal{H} $ is finite. We write $h^{-1}=e^{-1}a^{-1}$, and the third part follows from the same proof as above and the fact that $\mathcal{A}$ and $\mathcal{E}$ are symmetric.
\end{proof}

Considering also upper bounds, we obtain the following corollary.

\begin{cor}\label{cor2}
    There exists $\epsilon_0>0$ such that for every $\epsilon \in (0,\epsilon_0)$, there exist $R_\epsilon \geq 1$ and $M_\epsilon \geq 1$ such that  $$ \sup_{q \in V_{R_\epsilon,\epsilon}^+}\left\vert \frac{1}{\deg(h)} \log^+\Vert h(q) \Vert - \log^+\Vert q\Vert \right\vert \leq M_\epsilon $$
    for every $h\in \mathcal{H}$.
\end{cor}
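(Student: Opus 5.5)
The lower bound is already contained in Proposition \ref{prop6}, so what remains is a matching upper bound on $V^+_{R_\epsilon,\epsilon}$. Both are elementary estimates for the finitely many maps in $\mathcal{H}$, and uniformity comes from finiteness.

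Fix $\epsilon\in(0,\epsilon_0)$ and $R_\epsilon$ as in Proposition \ref{prop6}, and take $q\in V^+_{R_\epsilon,\epsilon}$. Since $R_\epsilon\geq 1$ we have $\log^+\Vert q\Vert=\log\Vert q\Vert$. By Proposition \ref{prop6}.(1), $h(q)\in V^+_{R_\epsilon,\epsilon}$, so also $\log^+\Vert h(q)\Vert=\log\Vert h(q)\Vert$. Proposition \ref{prop6}.(2) then gives
$$\frac{1}{\deg(h)}\log\Vert h(q)\Vert-\log\Vert q\Vert\geq \frac{\log C_\epsilon}{\deg(h)}\geq -|\log C_\epsilon|,$$
using $\deg(h)\geq 1$.

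For the upper bound, write $h=ae$ with $e(x,y)=(\alpha x+p(y),\beta y+\delta)$. Here $\deg(h)=\deg(e)=\deg(p)=:d$, since $a$ is affine and invertible. Every coordinate of $h$ is a polynomial of degree at most $d$ in $(x,y)$. Hence there is $D_h>0$ with $\Vert h(q)\Vert\leq D_h\Vert q\Vert^{d}$ whenever $\Vert q\Vert\geq 1$: bound each monomial $|x^iy^j|\leq\Vert q\Vert^{i+j}\leq \Vert q\Vert^d$ and sum the absolute values of the coefficients. Taking logarithms and dividing by $d$ gives
$$\frac{1}{d}\log\Vert h(q)\Vert-\log\Vert q\Vert\leq \frac{\log D_h}{d}\leq |\log D_h|.$$
This holds on all of $\{\Vert q\Vert\geq1\}$, in particular on $V^+_{R_\epsilon,\epsilon}$.

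Finally, set $M_\epsilon:=\max\bigl(1,|\log C_\epsilon|,\max_{h\in\mathcal{H}}|\log D_h|\bigr)$. This is finite because $\mathcal{H}$ is finite by Proposition \ref{prop2}. There is no real obstacle here. The one point to check is that both $q$ and $h(q)$ have norm at least $1$, so that $\log^+$ agrees with $\log$. For $q$ this holds by definition of $V^+_{R_\epsilon,\epsilon}$, and for $h(q)$ it follows from the invariance in Proposition \ref{prop6}.(1), which keeps the lower bound argument valid.
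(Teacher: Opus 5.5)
Your proof is correct and takes the same route as the paper. The lower bound comes from Proposition \ref{prop6}.(2), the upper bound from the polynomial estimate $\Vert h(q)\Vert \leq C\Vert q\Vert^{\deg(h)}$ for $\Vert q\Vert\geq 1$, and uniformity from the finiteness of $\mathcal{H}$; your use of Proposition \ref{prop6}.(1) to check that $\log^+$ agrees with $\log$ at $q$ and $h(q)$ makes explicit a detail the paper leaves implicit.
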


\begin{proof}
    The lower bound 
    $$\frac{1}{\deg(h)} \log^+\Vert h(q) \Vert - \log^+\Vert q\Vert \geq \frac{\log C_\epsilon}{\deg(h)}$$
    is given by Proposition \ref{prop6}.
    Moreover, there exists $C\geq 1$ such that 
    $$\Vert h(q )\Vert \leq C \Vert q \Vert ^{\deg(h)}$$
    when $\Vert q \Vert \geq 1$. This gives the desired upper bound.
\end{proof}

\subsection{Green functions}\label{section3.2}
We give the proof of existence of the Green function $G_\omega$ for $\omega=(f_n)_{n\geq 0} $ in a full measure subset of $\Omega$. Let $a_\omega \in \mathcal{A}^\mathbb{N}$ and $e_\omega \in \mathcal{E}^\mathbb{N}$ be the sequences given by Proposition \ref{prop17}. We assume first that the normal form of $f^n_\omega$ starts with an elementary map. We define $h_k:= a_ke_k$ for any $k \geq 1$. Similarly, we define $h_{j,n}:=a_{j,n}e_{j,n}$ for any $j \leq l(n)$. Thus, we can write $f^n_\omega$ as a product of elements of $\mathcal{H}$, that is, $$f^n_\omega=h_{l(n),n} \cdots h_{1,n} h_k \cdots h_1$$
when $n \geq t_k$, up to multiplication on the left by an element of $\mathcal{A}$.

We define also $h^k_\omega:=h_k \cdots h_1 $ and $h^{j,n}_\omega:= h_{j,n}\cdots h_{1,n}$ so that we have the equality $f^n_\omega=h^{l(n),n}_\omega h^k_\omega$ when $n\geq t_k$.
We introduce the following notation for the degrees: we denote by $d_k:=\deg(h_k)$ the degree of $h_k$ and by $d_{j,n}:=\deg(h_{j,n})$ the degree of $h_{j,n}$. We define also $d^k_\omega:=d_k \cdots d_1$ and $d^{j,n}_\omega:=d_{j,n} \cdots d_{1,n}$.
By Lemma \ref{lemme1}, we have
$$\deg(f^n_\omega)=d^k_\omega d^{l(n),n}_\omega.$$

The proof that the sequence $$G_{n,\omega}:= \frac{1}{\deg(f^n_\omega)}\log^+ \Vert f^n_\omega \Vert   $$
converges to a continuous plurisubharmonic function on $\mathbb{C}^2$ goes as follows: we first prove that the auxiliary sequence $$u_k:=\frac{1}{d^k_\omega} \log^+\Vert h^k_\omega \Vert$$
converges to a continuous plurisubharmonic function, and then, we show that the limit of $G_{n,\omega}$ is the same as the limit of $u_k$.

\begin{prop}\label{prop18}
    The sequence $(u_k)_{k\geq 1} $ converges to a continuous plurisubharmonic function $u$ on $\mathbb{C}^2$. Moreover, there exists $\epsilon_0>0$, such that for every $\epsilon \leq \epsilon_0$, there exists $R_\epsilon \geq 1$, satisfying $$V_{R_\epsilon,\epsilon}^+ \subset \{ u >0\}.$$ 
    These constants do not depend on $\omega$.
\end{prop}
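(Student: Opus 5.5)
We follow the classical Bedford--Smillie/Hubbard argument for Hénon maps, using the uniform filtration estimates of Proposition \ref{prop6} and Corollary \ref{cor2}. The key point is that all the maps $h_k$ belong to the finite set $\mathcal{H}$. Hence the constants $\epsilon_0$, $R_\epsilon$, $C_\epsilon$, $M_\epsilon$ are uniform in $k$ and in $\omega$. We fix $\epsilon<\epsilon_0$ and $R=R_\epsilon$ as in Corollary \ref{cor2}, enlarging $R$ if necessary so that Proposition \ref{prop6} also applies.

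\emph{Step 1: convergence on $V^+:=V^+_{R,\epsilon}$.} By Proposition \ref{prop6}(1), $h^{k}_\omega(q)\in V^+$ for every $q\in V^+$ and every $k$. Corollary \ref{cor2} applied to $h_{k+1}$ at the point $h^k_\omega(q)$ gives, after dividing by $d^k_\omega$,
$$|u_{k+1}(q)-u_k(q)|\le \frac{M_\epsilon}{d^k_\omega}\le \frac{M_\epsilon}{2^k},$$
since every $d_i\ge 2$. So $u_k$ converges uniformly on $V^+$. The telescoping sum also gives $|u-\log^+\Vert\cdot\Vert|\le 2M_\epsilon$ on $V^+$. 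Since $\Vert q\Vert\ge R$ there, we get $u\ge \log R-2M_\epsilon$. To obtain $V^+_{R',\epsilon}\subset\{u>0\}$, we simply enlarge $R$ to $R_\epsilon$ with $\log R_\epsilon>2M_\epsilon$. None of these constants depends on $\omega$.

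\emph{Step 2: the complement.} Let $K:=\{\Vert q\Vert\le R\}\cup V^-_{R,\epsilon}=\mathbb{C}^2\setminus V^+$. Suppose $q\in K$ and $h^{j}_\omega(q)\in K$ for every $j$. Then $\Vert h^j_\omega(q)\Vert$ is bounded. Indeed, by Proposition \ref{prop6}(3), $h^{-1}(V^-)\subset V^-$, so $h(\mathbb{C}^2\setminus V^-)\subset \mathbb{C}^2\setminus V^-$. An orbit point in $V^-$ therefore has all its preimages in $V^-$. Since $h^0_\omega(q)=q$ lies in $K$, the orbit either stays in the ball $B(0,R)$, or starts in $V^-$ and would then have to go to $V^+$ or the ball. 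For the orbit $h^j_\omega(q)$ with $j\ge1$: since $h_j$ maps the line at infinity away from $I$, we have $h(V^-_{R,\epsilon})\subset V^+$ for large $R$ (this is essentially the proof of Proposition \ref{prop6}). So after one step the point lands in $V^+$ or in the ball.

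Hence either $h^j_\omega(q)$ enters $V^+$ at some first time $j_0$, or the orbit stays in $\overline{B}(0,R')$ for some uniform $R'$ (the image of the ball by the finitely many maps in $\mathcal{H}$). In the second case $u_k(q)\le \log^+R'/d^k_\omega\to0$. In the first case $u_k(q)=\frac{1}{d^{j_0}_\omega}u^{(j_0)}_{k-j_0}(h^{j_0}_\omega(q))$, where $u^{(j_0)}$ denotes the same construction for the shifted sequence, which converges by Step 1. So $u_k$ converges pointwise on $\mathbb{C}^2$. Moreover $u=0$ on the set of bounded orbits, and $u=d^{-j_0}\,(\cdot)$ of a positive function elsewhere.

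\emph{Step 3: continuity and plurisubharmonicity.} This is the main obstacle: pointwise convergence is not enough, and we need local uniform convergence. On a compact set $L$, the bound $\Vert h(q)\Vert\le C\Vert q\Vert^{\deg h}$ together with $u_k\ge0$ gives $u_k\le C'$ uniformly. For uniform convergence, write $W_j:=(h^j_\omega)^{-1}(V^+)$. These are increasing open sets; take $q$ in $W_j$, interior to it after shrinking $R$ slightly. On $W_j$, Step 1 gives $|u_{k+1}-u_k|\le M_\epsilon/d^k_\omega$ for $k\ge j$, so the convergence is uniform on $W_j$. On $\bigcap_j (\mathbb{C}^2\setminus W_j)$ the orbits are bounded by $R'$, so $u_k\le \log^+R'/d^k_\omega$ there. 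Combining the two: for $k$ large we have $u_k\le\delta$ on $L\setminus W_J$ uniformly (for $j\ge J$, any point not yet in $V^+$ has norm at most $R'$), while on $L\cap W_J$ the Cauchy estimate holds. This yields uniform convergence on $L$.

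Since each $u_k$ is continuous and plurisubharmonic (as $\log^+$ of the norm of a polynomial map), the uniform limit $u$ is continuous and plurisubharmonic. The delicate point to check carefully is the inclusion $h(V^-)\subset V^+\cup B(0,R')$ with uniform constants, which is used in Step 2.
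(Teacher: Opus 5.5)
\textbf{There is a genuine gap.} Step 1 is correct and is the paper's argument. Steps 2 and 3, however, rest on the inclusion $h(V^-_{R,\epsilon})\subset V^+_{R,\epsilon}\cup B(0,R')$, and this inclusion is false.

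It actually contradicts Proposition \ref{prop6}(3). Since $h$ is a bijection of $\mathbb{C}^2$ and $h^{-1}(V^-_{R,\epsilon})\subset V^-_{R,\epsilon}$, you get
\[ V^-_{R,\epsilon}=h\bigl(h^{-1}(V^-_{R,\epsilon})\bigr)\subset h(V^-_{R,\epsilon}), \]
and $V^-_{R,\epsilon}$ is unbounded and disjoint from $V^+_{R,\epsilon}$. Concretely, for a H\'enon map $h(x,y)=(y,p(y)-ax)$, the points $(p(y)/a,y)$ with $|y|$ large lie in $V^-_{R,\epsilon}$ and are sent to $(y,0)\in V^-_{R,\epsilon}$.

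The heuristic ``$h$ maps the line at infinity away from $I$'' only concerns $L_\infty\setminus\{I\}$, i.e.\ $V^+$. By contrast, $V^-$ is a neighborhood of the indeterminacy point $I$ of $h$, whose total transform is all of $L_\infty$. So two steps are unjustified as written: your handling of starting points in $V^-$ in Step 2, and the Step 3 claim that a point not yet in $V^+$ has norm at most $R'$.

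\textbf{The fix.} The missing idea is the one the paper uses. Proposition \ref{prop6} holds for every $R\ge R_\epsilon$, and the constant $M_\epsilon$ of Corollary \ref{cor2} stays valid on $V^+_{R,\epsilon}\subset V^+_{R_\epsilon,\epsilon}$. So you may choose $R$ depending on the point $q$ (or, in Step 3, on the compact set $L$) so that $q$ (resp.\ $L$) lies in the open ball $B(0,R)$.

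By the invariance $h(\mathbb{C}^2\setminus V^-)\subset \mathbb{C}^2\setminus V^-$, which you already observed, such orbits never meet $V^-_{R,\epsilon}$. At every time they are either in $B(0,R)$ or have entered $V^+_{R,\epsilon}$ and stay there. No statement about $h(V^-)$ is needed.

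With this, Step 2 goes through. Step 3 also works, provided you bound things on $L\setminus W_J$ correctly: the bound should be $u_k,\,u\le C/d^J_\omega$ uniformly in $k\ge J$, not ``for $k$ large''. The reason is that a point entering $V^+$ at a time $j_0>J$ comes from the ball, so $u\le (\log^+R'+2M_\epsilon)/d^{j_0}_\omega$.

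\textbf{Comparison with the paper.} Once repaired, your route obtains continuity and plurisubharmonicity at once from local uniform convergence. The paper instead gets plurisubharmonicity from the decreasing sequence $u_k+c_k$, using $u_{k+1}\le u_k+C/d^k_\omega$. It then proves continuity on $\partial\{u=0\}$ separately, using hitting times that tend to infinity.
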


\begin{proof}
    Fix $\epsilon \in (0,\epsilon_0)$ where $\epsilon_0$ is given by Proposition \ref{prop6}. Then, for $R$ large enough, we have
    $$h^i_\omega (V_{R,\epsilon}^+) \subset V_{R,\epsilon}^+$$
    for every $i \geq 1$.
    Using Corollary \ref{cor2}, we obtain
    $$\sup_{V_{R,\epsilon}^+}|u_{i+1}-u_i | \leq \frac{M_\epsilon}{d^i_\omega}.$$
    We write $$u_k=u_1+\sum_{i=1}^{k-1} (u_{i+1} - u_i),$$
    and we conclude that $u_k$ converges uniformly to a continuous function $u$ on $V_{R,\epsilon}^+$. Moreover, by applying inductively the inequality of Proposition \ref{prop6}, we obtain that $u$ is positive on $V_{R,\epsilon}^+$ whenever $R$ is large enough. Notice that $R$ and $\epsilon$ do not depend on $\omega$, as the proof works for any sequence $(h_k) \in \mathcal{H}^\mathbb{N}$.
    
    We consider now a point $q \in \mathbb{C}^2.$ Choosing $R$ large enough, we may assume that $\Vert q \Vert \leq R$. To define the limit $u(q)$, we  distinguish two cases. If the orbit $(h_\omega^k(q)) $ remains in $B(0,R)$, we have convergence to $u(q)=0$. Otherwise, there exists a hitting time $l\geq 1$ of the complement of the bidisk $B(0,R)^c$. By Proposition \ref{prop6}.(3), we have $$h^l_\omega(q) \in V_{R,\epsilon}^+.$$
    Then, for $k \geq l$, we write
    $$u_k(q) = \frac{1}{d^l_\omega } \frac{1}{d_k \cdots d_{l+1}} \log^+\Vert h_k \cdots h_{l+1}(h^l_\omega(q))\Vert.$$
    The limit $$v_l:=\lim_{k \rightarrow +\infty} \frac{1}{d_k \cdots d_{l+1} }\log^+\Vert h_k \cdots h_{l+1}\Vert $$
    exists on $V_{R,\epsilon}^+$ by the same proof as above. 
    Thus, the limit of $(u_k(q))$ is well defined and we have $$u(q)=\frac{1}{d^l_\omega}v_l(h^l_\omega(q)).$$
    
    We deduce that $u$ is continuous on the set of points on which $(h^k_\omega(q))$ is unbounded. Indeed, if $q$ is such a point, we can find $l\geq 1$, and a neighborhood of $q$ which is sent by $h^l_\omega$ to $V_{R,\epsilon}^+$. Moreover, there exists $C>0$, such that we have $$u_{k+1} \leq u_k +\frac{C}{d^k_\omega}$$
    on $\mathbb{C}^2$, and this shows that $u$ is a plurisubharmonic function as there exists $(c_k) $ such that the sequence of plurisubharmonic functions $(u_k+c_k)$ is decreasing to $u$. 
    
    It remains to show that $u$ is continuous on the boundary of $\{u=0\}$. Consider a sequence $(q_n )$ that converges to a point $q$ satisfying $u(q)=0$. We may assume $u(q_n) >0$ for every $n$. Denote by $t_n$ the hitting time of $V_{R,\epsilon}^+$ for the orbit $(h^k_\omega(q_n))_{k \geq 1}$.
    Remark that we necessarily have $t_n \rightarrow +\infty.$ We can write
    $$u(q_n)=\frac{1}{d^{t_n}_\omega} v_{t_n}(h^{t_n}_\omega(q_n)).$$
    Moreover, we have $$\sup_{l \geq 1} \sup_{h_l(B(0,R))} v_l < +\infty.$$
    We conclude that $u(q_n) \rightarrow 0$.
\end{proof}

\begin{proof}[Proof of Theorem \ref{thm1}]
We conclude the proof of the existence of $G_\omega$ by showing that the sequence $(G_{n,\omega})$ converges to $u$. We assume first that the normal form of $f^n_\omega$ ends by an affine map. Consider $q \in \mathbb{C}^2$ and choose $R$ large enough so that $\Vert q \Vert \leq R$. As in the proof of Proposition \ref{prop18}, we distinguish two cases. If the orbit $(h^k_\omega (q))$ remains in the bidisk $B(0,R)$, we bound 
   $$|G_{n,\omega}(p) -u_k(q)|=\frac{1}{d^k_\omega}\left|\frac{1}{d^{l(n),n}_\omega} \log^+\Vert h_\omega^{l(n),n}(h^k_\omega(q)) \Vert - \log^+\Vert h^k_\omega(q)\Vert \right|$$
for $n \geq t_k$, using the following:
    $$\sup_{r \geq 1, h_1, \ldots, h_r \in \mathcal{H}} \;\sup_{B(0,R)} \frac{1}{d_r \cdots d_1}\log^+\Vert h_r \cdots h_1 \Vert < +\infty.$$
    Thus, there exists $C>0$ such that 
    $$|G_{n,\omega}(q) -u_k(q)| \leq \frac{C}{d^k_\omega},$$
    and the sequence $(G_{n,\omega}(q))$ is a Cauchy sequence that converges to $u(q)$.
Otherwise, by Proposition \ref{prop6}.(3), there exists a time $l \geq 1$ such that $ h^k_\omega (q)$ is in $ V_{R,\epsilon}^+$ for every $k \geq l$. If $n \geq t_k,$ we have
\begin{align*}
    |G_{n,\omega}(q) -u_k(q)|&=\frac{1}{d^k_\omega}\left|\frac{1}{d^{l(n),n}_\omega} \log^+\Vert h_\omega^{l(n),n}(h^k_\omega(q)) \Vert - \log^+\Vert h^k_\omega(q)\Vert \right|\\
    &\leq \frac{1}{d^k_\omega} \sum_{j=0}^{l(n)-1} \frac{1}{d^{j,n}_\omega} \left|\frac{1}{d_{j+1,n}}\log^+\Vert h_{j+1,n} (q_{j,n})\Vert - \log^+\Vert q_{j,n} \Vert\right|
\end{align*}
where $q_{j,n}:=h_{j,n}\cdots h_{1,n}(h^k_\omega(q))$ is in $ V_{R,\epsilon}^+$.
Using Corollary \ref{cor2}, we obtain
   \begin{align*}
      |G_{n,\omega}(q) -u_k(q)| &\leq \frac{M_\epsilon}{d^k_\omega} \sum_{j = 0}^{l(n)-1} \frac{1}{d^{j,n}_\omega}\\
       &\leq \frac{2M_\epsilon}{d^k_\omega}.
   \end{align*}
This shows that $(G_{n,\omega}(q))$ converges to $u(q)$ and this concludes the proof of existence of $G_\omega$. Notice that the proof works equally well if the normal form of $f^n_\omega$ ends by an elementary map.

If the normal form of $f^n_\omega$ starts by $a \in \mathcal{A}\cup \{1\}$, we thus have shown that $G_\omega$ is positive on the set $$U(a^{-1}(I)):=\bigcup_{0<\epsilon<\epsilon_0} a^{-1}(V_{R_\epsilon,\epsilon}^+).$$ Its closure in $\mathbb{P}^2$ is a neighborhood of $L_\infty \setminus a^{-1}(I)$. This proves the Theorem \ref{thm1}.(2) in which  $F:=\mathcal{A}(I) \cup I$ is the orbit of $I$ under the action of $\mathcal{A} \cup \{1\}$. 
\end{proof}

We conclude by one remark: in the classical setting, the zero locus of $G_f$ coincides with the points that have bounded forward orbit. In the setting of random dynamics, we only have that the zero locus of $G_\omega$ coincides with the points $p \in \mathbb{C}^2$ that have bounded orbit along the sequence $(h^k_\omega)$ which are special points on the geodesic ray $\xi(\omega)$.

\section{Green functions on good models}\label{section4}

In this section, we prove Theorem \ref{thm7} using the following idea. By Theorem \ref{thm1}.(2), the intersection of the closure in $\mathbb{P}^2$ of the zero locus of $G_\omega$ and the line at infinity consists of one point $p(\omega)\in F$. If the points $p(\omega)$ and $p(\omega')$ are different, then we can distinguish the Green functions $G_\omega$ and $G_{\omega'}$. If $p(\omega)=p(\omega')$, we consider the blow-up of this point, and we show that after a finite number of blow-ups, we can find a good model $\pi\colon X \rightarrow \mathbb{P}^2$ on which the closures of the zero loci of $G_\omega$ and $G_{\omega'}$ intersect the divisor at infinity $ X_\infty:=X \setminus \pi^{-1}(\mathbb{C}^2)$ in distinct points. This rely on the study of the link between the properties of the sequence of base points of $f^n_\omega$ and the behaviour of the random walk on the Bass-Serre tree. We will consider elements of $G$ acting as birational maps on models of the projective plane. We conclude this section by the proof of Theorem \ref{thm2}. We refer to \cite{La2} for the basics on birational maps of $\mathbb{P}^2$.

\subsection{Models and resolutions} 
A \textbf{model} of $\mathbb{P}^2$ is a birational morphism $\pi\colon X \rightarrow \mathbb{P}^2$ where $X$ is a smooth projective surface. Any model is obtained by a finite number of blow-ups.  
Recall that a birational map $f$ of $\mathbb{P}^2$ admits a resolution: there exists a model $\pi\colon X \rightarrow \mathbb{P}^2$ such that $\sigma:=f \circ \pi$ is a morphism.
\[
\begin{tikzcd}
    & X \arrow[rd, "\sigma" ] \arrow[ld, "\pi", swap] & \\
    \mathbb{P}^2 \arrow[rr, dashed, "f"] &  &\mathbb{P}^2
\end{tikzcd}
\]
Thus, any birational map can be written as a composition of blow-ups and blow-downs. The \textbf{base points} of $f$ are the points blown-up in a minimal resolution of $f$. Note that $$|\text{base}(f)|=|\text{base}(f^{-1})|$$ for any birational map $f$ of $\mathbb{P}^2$. The number of base points is also subadditive:
$$|\text{base}(g \circ f) | \leq |\text{base}(f)| +|\text{base}(g)|$$
whenever $f, g \in \text{Bir}(\mathbb{P}^2)$. 

\subsection{Birational dynamics of polynomial automorphisms} 
When $f$ is an automorphism of $\mathbb{A}^2_\mathbb{C}$, its minimal resolution has the following structure.

\begin{prop}\label{prop8}
    Let $f \in G$. If $f$ is not affine, then a minimal resolution of $f $ can be decomposed as $$\pi : X=X_r \overset{\pi_r}{\longrightarrow} X_{r-1} \longrightarrow \dots \rightarrow X_1 \overset{\pi_1}{\rightarrow} X_0:=\mathbb{P}^2$$
    where $\pi_1$ is the blow-up of a point $p_1 \in L_\infty$ and, for $i\geq 2$, $\pi_i$ is the blow-up of a point $p_i$ lying on the exceptional divisor $E_{p_{i-1}} \subset X_{i-1}$ of $\pi_{i-1}$. The point $p_1$ is equal to the indeterminacy point $I_f$ of $f$. The strict transform $\pi^{-1}(L_\infty) $ is the first curve to be contracted by $\sigma:=f\circ \pi$, and then, $\sigma$ contracts every exceptional divisor of $\pi$, except the last one $E_{p_r}$ which is sent to $L_\infty$.
\end{prop}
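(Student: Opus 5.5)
Write $f$ in the normal form of Jung's theorem. Up to composing on the left and on the right with affine maps, which are automorphisms of $\mathbb{P}^2$ and do not change the structure of a minimal resolution beyond moving the base points by an automorphism, we may assume
$$f=a_n e_n\cdots a_1 e_1,\qquad a_i\in A\setminus E,\ e_i\in E\setminus A.$$
We argue by induction on the length $n$, first treating a single elementary map and then composing.

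\emph{The case of one elementary map.} Take $e(x,y)=(\alpha x+p(y),\beta y+\delta)$ with $d=\deg p\ge 2$. In homogeneous coordinates,
$$e[x:y:z]=[\alpha xz^{d-1}+\tilde p(y,z):\beta yz^{d-1}+\delta z^d:z^d].$$
Its only indeterminacy point is $I=[1:0:0]$, and it contracts $L_\infty$ to $I$. In the affine chart $x=1$, with coordinates $(y,z)$, we can resolve explicitly. Near $I$, each successive blow-up has a single remaining base point. This point is the intersection of the newest exceptional divisor with the strict transform of the pencil, that is, of the curves $\{\text{linear form}\circ e=0\}$. Consequently each base point lies on the previous exceptional divisor, and the base points form a chain of infinitely near points. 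There are $2d-1$ of them, the number being forced by Noether's equalities $\sum m_i=3(d-1)$ and $\sum m_i^2=d^2-1$.

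\emph{Describing $\sigma=e\circ\pi$.} The strict transform of $L_\infty$ has self-intersection $1-r<0$ in $X$, and $e$ maps it to the point $I$, so $\sigma$ contracts it. Since $\sigma$ is a composition of blow-downs, after contracting it the next curve of self-intersection $-1$ is the adjacent $E_{p_{i}}$, and so on along the chain. The chain is linear, with $L_\infty$ attached at one end and $E_{p_r}$ at the other. The Picard rank count shows that exactly $r$ curves are contracted, which are $\tilde L_\infty$ and $E_{p_1},\dots,E_{p_{r-1}}$. The surviving curve $E_{p_r}$ must then map onto the image line at infinity. Alternatively, this follows by applying the same resolution statement to $e^{-1}$, whose minimal resolution has symmetric structure.

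\emph{The inductive step.} Suppose the proposition holds for $g=a_{n-1}e_{n-1}\cdots a_1e_1$. The map $g$ sends $L_\infty$, minus its indeterminacy, to a point which Lemma \ref{lemme1} shows is $a_{n-1}(I)\ne I$. Its last exceptional divisor $E$ maps onto $L_\infty$. The indeterminacy point $I$ of $a_ne_n$, pulled back through $\sigma_g$, is therefore a single point of $E$ that does not lie on the other curves. Resolving $a_ne_n$ at that point appends a new chain of infinitely near points after $p_r$.

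Concatenating gives a chain of base points. The contraction order is as follows: first $\tilde L_\infty$ and the exceptional curves from $g$ that are contracted, then the old last divisor $E$, which now becomes a contracted curve, and then the new chain. This leaves only the final exceptional divisor, which is sent to $L_\infty$. Minimality follows because the degrees multiply (Lemma \ref{lemme1}), so no base point can be removed.

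The main obstacle is the precise bookkeeping in this inductive step. We must check that the new base point lies on $E$ but not at its intersection with the previously contracted curves, so that the configuration stays a chain and is blown down in the right order. We handle this by computing in local coordinates near the point $I$, using that $a_n^{-1}$ moves the relevant point off $I$.
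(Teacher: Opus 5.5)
Your route differs from the paper's. The paper gives no argument and simply cites Lamy's book (Lemma 7.7), whereas you try a self-contained proof via Jung normal form, an explicit resolution of one elementary map, and induction on length. That route can establish the chain property of the base points. However, your step ``Describing $\sigma=e\circ\pi$'' in the base case rests on false geometry.

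Take $e=(x+y^2,y)$, i.e.\ $e=[xz+y^2:yz:z^2]$. In the chart $x=1$, every member $\lambda_0(z+y^2)+\lambda_1yz+\lambda_2z^2$ of the net is tangent to $L_\infty=\{z=0\}$ at $I$. Hence $p_2=E_{p_1}\cap\tilde L_\infty$, and one more blow-up shows that $p_3$ is a free point of $E_{p_2}$. This has three consequences:
\begin{itemize}
\item $\tilde L_\infty^2=-1$, not $1-r=-2$.
\item The boundary is not a linear chain with $L_\infty$ and $E_{p_r}$ at the ends: $\tilde E_{p_2}$ meets the three curves $\tilde L_\infty$, $\tilde E_{p_1}$ and $E_{p_3}$.
\item $\sigma$ contracts $\tilde L_\infty$, then $E_{p_2}$, then $E_{p_1}$, which is not ``along the chain''.
\end{itemize}
Your self-intersection claim is in fact incompatible with the statement you are proving. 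The first curve blown down by $\sigma$ must be a $(-1)$-curve, while $1-r\le -2$ because $r=2d-1\ge 3$. Also, once $\tilde L_\infty$ is contracted there are two $(-1)$-curves, here $\tilde E_{p_2}$ and $E_{p_3}$. Nothing in your argument explains why $\sigma$ spares $E_{p_r}$.

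The missing ingredient is minimality. It also makes your contraction-order bookkeeping (your ``main obstacle'') unnecessary once the chain property is known. The argument runs as follows:
\begin{itemize}
\item If $\sigma$ contracted the $(-1)$-curve $E_{p_r}$, it would factor through $X_{r-1}$, contradicting minimality.
\item Since $\sigma$ restricts to the automorphism $f$ on $\pi^{-1}(\mathbb{C}^2)$, the $r=\rho(X)-1$ curves it contracts all lie among the $r+1$ boundary components.
\item Hence $\sigma$ contracts every boundary component except $E_{p_r}$, which goes onto $L_\infty$.
\item Since $p_{i+1}\in E_{p_i}$, one has $\tilde E_{p_i}^2\le -2$ for $i<r$. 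So the first curve blown down can only be $\tilde L_\infty$.
\end{itemize}

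The same reasoning should replace your minimality claim in the inductive step. ``Degrees multiply'' does not control the number of base points. For example, $(x+y^4,y)$ has $7$ base points, while a product of two quadratic H\'enon maps has degree $4$ and $6$ base points (Proposition \ref{prop7}). What works is that the last exceptional divisor is the only $\pi$-exceptional $(-1)$-curve and it is not contracted.

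Two smaller points. First, Noether's equalities give $r=2d-1$ only after you compute $m_1=d-1$; degree $4$, for instance, also admits the type $(4;2^3,1^3)$. Second, to know that $\sigma_g^{-1}(I)$ lies on $E$ alone, you need every $\sigma_g$-exceptional curve, not just $\tilde L_\infty$, to map to $a_{n-1}(I)$. This holds because their image is the one-point indeterminacy locus of $g^{-1}$.
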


\begin{proof}
    See \cite[Lemma 7.7]{La2}.
\end{proof}

The next proposition shows that $\pi$ determines $\sigma$ up to an affine automorphism and will be a key ingeredient for the proof of Theorem \ref{thm7}.

\begin{prop}\label{prop9}
        Let $f,g \in G$ be elements with the same base points. Then, there exists an affine automorphism $a \in A$ such that $f=ag$. 
\end{prop}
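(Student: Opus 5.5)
We prove Proposition~\ref{prop9} by comparing minimal resolutions: identical base points give a common resolving model, and the two resulting morphisms differ by an automorphism of $\mathbb{P}^2$ preserving $L_\infty$. If $f$ is affine, it has no base points, so $g$ has none either. Then both extend to automorphisms of $\mathbb{P}^2$ preserving $L_\infty$, hence lie in $A$, and $a:=fg^{-1}$ is affine. Assume from now on that $f$ and $g$ are not affine.

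Since $f$ and $g$ have the same base points, Proposition~\ref{prop8} shows their minimal resolutions are the same tower of blow-ups $\pi\colon X\to\mathbb{P}^2$; this is a common minimal resolution. Set $\sigma_f:=f\circ\pi$ and $\sigma_g:=g\circ\pi$; both are birational morphisms $X\to\mathbb{P}^2$. By Proposition~\ref{prop8}, each of them contracts exactly the same curves: the strict transform of $L_\infty$ and the exceptional divisors $E_{p_1},\dots,E_{p_{r-1}}$. Each sends $E_{p_r}$ onto $L_\infty$ and is an isomorphism from $\pi^{-1}(\mathbb{C}^2)$ onto $\mathbb{C}^2$.

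Consider the birational map $a:=\sigma_f\circ\sigma_g^{-1}=f\circ g^{-1}$ of $\mathbb{P}^2$. We show it has no base points. The morphisms $\sigma_f$ and $\sigma_g$ have the same exceptional locus. By Zariski's main theorem, a birational morphism between smooth projective surfaces is a composition of blow-downs of $(-1)$-curves, and it is determined up to an automorphism of the target by the set of curves it contracts. Concretely, $\sigma_f$ factors through the contraction of its exceptional curves, and the variety obtained by contracting them is unique up to isomorphism. More elementarily: $\sigma_f\circ\sigma_g^{-1}$ has no indeterminacy, since any indeterminacy point $q$ of it would be a point where $\sigma_g^{-1}$ blows up $q$ to a curve $C\subset X$. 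Such a $C$ is contracted by $\sigma_g$, hence also by $\sigma_f$, so the composition maps a neighborhood of $q$ to a connected set that collapses to a point. Zariski's main theorem, applied on the normal surface $\mathbb{P}^2$, then shows the map is defined at $q$. Applying the same argument symmetrically, $a^{-1}$ is regular as well, so $a\in\mathrm{Aut}(\mathbb{P}^2)=\mathrm{PGL}_3(\mathbb{C})$.

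Finally, $a$ maps $\mathbb{C}^2=\sigma_g(\pi^{-1}(\mathbb{C}^2))$ onto $\sigma_f(\pi^{-1}(\mathbb{C}^2))=\mathbb{C}^2$. Hence $a$ preserves $L_\infty$, so $a\in A$, and $f=ag$.

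The main obstacle is the regularity step in the third paragraph: justifying rigorously that two birational morphisms from $X$ with identical exceptional curves differ by a biregular map. I would first try the Zariski's main theorem argument given above. The fallback is an explicit route: compare the linear systems $\sigma_f^*|\mathcal{O}(1)|$ and $\sigma_g^*|\mathcal{O}(1)|$. These are both complete linear systems of degree $\deg f=\deg g$ with the same assigned base points and multiplicities, the multiplicities being read off the tower given by Proposition~\ref{prop8}. Equal complete linear systems give maps that differ by a linear change of coordinates, i.e.\ by an element of $\mathrm{PGL}_3(\mathbb{C})$.
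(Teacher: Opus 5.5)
Your proposal is correct and follows the paper's argument. You take the common minimal resolution $\pi$, use Proposition~\ref{prop8} to see that $f\circ\pi$ and $g\circ\pi$ contract exactly the same curves, conclude that $f\circ g^{-1}$ is an automorphism of $\mathbb{P}^2$ preserving $L_\infty$, and deduce that it is affine. The only difference is how you justify that such a contraction is unique up to $\mathrm{Aut}(\mathbb{P}^2)$: the paper argues that at each blow-down step the $(-1)$-curve to contract is forced because $E_{p_r}$ is never contracted, whereas you apply Zariski's main theorem (the connected fibre $\sigma_g^{-1}(q)$ is collapsed to a point by $\sigma_f$), which is a more self-contained way of making the same point.
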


    \begin{proof}
    Let $\pi\colon X \rightarrow \mathbb{P}^2$ be a minimal resolution of $f$. 
    By Proposition \ref{prop8}, the first $(-1)$-curve contracted by $\sigma=f \circ \pi$ is the strict transform $\pi^{-1}(L_\infty)$. Moreover, if we consider a decomposition of $\sigma$ in a product of blow-downs, we see that at each step there are exactly two $(-1)$-curves, except for the last step where there is only one. As the exceptional divisor $E_{p_r}$ is never contracted, the only choice for $\sigma$ is the image in $\mathbb{P}^2$ of the last $(-1)$-curve contracted. Thus, $\pi$ determines $\sigma$ up to an automorphism of $\mathbb{P}^2$. In other words, if $\sigma'=g \circ \pi$, we have that $\sigma' \circ \sigma^{-1}$ is an automorphism of $\mathbb{P}^2$ that fixes $L_\infty$, and we obtain $\sigma' \circ \sigma^{-1} \in A$.
    \end{proof}

In presence of algebraic stability, the sequence of base points behaves well, as shows the following proposition.

\begin{prop}\label{prop7}
        Let $f,g\in G$ be such that $I_g \neq I_{f^{-1}}$. We have $$|\emph{base}(g\circ f)|=|\emph{base}(f)|+|\emph{base}(g)|$$
        and the sequence of base points of $g\circ f$ begins by the base points of $f$.
\end{prop}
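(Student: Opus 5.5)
The plan is to build a resolution of $g\circ f$ directly from minimal resolutions of $f$ and $g$, and then check that no blow-up in it can be removed. If $f$ or $g$ is affine, the statement is immediate, since affine maps have no base points and are automorphisms of $\mathbb{P}^2$. So assume both are non-affine.

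\emph{Step 1: construct a resolution.} Let $\pi_f\colon X_f\to\mathbb{P}^2$ be a minimal resolution of $f$, with $\sigma_f=f\circ\pi_f$. Let $\pi_g\colon X_g\to\mathbb{P}^2$ be a minimal resolution of $g$. Proposition \ref{prop8} applied to $f$ shows that $\sigma_f$ is a morphism contracting every curve of $X_f$ lying over $L_\infty$ except the last exceptional divisor $E_{p_r}$, which it maps isomorphically onto $L_\infty$. It is an isomorphism over $\mathbb{C}^2$. Proposition \ref{prop8} also shows that the base points of $g$ form a chain: they start at $I_g\in L_\infty$ and each subsequent one lies on the previous exceptional divisor.

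Now $\sigma_f$ is a local isomorphism near every point of $E_{p_r}$ that does not lie on the other exceptional curves. The point of $L_\infty$ onto which $\sigma_f$ contracts the remaining curves is $I_{f^{-1}}$, since the contracted part $\pi_f^{-1}(L_\infty)\cup E_{p_1}\cup\dots\cup E_{p_{r-1}}$ is the total transform of the indeterminacy point of $f^{-1}$. Because $I_g\neq I_{f^{-1}}$, the unique point $q\in E_{p_r}$ with $\sigma_f(q)=I_g$ is a point where $\sigma_f$ is a local isomorphism. Blowing up at $q$ the points corresponding to the base points of $g$, transported via $\sigma_f$, gives $Y\to X_f\to\mathbb{P}^2$ on which $g\circ f$ becomes a morphism. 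This produces a resolution with $|\mathrm{base}(f)|+|\mathrm{base}(g)|$ blow-ups, and its first blow-ups are those of $f$. Together with subadditivity, this shows the resolution has the stated length once minimality is checked.

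\emph{Step 2: minimality, which is the main obstacle.} It remains to show the resolution is minimal, that is, that the sequence of base points of $g\circ f$ is exactly this one. I would argue through the Picard number, or equivalently the degree. The number of base points of a polynomial automorphism is governed by its degree. Since $I_g\neq I_{f^{-1}}$, Lemma \ref{lemme1}-style reasoning (the normal form of $g\circ f$ has no cancellation, so the degrees multiply) gives $\deg(g\circ f)=\deg f\deg g$.

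Alternatively, one can check that no $(-1)$-curve can be contracted while keeping the map a morphism. The only $(-1)$-curves in $Y$ over $L_\infty$ are the last exceptional curve over $q$ and the strict transform of $L_\infty$. Contracting the former breaks the resolution of $g$ at $q$, by minimality of $\pi_g$. Contracting the latter is impossible because the blow-ups of $f$ are needed, by minimality of $\pi_f$ near $p_1=I_f$, away from $q$. Either argument yields equality in the subadditivity inequality, and shows that the base points of $g\circ f$ begin with those of $f$.
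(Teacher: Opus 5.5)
Your Step 1 is correct, and the second version of Step 2 (the $(-1)$-curve check) completes a valid proof. This is a different route from the paper, which gives no argument and simply cites Lamy's book for the statement. Your argument needs only Proposition \ref{prop8} and the standard criterion that a resolution $(\pi,\sigma)$ is minimal if and only if no $(-1)$-curve is contracted by both $\pi$ and $\sigma$. It also exhibits the minimal resolution of $g\circ f$ explicitly: the chain $p_1,\dots,p_r$ of $f$, followed by the chain of $g$ transported to $q=\sigma_f^{-1}(I_g)\in E_{p_r}$. This is exactly the structure exploited later in Propositions \ref{prop10} and \ref{prop11}.

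You should, however, discard the degree argument, because it is false that the number of base points of a polynomial automorphism is determined by its degree. The map $(y,y^4-x)$ has degree $4$ and $7$ base points, with multiplicities $3,1,1,1,1,1,1$. The composition $(y^2-x,(y^2-x)^2-y)$ of two quadratic Hénon maps also has degree $4$. Its linear system has multiplicity $2$ at $[1:0:0]$, so the Noether equations force multiplicities $2,2,2,1,1,1$, i.e. only $6$ base points. Hence $\deg(g\circ f)=\deg(f)\deg(g)$ yields nothing unless you already know that $|\mathrm{base}|$ equals $\sum_i(2d_i-1)$ over the multidegree, and that is essentially the statement itself.

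You should also tighten the $(-1)$-curve check as follows.
\begin{enumerate}
\item Only curves contracted by $\pi_Y$ matter: the strict transforms of $E_{p_1},\dots,E_{p_r}$ and of the curves $F_1,\dots,F_s$ created over $q$.
\item Each of $E_{p_1},\dots,E_{p_{r-1}}$ and $F_1,\dots,F_{s-1}$ contains the next center, and $E_{p_r}$ contains $q$. So all of them have self-intersection at most $-2$, and only $F_s$ is a $(-1)$-curve.
\item Near $F_s$, $\sigma_Y$ is $\sigma_g$ composed with the local isomorphism induced by $\sigma_f$ around $q$. By Proposition \ref{prop8}, $\sigma_g$ maps its last exceptional divisor onto $L_\infty$, so $F_s$ is not contracted by $\sigma_Y$.
\item The strict transform of $L_\infty$ is irrelevant simply because $\pi_Y$ does not contract it. Your appeal to ``minimality of $\pi_f$'' is not the right reason there.
\end{enumerate}
With these fixes, no $(-1)$-curve is contracted by both maps, so the resolution is minimal and both claims follow.
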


\begin{proof}
    See \cite[Proposition 4.19]{La2}.
\end{proof}
    
\subsection{Base points of random products}
Let $\mu $ be a finitely supported, non-elementary and purely loxodromic measure on $G$. We show that, for any $l\geq 1$, the  first $l$ points in the sequence of base points of $f^n_\omega$ do not depend on $n$ when $n$ is large enough.

\begin{prop}\label{prop10}
    For $\mu^\mathbb{N}$-almost every $\omega \in \Omega$, there exists a sequence $(p_l(\omega))_{l \geq 1}$ with $p_1(\omega) \in X_0:=\mathbb{P}^2$, and $p_l(\omega) $ is lying on the exceptional divisor $E_{l-1}$ of the blow-up of $p_{l-1}(\omega)$, satisfying the following: for every $l\geq 1$, there exists $\tau_l(\omega) \geq 1$, such that the first base points of $f^n_\omega$ are $p_1(\omega), \dots, p_l(\omega)$ when $n\geq \tau_l(\omega)$.  
\end{prop}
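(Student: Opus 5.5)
Combine the stabilized normal form of Proposition \ref{prop17} with the additivity of base points under algebraic stability, Proposition \ref{prop7}. Fix $\omega$ in the full-measure set where Proposition \ref{prop17} holds, with sequences $(t_k)$, $(a_k)$, $(e_k)$. As in Section \ref{section3.2}, write $h_k=a_ke_k\in\mathcal{H}$ and $h^k_\omega=h_k\cdots h_1$. For $n\ge t_k$ this gives
$$f^n_\omega=h^{l(n),n}_\omega\, h^k_\omega,$$
possibly up to an extra affine factor on the right (when the normal form ends with an affine map; absorb it by writing $f^n_\omega=g_n\circ h^k_\omega\circ a_0$ with $a_0$ fixed) and an affine factor on the left. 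Left affine factors do not change base points, since the base points of $af$ and $f$ coincide for $a\in A$. A fixed right affine factor $a_0$ only transports base points by the automorphism $a_0^{-1}$ of $\mathbb{P}^2$ and its lifts to the blow-ups, so it is harmless.

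The key step is to show that the base points of $f^n_\omega$ begin with those of $h^k_\omega$. We apply Proposition \ref{prop7} with $f=h^k_\omega$ and $g=h^{l(n),n}_\omega$, or rather with $g$ the remaining left factor. By Proposition \ref{prop8}, $(h^k_\omega)^{-1}=e_1^{-1}a_1^{-1}\cdots e_k^{-1}a_k^{-1}$ has indeterminacy point $a_k(I)$. Indeed, $a_k^{-1}$ is an automorphism of $\mathbb{P}^2$, and $e_k^{-1}$ has indeterminacy point $I$; the remaining factors are composed after, and by the stability argument of Lemma \ref{lemme1} they do not create new indeterminacy on $L_\infty$. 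The left factor $g$ has normal form starting on the right with some $e_{1,n}$, or with an affine map followed by an elementary one. In the first case $I_g=I\neq a_k(I)$, since $a_k\in A\setminus E$ does not fix $I$. In the second case the concatenated word would not be reduced, which contradicts the normal form of Proposition \ref{prop17}; one should check this case carefully and reduce to the first. Proposition \ref{prop7} then gives that the base points of $f^n_\omega$ start with the full list of base points of $h^k_\omega$, for every $n\ge t_k$.

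Next, the base point sequences of the $h^k_\omega$ are nested and grow. Applying Proposition \ref{prop7} again to $h^{k+1}_\omega=h_{k+1}\circ h^k_\omega$, which is legitimate since $I_{h_{k+1}}=I\neq a_k(I)=I_{(h^k_\omega)^{-1}}$, the base points of $h^{k+1}_\omega$ extend those of $h^k_\omega$. Their number is $|\mathrm{base}(h^k_\omega)|=\sum_{i\le k}|\mathrm{base}(h_i)|\ge k$, since each $h_i$ is non-affine. Hence there is a single infinite sequence $(p_l(\omega))_{l\ge1}$ whose initial segments are the base point lists of the $h^k_\omega$. By Proposition \ref{prop8} applied to $h^k_\omega$, $p_1(\omega)\in L_\infty$ and each $p_l(\omega)$ lies on the exceptional divisor of the blow-up of $p_{l-1}(\omega)$. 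Given $l$, choose $k\ge l$ and set $\tau_l(\omega):=t_k$; then for $n\ge\tau_l(\omega)$ the first $l$ base points of $f^n_\omega$ are $p_1(\omega),\dots,p_l(\omega)$.

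The main obstacle is the bookkeeping of the affine factors at both ends of the normal form, together with verifying the hypothesis $I_g\neq I_{f^{-1}}$ in each case. In particular one must confirm that the indeterminacy point of $(h^k_\omega)^{-1}$ is exactly $a_k(I)$ and that the first letter of $g$ is elementary. This is where the reducedness of the normal form in Proposition \ref{prop17} is used. Only Proposition \ref{prop16}, not the finiteness of $\mathcal{H}$, is really needed here.
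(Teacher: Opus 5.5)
Your proof is correct and follows the paper's argument. The paper likewise writes $f^n_\omega=h^{l(n),n}_\omega h^k_\omega$ (up to affine factors) via Proposition \ref{prop17} and applies Proposition \ref{prop7} with $I_{h^{l(n),n}_\omega}=I\neq a_k(I)=I_{(h^k_\omega)^{-1}}$, so that the base points of $f^n_\omega$ begin with those of $h^k_\omega$ for $n\ge t_k$. Your extra bookkeeping makes explicit what the paper leaves implicit: the affine factors at both ends, the fact that the base-point lists of the $h^k_\omega$ are nested with length at least $k$, and the remark that the finiteness of $\mathcal{H}$ is not needed.
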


\begin{proof}
    We can assume that the normal form of $f^n_\omega$ starts by an elementary map and we write $f^n_\omega$ as a product of elements of $\mathcal{H}$ as in Section \ref{section3.2}, $$f^n_\omega=h^{l(n),n}_\omega h_\omega^k$$
    for $n\geq t_k(\omega)$, up to multiplication on the left by an affine map. 
    By Lemma \ref{lemme1}, for any $h_1,h_2 \in \mathcal{H}$, we have $I_{h_1}
     \neq I_{h_2^{-1}}$. Thus, we can apply Proposition \ref{prop7} and we obtain that the sequence of base points of $f^n_\omega$ begins with the sequence of base points of $h_\omega^k$ which does not depend on $n \geq t_k$.
\end{proof}

In particular, the indeterminacy point of $f^n_\omega$ on $\mathbb{P}^2$ is constant for $n \geq \tau_1(\omega)$ and is equal to the point $p_1(\omega)$ given by Theorem \ref{thm1}.(2).

\subsection{Dynamics on good models}

We now study the dynamics of $f^n_\omega$ in a model obtained by blowing-up the points $p_1(\omega),\dots, p_{l}(\omega)$. If $\pi\colon X \rightarrow \mathbb{P}^2$ is a model, we denote by $X_\infty:=X \setminus \pi^{-1}(\mathbb{C}^2)$ the union of the exceptional divisors of $\pi$ and the strict transform $\pi^{-1}(L_\infty)$.

\begin{prop}\label{prop11}
    Let $l\geq 1$. Consider the model $\pi_{l}:X_{l} \rightarrow \mathbb{P}^2$ obtained by the blow-up of the points $p_1(\omega), \dots, p_{l}(\omega)$. Then, the closure of the zero locus of $G_\omega$ in $X_l$ intersects the divisor at infinity in the point $p_{l+1}(\omega)$, that is,  $$\overline{\{G_\omega=0\}} \cap  X_{l, \infty}=p_{l+1}(\omega).$$
\end{prop}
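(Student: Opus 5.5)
Fix $\omega$ in the full-measure set where Proposition~\ref{prop17}, Theorem~\ref{thm1} and Proposition~\ref{prop10} hold, and fix $l\geq 1$. The plan is to transport the statement of Theorem~\ref{thm1}.(2) from $\mathbb{P}^2$ to $X_l$ by using a long initial segment of the random product as a change of coordinates.

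\textbf{Choice of the conjugating map.} Choose $k$ large enough that $h^k_\omega$ has at least $l+1$ base points; this is possible since $|\mathrm{base}(h^k_\omega)|=\sum_{i\le k}|\mathrm{base}(h_i)|\to\infty$ by Proposition~\ref{prop7}. For $n\geq t_k$ write $f^n_\omega = h^{l(n),n}_\omega h^k_\omega$, up to an affine factor on the left, which does not affect base points or the zero locus. The cocycle relation
\[
G_\omega = \frac{1}{d^k_\omega}\, G_{\sigma^k\omega}'\circ h^k_\omega
\]
follows from the convergence in the proof of Theorem~\ref{thm1}. Here $G'$ is the limit of $\frac{1}{d^{l(n),n}_\omega}\log^+\|h^{l(n),n}_\omega\|$, which exists by the same proof. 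Hence
\[
\{G_\omega=0\}=(h^k_\omega)^{-1}\bigl(\{G'=0\}\bigr).
\]
By Proposition~\ref{prop18} applied to the tail sequence, $\{G'=0\}$ is disjoint from $V^+_{R,\epsilon}$. Its closure in $\mathbb{P}^2$ therefore meets $L_\infty$ only at $I$, and it stays inside $V^-_{R,\epsilon}\cup B(0,R)$ for every small $\epsilon$.

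\textbf{Lifting to $X_l$.} Let $\pi_{k}\colon Y\to\mathbb{P}^2$ be the minimal resolution of $h^k_\omega$, with $\sigma=h^k_\omega\circ\pi_k$. By Proposition~\ref{prop10}, its first $l$ blow-ups are those at $p_1(\omega),\dots,p_l(\omega)$, so $Y\to X_l\to\mathbb{P}^2$. By Proposition~\ref{prop8}, $\sigma$ contracts every component of $Y_\infty$ except the last exceptional divisor $E_{p_r}$, which it maps isomorphically onto $L_\infty$. Consequently $\sigma^{-1}$ maps a neighbourhood of $L_\infty\setminus I$ into a neighbourhood of $E_{p_r}$ minus one point, and maps points near $I$ (in $V^-$) into a neighbourhood of the union of the remaining components of $Y_\infty$ together with the point $E_{p_r}\cap \sigma^{-1}(I)$.

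The main obstacle is to show that the closure of $\{G_\omega=0\}$ in $Y$ meets $Y_\infty$ only along the chain of points lying over $p_{l+1}(\omega)$. One first checks that $\sigma^{-1}(I)$ is the point on $E_{p_r}$ where the next base point of $h^{k+1}_\omega$ lies; this holds because the next factor $h_{k+1}$ has indeterminacy point $I$, by Proposition~\ref{prop7}. Then I would argue by analysing sequences $q_j\to\infty$ with $G_\omega(q_j)=0$. The images $h^k_\omega(q_j)$ tend to $I$ inside $V^-$, so in $Y$ they accumulate only at points of $\sigma^{-1}(I)$.

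This fibre is a connected union consisting of contracted curves and the single point on $E_{p_r}$. Its image in $X_l$ is the image of the divisors over $p_{l+1}$, because these are exactly the curves contracted by $Y\to X_l$. One must exclude the strict transform of $L_\infty$ and the divisors $E_{p_1},\dots,E_{p_l}$ away from $p_{l+1}$. To do this, use that those curves are sent by $\sigma$ to points different from $I$: by Proposition~\ref{prop8} and the contraction order, they are blown down before the divisors over $p_{l+1}$ and land on $L_\infty\setminus I$, where $G'>0$ by Proposition~\ref{prop18}. Continuity of $G'$ then gives positivity of $G_\omega$ near those curves.

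Pushing down to $X_l$ yields
\[
\overline{\{G_\omega=0\}}\cap X_{l,\infty}\subset\{p_{l+1}(\omega)\}.
\]
Nonemptiness follows because $\{G_\omega=0\}$ is unbounded, which is the remark after Theorem~\ref{thm1}.
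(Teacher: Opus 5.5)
Your reduction is correct: the cocycle relation, $\overline{\{G'=0\}}\cap L_\infty=\{I\}$, and properness of $\sigma$ forcing accumulation points into $\sigma^{-1}(I)$ all hold. The error is in your description of $\sigma=h^k_\omega\circ\pi_k$, which swaps the roles of $I$ and $h^k_\omega(L_\infty)$.

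Since $\sigma$ is also a minimal resolution of $(h^k_\omega)^{-1}$, Proposition \ref{prop8} applied to $(h^k_\omega)^{-1}$ shows the following. Every curve contracted by $\sigma$ is sent to the single point $I_{(h^k_\omega)^{-1}}=h^k_\omega(L_\infty)=a_k(I)$. This covers the strict transform of $L_\infty$ and all of $E_{p_1},\dots,E_{p_{r-1}}$, including those lying over $p_{l+1}(\omega)$. That point differs from $I$, which is the same fact you used to apply Proposition \ref{prop7}. Hence $\sigma$ is an isomorphism above $I$, and $\sigma^{-1}(I)$ is a single point of $E_{p_r}$, as you say in your sentence about the next base point.

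So two of your claims are false. First, $\sigma^{-1}$ does not map a neighbourhood of $L_\infty\setminus I$ near $E_{p_r}$ minus a point, because that neighbourhood contains $a_k(I)$. Second, the fibre over $I$ is not ``a connected union consisting of contracted curves and the single point on $E_{p_r}$''. It is a neighbourhood of $a_k(I)$, not of $I$, whose preimage accumulates on the contracted curves. As a result, your exclusion of $\tilde L_\infty, E_{p_1},\dots,E_{p_l}$ rests on an invalid argument. The order in which curves are blown down does not determine where they land, and in your own picture nothing would stop these curves from landing at $I$ as well.

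The repair is exactly the paper's key observation, and it removes your ``main obstacle''. Every accumulation point in $Y$ is the one point $\sigma^{-1}(I)\in E_{p_r}$. Since $r\geq l+1$, $E_{p_r}$ is contracted by $Y\to X_l$ to $p_{l+1}(\omega)$, so nothing needs to be excluded.

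With that correction, your proof is the sequential, contrapositive form of the paper's argument. The paper argues directly: each $q\in X_{l,\infty}\setminus p_{l+1}(\omega)$ lifts to a curve contracted to $a_k(I)\neq I$. So a neighbourhood of $q$ is sent by $h^k_\omega\circ\pi_l$ into $V^+_{R,\epsilon}$, where the tail Green function is positive. This produces explicit open sets on which $G_\omega>0$, and these are reused in Proposition \ref{prop13}. Your version additionally addresses nonemptiness of the intersection, via unboundedness of $\{G_\omega=0\}$, which the paper's proof leaves to the remark after Theorem \ref{thm1}.
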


\begin{proof}
    We can assume $p_1(\omega)=I$, that is, the normal form of $f^n_\omega $ starts with an elementary map, and we write $$f^n_\omega= h^{l(n),n}_\omega h_\omega^k$$
    for $n\geq t_k$, up to multiplication by an affine map on the left. We choose  $k$ large enough, so that $p_{l+1}(\omega)$ is a base point of $h^k_\omega$.
    Denote by $\pi_k : X_k \rightarrow \mathbb{P}^2$ a minimal resolution of $h^k_\omega$. We can write $\pi_k= \pi_l \circ \mu$ where $\mu : X_k \rightarrow X_l$ is a birational morphism that contracts at least one curve.
    By Proposition \ref{prop8}, the birational map $\sigma_k:=h^k_\omega \circ \pi_k$ contracts every exceptional divisor of $\pi_k$ except the last one to the point $h^k_\omega(I)$ which is distinct from $I$. Thus, any point $q \in X_{l,\infty} \setminus p_{l+1}(\omega)$ has a neighborhood $V_{l}(q)$ which is sent by $h^k_\omega \circ \pi_{l}$  into $V_{R,\epsilon}^+$. We obtain that for all $x \in V_l(q)$,  $$G_\omega(\pi_{l}(x))>0$$
    and this concludes the proof.
\end{proof}

The next proposition, combined with Proposition \ref{prop11} concludes the proof of Theorem \ref{thm7}.

\begin{prop}\label{prop12}
    The sequences of base points $(p_l(\omega))_l $ and $(p_l(\omega'))_l$ are different for $\mu^\mathbb{N}\otimes \mu^\mathbb{N}$-almost every $(\omega,\omega')$.
\end{prop}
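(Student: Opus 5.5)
Our plan is to argue by contradiction and reduce the statement to a property of the random walk on the Bass--Serre tree. Suppose the event $\{(\omega,\omega') : p_l(\omega)=p_l(\omega') \text{ for all } l\}$ has positive $\mu^\mathbb{N}\otimes\mu^\mathbb{N}$-measure. By Fubini, there is then a set of $\omega'$ of positive measure such that, for each such $\omega'$, the fiber $\{\omega : (p_l(\omega))_l=(p_l(\omega'))_l\}$ has positive $\mu^\mathbb{N}$-measure. Hence the law of the random infinite sequence of base points $\beta(\omega):=(p_l(\omega))_{l\ge1}$ has an atom: some fixed sequence $\beta_0$ satisfies $\mu^\mathbb{N}(\beta(\omega)=\beta_0)=c>0$. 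The task is therefore to prove that the law of $\beta$ is non-atomic.

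\textbf{Step 1.} We relate $\beta(\omega)$ to the normal forms. By the proof of Proposition \ref{prop10}, the base points of $h^k_\omega$ form an initial segment of $\beta(\omega)$, and its length is $|\mathrm{base}(h^k_\omega)|=\sum_{i\le k}|\mathrm{base}(h_i)|$, which tends to infinity. This uses Proposition \ref{prop7} and the fact that $|\mathrm{base}(h)|\ge1$ for $h\in\mathcal{H}$. By Proposition \ref{prop9}, the base points of $h^k_\omega$ determine $h^k_\omega$ up to a left affine factor, and so they determine the vertex $(h^k_\omega)^{-1}A$ of $\mathcal{T}$. These vertices are exactly the vertices of the geodesic ray $\xi(\omega)$ of the reflected walk used in Proposition \ref{prop16}, up to the bounded affine ambiguity coming from $\mathcal{A}$. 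Consequently $\beta(\omega)$ determines the limit point $\xi(\omega)\in\partial\mathcal{T}$ of the walk with law $\check\mu$. More precisely, $\xi$ factors through $\beta$ up to the finitely many choices of the starting affine element in $\mathcal{A}\cup\{1\}$.

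\textbf{Step 2.} An atom of $\beta$ at $\beta_0$ would give an atom of the hitting measure $\eta$: the set $\{\xi(\omega)\}$ arising from $\beta(\omega)=\beta_0$ is finite, of cardinality at most $|\mathcal{A}|+1$. But Theorem \ref{prop3}(3) says that the hitting measure is non-atomic; this applies to $\check\mu$, which is finitely supported and non-elementary. This contradiction proves the proposition. Alternatively, one could avoid Step 1's identification with $\xi$ and use Corollary \ref{cor3}. The event $\beta(\omega)=\beta_0$ forces $f^{t_k}_\omega$ to lie, up to the subgroup structure, in a set $A_n$ of elements whose normal form begins with the prescribed word $h^k$. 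One would then need to bound $|A_n|$ subexponentially. This route is less clean, so we take the non-atomicity route.

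\textbf{Main obstacle.} The delicate point is making Step 1 rigorous: showing that the base-point sequence genuinely recovers the boundary point $\xi(\omega)$. Proposition \ref{prop9} gives injectivity only modulo $A$, while the tree vertices are cosets $gA$ or $gE$, and the relevant ray belongs to the inverse walk. I would carefully check that the coset $(h^k_\omega)^{-1}A$, or equivalently the edge $(h^k_\omega)^{-1}S$, is determined once $h^k_\omega$ is known up to left multiplication by $A$. I would also check that these vertices converge to $\xi(\omega)$, using the normal form of Proposition \ref{prop17} read backwards. Finally, the case distinction on whether the normal form starts with an affine map needs to be absorbed into the finite ambiguity coming from $\mathcal{A}$.
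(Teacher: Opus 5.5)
\textbf{Gap in Step~1: where to cut $\beta_0$.} Your reduction to an atom $\beta_0$ of the law of $\beta$ is fine, and so is the idea of pushing that atom to an atom of the hitting measure $\eta$ of $\check\mu$. This is a different route from the paper's, which confines $r^n_\omega$ to sets of size $Bn|\mathcal{H}|^{C\log n}$ using logarithmic geodesic tracking and then contradicts positive asymptotic entropy via Corollary~\ref{cor3}.

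However, Step~1 has a real gap, and it is not among the points you list as the main obstacle. Proposition~\ref{prop9} recovers the vertex $(h^k_\omega)^{-1}A$ from the initial segment of $\beta(\omega)$ of length $N_k(\omega)=\sum_{i\le k}|\mathrm{base}(h_i)|$. That length depends on the degrees of the factors $h_i$, hence on $\omega$, not on $\beta(\omega)$. Knowing only the infinite sequence $\beta_0$, you do not know where the blocks $\mathrm{base}(h_i)$ end. So two itineraries with $\beta(\omega)=\beta(\omega')=\beta_0$ but different cut sequences are never compared by your argument. The claim that $\xi$ factors through $\beta$ up to $|\mathcal{A}|+1$ choices is therefore unjustified. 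The paper meets exactly this ambiguity and absorbs it by pigeonholing over the at most $Bn$ possible values of $|\mathrm{base}(h^{k(n)}_\omega)|$.

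\textbf{A counting fix.} You can close the gap without analysing infinitely near points. Set $B=\max_{h\in\mathcal{H}}|\mathrm{base}(h)|$.
For each $N$, the first $N$ points of $\beta_0$ are the base points of at most one element of $G$ up to a left affine factor, by Proposition~\ref{prop9}. Hence they determine at most one vertex $v_N$.
For $\omega$ with $\beta(\omega)=\beta_0$, the ray to $\xi(\omega)$ passes through $v_{N_k(\omega)}$ at distance $2k$ from $A$, with $k\le N_k(\omega)\le Bk$.
Suppose $m$ distinct boundary points arise this way. They branch apart within some radius $R_0$. Beyond $R_0$, their marked vertices at distances $2k'\le 2k$ are pairwise distinct. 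This gives at least $m(k-R_0/2-1)$ distinct vertices, all in $\{v_N : N\le Bk\}$, a set of size at most $Bk$. Letting $k\to\infty$ gives $m\le B$.
So $\{\xi(\omega):\beta(\omega)=\beta_0\}$ is finite, and some point of it has $\eta$-mass at least $c/B$. This contradicts the non-atomicity in Theorem~\ref{prop3}(3) applied to $\check\mu$.

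With this repair your proof uses only the non-atomicity of $\eta$: no asymptotic entropy, no logarithmic tracking, no WPD. That makes it lighter than the paper's argument.
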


\begin{proof}
    Consider a sequence $(p_l)_{l \geq 1}$, and suppose that the set $$R:= \{\omega \in \Omega \, | \;\forall l \geq 1, \, p_l(\omega)=p_l  \} $$
    has positive measure $\mu^ \mathbb{N} (R) >0$.
    We can assume $p_1=I$. The idea is to use Proposition \ref{prop9} to show that the walk at time $n$ is contained in a set of small cardinality, which contradicts the non-triviality of the Poisson boundary. Consider the right walk of reflected law $\check \mu$. Applying normal forms for right products, we can write $$r^n_\omega =h_{\omega}^{k(n)} h^{l(n),n}_\omega, $$
    up to multiplication by an affine map on the right. Here, $k(n)$ is the largest integer $k$ such that $n \geq  t_k$. We can read $l(n)$ as the distance from the walk to its geodesic ray, that is, $$l(n) =2d(r^n_\omega, \xi(\omega)).$$
    By \cite[Theorem 1.3]{MT1}, the random walk on the Bass-Serre tree satisfy logarithmic geodesic tracking, that is, up to consider a subset of positive measure of $R$, there exists $C\geq 1$ such that $$l(n) \leq C \log(n)$$
    for every $\omega \in R$.

    By subadditivity of the number of base points, there exists $B\geq 1$, such that $$|\text{base}(h^{k(n)}_\omega)| \leq Bn $$
    for every $\omega \in R$. If $\omega,\omega'\in R $ satisfy $|\text{base}(h^{k(n)}_{\omega})|=|\text{base}(h^{k(n)}_{\omega'})|$, then their inverses $(h^{k(n)}_{\omega})^{-1}$ and $(h^{k(n)}_{\omega'})^{-1}$ have exactly the same base points. Thus, by Proposition \ref{prop9}, there exists $a \in \mathcal{A}$, such that $$h^{k(n)}_{\omega'}=h^{k(n)}_\omega a.$$
    In this case, $r^n_\omega$ and $r^n_{\omega'}$ differ by a product of at most $l(n)$ elements of $\mathcal{H}$.  
    
    We obtain that the walk at time $n$ is located in a subset $A_n\subset G$ satisfying  $$|A_n| \leq Bn|\mathcal{H}|^{C\log(n)}.$$
    
    This contradicts Corollary \ref{cor3}.
\end{proof}

\subsection{Stationary measures are compactly supported}
We now prove Theorem \ref{thm2}. By Proposition \ref{prop12}, we can consider $l\geq 1$ such that $p_{l}( \omega)$ is not constant. We assume that $l$ is minimal for this property, that is, $p_1, \dots, p_{l-1}$ are constant almost surely. Consider $\pi\colon X \rightarrow \mathbb{P}^2$ the blow-up of the points  $p_1, \dots, p_{l-1}$. Let $p_{A_1}$ and $p_{A_2}$ be distinct points on $X_\infty$ such that the subsets $A_i=\{\omega \in \Omega \mid \, p_l(\omega) = p_{A_i}\}$ have positive measure for $i\in \{1,2\}$.

\begin{prop}\label{prop13}
    There exists a neighborhood $V_\infty $ of $X_\infty$, and a neighborhood $V_i\subset V_\infty$  of $p_{A_i}$ for $i\in \{1,2\}$, such that
    \begin{enumerate}
        \item $V_1 \cap V_2= \emptyset,$
        \item for every $q \in \pi^{-1}(\mathbb{C}^2) \cap (V_\infty \setminus V_i)$  and $\omega \in A_i$,
        $$G_\omega(\pi(q)) >0.$$
    \end{enumerate}
\end{prop}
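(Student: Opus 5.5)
The plan is to make the argument of Proposition \ref{prop11} uniform in $\omega \in A_i$. The key point is that the relevant data are drawn from a finite set. On $A_i$ the first $l$ base points $p_1,\dots,p_{l-1},p_{A_i}$ are fixed. After the normalisation of Section \ref{section3.2} (an affine factor from the finite set $\mathcal{A}\cup\{1\}$, fixed since $p_1$ is constant), they are the first $l$ base points of $h^k_\omega$ for $k$ large. These base points are read off from the beginning of the word $h_k\cdots h_1$ in the finite alphabet $\mathcal{H}$. By Proposition \ref{prop7}, each $h\in\mathcal{H}$ contributes at least one base point, so it suffices to look at the first $l$ letters $h_l\cdots h_1$. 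Hence for $\omega\in A_i$ the prefix $h^{l}_\omega$ takes only finitely many values $w \in \mathcal{H}^l$. For each such $w$, the first $l$ base points of $w$ are $p_1,\dots,p_{l-1},p_{A_i}$.

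Fix one such prefix $w$, with minimal resolution $\pi_w\colon X_w\to\mathbb{P}^2$. Since $w$ has at least $l$ base points, $\pi_w$ factors through $\pi\circ\beta_w$, where $\beta_w\colon X_w\to X'$ and $X'$ is the blow-up of $p_1,\dots,p_{l-1},p_{A_i}$. By Proposition \ref{prop8}, $\sigma_w=w\circ\pi_w$ sends all of $X_{w,\infty}$ except the last exceptional divisor to the point $w(I)\neq I$. That last divisor lies over $p_{A_i}$, possibly after further infinitely near points, and it maps onto $L_\infty$. Pushing down to $X$, we conclude that $w\circ\pi$ maps $X_\infty\setminus\{p_{A_i}\}$ to the point $w(I)$. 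By Proposition \ref{prop6}, $w(I)$ lies in the interior (in $\mathbb{P}^2$) of the closure of $V^+_{R,\epsilon}$ for suitable uniform $\epsilon,R$, since that set is a neighborhood of $L_\infty\setminus I$. Now choose a small neighborhood $V_i^w$ of $p_{A_i}$ in $X$. The compact set $X_\infty\setminus V_i^w$ then has a neighborhood $W^w$ with $w\circ\pi(W^w\cap\pi^{-1}(\mathbb{C}^2))\subset V^+_{R,\epsilon}$. The continuity needed here holds because $\sigma_w$ is a morphism, and the compact set upstairs maps to a point.

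Next we check that $G_\omega>0$ there, uniformly in $\omega$. For $\omega\in A_i$ with prefix $w$ we have $G_\omega = c\cdot u_{\omega'}\circ w$ on $\mathbb{C}^2$, where $u_{\omega'}$ is the limit attached to the tail sequence $(h_{k})_{k>l}$ and $c=1/\deg w$. Proposition \ref{prop18} states that $V^+_{R,\epsilon}\subset\{u>0\}$ with constants independent of the sequence in $\mathcal{H}^\mathbb{N}$. Therefore $G_\omega\circ\pi>0$ on $W^w\cap\pi^{-1}(\mathbb{C}^2)$ minus the part inside $V_i^w$. To conclude, take $V_i:=\bigcup_w V_i^w$ shrunk to a common small neighborhood; choose the $V^w_i$ first inside a fixed small ball around $p_{A_i}$. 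Set $V_\infty := \bigcap_{i,w} (W^w\cup V_i)$, with $V_1, V_2 \subset V_\infty$ after shrinking. Disjointness of $V_1,V_2$ follows from choosing the balls small, since $p_{A_1}\neq p_{A_2}$. The order of choices must be handled with some care: first the balls $V_i$, then the $W^w$ adapted to $X_\infty\setminus V_i$.

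The main obstacle is the second step. We must verify carefully that $w\circ\pi$ really extends continuously near $X_\infty\setminus\{p_{A_i}\}$ with value $w(I)$, and not just that it does so generically. This requires that no other base point of $w$ lies on $X_\infty$ away from $p_{A_i}$. That follows from Proposition \ref{prop8}, because all base points are infinitely near to one another in a chain starting at $p_1$. A second subtlety is the finiteness claim that $l$ letters suffice. If that claim fails, I would instead use any $k\ge\tau_l(\omega)$ and restrict to a positive-measure subset of $A_i$ where $\tau_l$ is bounded, which keeps the number of prefixes finite.
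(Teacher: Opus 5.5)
Your proposal is correct and follows the paper's proof. The paper likewise fixes a prefix length $k$ (chosen so that any product of $k$ letters of $\mathcal{H}$ has more than $l$ base points, where you use $l$ letters), uses that there are only finitely many such prefixes $h_k\cdots h_1$, applies the argument of Proposition \ref{prop11} to each of them, and then intersects the finitely many neighborhoods and shrinks $V_1,V_2$ to be disjoint; your write-up only makes explicit what the paper leaves implicit, namely the $\omega$-independence of the constants in Proposition \ref{prop18} and the order in which $V_i$ and $V_\infty$ are chosen.
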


\begin{proof}
    We can assume $l>1$ and $p_1=I$.
    Consider $k \geq 1$ satisfying $$\min_{h_1, \dots, h_k \in \mathcal{H}^k} \sum_{i=1}^k |\text{base}(h_i)| > l,$$
    so that the sequence of base points of any product of $k $ elements of $\mathcal{H} $ contains at least $l$ base points.
    Let $\omega \in A_i$. By the proof of Proposition \ref{prop11}, there exist a neighborhood $V_\infty$ of $X_\infty$ and a neighborhood $V_{i}\subset V_\infty$ of $p_{A_i}$ such that $G_\omega$ is positive on $\pi (V_\infty \setminus V_{i})$. There are only finitely many products $h_k \dots h_1$, so we can choose $V_\infty $ and $V_{i}$ that works for every $\omega \in A_i$. Moreover, we can choose $V_{1}$ and  $V_{2}$ to be disjoint.
\end{proof}

\begin{proof}[Proof of Theorem 1.1]
Let $\nu$ be a $\mu$-stationary ergodic probability measure on $\mathbb{C}^2$. We consider the dynamics on $X\setminus X_\infty$ via the isomorphism with $\mathbb{C}^2$ induced by $\pi$. If $\nu$ is not compactly supported, there exists  $U \subset V_\infty$ of positive measure. We can assume that $U $ is bounded and $\nu(U \setminus V_{1}) >0$. If $q$ is a point in $U\setminus V_{1}$ and $\omega \in A_1$, then by Proposition \ref{prop13}, the orbit $f^n_\omega(q)$ leaves every compact subset of $X$ and thus visits $U\setminus V_{1}$ only finitely many times. This contradicts Birkhoff ergodic theorem.
Thus, every ergodic $\mu$-stationary measure is supported on $\mathbb{C}^2\setminus \pi(V_\infty)$. We obtain the result for any stationary measure by ergodic decomposition.
\end{proof}

\section{Stiffness}

We now prove Theorems \ref{thm3} and \ref{thm4} by combining Roda's results \cite{Roda} and Theorem \ref{thm7}. We conclude this section by the proof of Theorem \ref{thm5}. We also state further results concerning periodic orbits and the non purely loxodromic case. We refer to \cite{LQ} for the basics on the theory of smooth random dynamical systems.

\subsection{Examples} We can construct an example of a non-elementary and purely loxodromic $\mu$ with no $\mu$-stationary measures as follows. If $f$ is a loxodromic automorphism, we denote by $K_f\subset \mathbb{C}^2$ the compact set of points with bounded backward and forward orbit. Recall that if $\Sigma_\mu$ is symmetric, then the support of any $\mu$-stationary measure is a $\Gamma_\mu$-invariant compact set. Thus, if $f$ and $g$ are two loxodromic automorphisms generating a non-elementary, purely loxodromic subgroup, such that $K_f$ and  $K_g$ do not intersect, then for any measure $\mu$ supported on $f, g $ and their inverses, there exists no $ \mu$-stationary measure. If $f$ and $g$ are loxodromic automorphisms generating a non-elementary subgroup, it suffices to conjugate $g$ by a large translation $g':=\tau g \tau^{-1}$ to obtain that $K_f $ and $K_{g'}$ are disjoint.

It is also not difficult to construct examples of non-elementary, purely loxodromic measures having a finite orbit. Indeed, it suffices to take a ping-pong group as in Section \ref{section2.2}, with $f$ a Hénon map having a fixed point at the origin. 

To our knowledge, there is no example of a non-elementary, purely loxodromic and non-dissipative measure $\mu$ that admits a non-atomic $\mu$-stationary measure.  

\subsection{Lyapunov exponents}
Recall that for $f \in G$, we denote by $\mathrm{Jac}(f)\in \mathbb{C}$ the complex Jacobian of $f$, defined by $f^*dx\wedge dy=\mathrm{Jac}(f) dx\wedge dy$. We say that a finitely supported measure $\mu$ on $G$ is \textbf{non-dissipative} if $$\sum_{f \in \Sigma_\mu} \mu(f) \log |\mathrm{Jac}(f) | \geq 0.$$
As explained in the introduction, we cannot expect stiffness to always hold when $\mu$ is dissipative. 

The support of $\mu$ is finite, and as the support of $\nu$ is compact by Theorem \ref{thm2}, the Lyapunov exponents of $\nu$,
$$\lambda^+:=\lim \frac{1}{n}\log \Vert d_qf^n_\omega \Vert, \quad \lambda^-:=\lim \frac{1}{n}\log \Vert (d_qf^n_\omega)^{-1}\Vert^{-1}$$
are well-defined for $\mu^\mathbb{N}\otimes \nu$-almost every $(\omega,q) \in \Omega \times \mathbb{C}^2 $, and we have $$\lambda^+ + \lambda^- = \sum_{f \in \Sigma_\mu} \mu(f) \log |\mathrm{Jac}(f)|.$$
Thus  $\lambda^+ + \lambda^- \geq 0$ in the non-dissipative case. If the two exponents $\lambda^+$ and $\lambda^- $ are non-negative, the invariance principle of Crauel \cite[Theorem 5.1]{Cr} and Avila-Viana \cite[Theorem B]{Av} allows us to conclude that $\nu$ is $\mu$-invariant. Otherwise, the stationary measure $\nu$ is \textbf{hyperbolic}: $\lambda^+>0>\lambda^-.$
\subsection{Stable manifolds}
Consider a hyperbolic ergodic $\mu$-stationary measure $\nu$. The stable manifolds 
$$W^s(\omega,q) := \{q'\in \mathbb{C}^2\, | \limsup \frac{1}{n} \log d(f^n_\omega(q),f^n_\omega(q')) <0\} $$
are complex immersed submanifolds for $\mu^\mathbb{N}\otimes \nu$-almost every $(\omega,q)$. They are biholomorphic to $\mathbb{C}$ by \cite[Proposition 7.8]{CD1}. The next proposition shows that stable manifolds are not non-random and is a key ingredient for applying Roda's theorem.

\begin{prop}\label{prop15}
    For $\mu^\mathbb{N}\otimes \nu$-almost every $(\omega,q)$ and $\mu^\mathbb{N}$-almost every $\omega'$, we have $W^s(\omega, q) \neq W^s(\omega',q).$
\end{prop}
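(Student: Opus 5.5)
The plan is to show that $W^s(\omega,q)\subset\{G_\omega=0\}$, and then to use Propositions \ref{prop11} and \ref{prop12} to separate the stable manifolds through their points at infinity in a suitable model. By Theorem \ref{thm2}, $\nu$ has compact support $K$.

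\textbf{Step 1: the stable manifold lies in the zero locus.} For $\mu^\mathbb{N}\otimes\nu$-almost every $(\omega,q)$, Poincaré recurrence (or ergodicity of the skew product) gives a subsequence $n_j$ with $f^{n_j}_\omega(q)\in K$. Since $d(f^n_\omega(q'),f^n_\omega(q))\to 0$ for $q'\in W^s(\omega,q)$, the points $f^{n_j}_\omega(q')$ stay in a bounded neighborhood of $K$. Hence
$$G_\omega(q')=\lim_j \frac{1}{\deg(f^{n_j}_\omega)}\log^+\Vert f^{n_j}_\omega(q')\Vert=0,$$
using the existence of the limit from Theorem \ref{thm1}. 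So $W^s(\omega,q)\subset\{G_\omega=0\}$.

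\textbf{Step 2: the stable manifold is unbounded and meets infinity only at the base point sequence.} $W^s(\omega,q)$ is an injectively immersed copy of $\mathbb{C}$, so it is non-constant and entire, hence unbounded by Liouville. Its closure in $\mathbb{P}^2$ therefore meets $L_\infty$. By Step 1 and Theorem \ref{thm1}.(2), it meets $L_\infty$ only at $p_1(\omega)$. Lifting to $X_l$ (the blow-up of $p_1(\omega),\dots,p_l(\omega)$), the closure of the strict transform of $W^s(\omega,q)$ still meets $X_{l,\infty}$, since $\pi_l$ is proper. By Proposition \ref{prop11}, it meets $X_{l,\infty}$ exactly at $p_{l+1}(\omega)$. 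Thus the sequence $(p_l(\omega))_l$ is determined by the set $W^s(\omega,q)$ alone.

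\textbf{Step 3: conclusion.} Suppose $W^s(\omega,q)=W^s(\omega',q)$ on a set of positive $\mu^\mathbb{N}\otimes\nu\otimes\mu^\mathbb{N}$-measure. By Step 2, and by Step 1 applied to both $\omega$ and $\omega'$ (using that the second coordinate of $\mu^\mathbb{N}\otimes\nu$ is $\nu$ for both), we would get $p_l(\omega)=p_l(\omega')$ for all $l$. By Fubini, this would happen on a set of positive $\mu^\mathbb{N}\otimes\mu^\mathbb{N}$-measure, contradicting Proposition \ref{prop12}.

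\textbf{Main obstacle.} The delicate step is Step 2: showing that the closure of the stable manifold at infinity in $X_l$ is nonempty, and that Proposition \ref{prop11} forces it to be exactly $p_{l+1}(\omega)$. The immersed curve could a priori accumulate in a complicated way. Only properness and unboundedness are needed, though, since any accumulation point on $X_{l,\infty}$ must lie in $\overline{\{G_\omega=0\}}\cap X_{l,\infty}=\{p_{l+1}(\omega)\}$.

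A secondary care point is Step 1: the recurrence must produce a full-measure set of $(\omega,q)$ for which the orbit returns to $K$ infinitely often. This holds for $\mu^\mathbb{N}\otimes\nu$-almost every point because $\nu(K)=1$ and the skew product preserves $\mu^\mathbb{N}\otimes\nu$; indeed $f^n_\omega(q)\in K$ for all $n$ almost surely.
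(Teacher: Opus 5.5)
Your proposal is correct and follows essentially the same route as the paper. The steps match: compactness of $\mathrm{supp}\,\nu$ gives $W^s(\omega,q)\subset\{G_\omega=0\}$, the fact that $W^s(\omega,q)$ is biholomorphic to $\mathbb{C}$ forces its closure to meet $X_{l,\infty}$, Proposition~\ref{prop11} identifies that intersection as $p_{l+1}(\omega)$, and Proposition~\ref{prop12} concludes. You add details the paper leaves implicit: Liouville for unboundedness, the recursive recovery of $(p_l(\omega))_l$ from the set $W^s(\omega,q)$ (the models $X_l$ depend on $\omega$), the Fubini step, and the tacit use of $\deg(f^n_\omega)\to\infty$ in Step 1.
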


\begin{proof}
 If a point $q$ is in the support of $\nu$, then by compactness of the support of $\nu$, we have  $$W^s(\omega,q)\subset \{G_\omega=0\}.$$ 
 Moreover, as $W^s(\omega,q)$ is biholomorphic to $\mathbb{C}$, its closure in any model $X_l$ intersects the divisor at infinity $X_{l,\infty}$. Thus, by Proposition \ref{prop11} we have $W^s(\omega,q) \cap  X_{l,\infty}= p_{l+1}(\omega)$.
 We conclude by Proposition \ref{prop12}.
\end{proof} 

\begin{proof}[Proof of Theorems \ref{thm3} and \ref{thm4}]
We now apply Roda's theorem \cite{Roda}. Note that \cite[Theorem 1.1.1]{Roda} is given in the setting of random dynamical systems on compact complex surfaces, but the result can be extended to compactly supported stationary measures on $\mathbb{C}^2$. We only need to replace \cite[Corollary 3.3.2]{Roda} by Proposition \ref{prop15}.    
\end{proof}

\subsection{Classification of invariant measures under positivity of the entropy}

The proof of Theorem \ref{thm5} is a combination of Theorem \ref{thm4} with results due to Cantat--Dujardin \cite[Section 11]{CD1} and Brown--Rodriguez Hertz \cite[Section 13.2.4]{BRH}. The main part of the proof is contained in the article of Cantat-Dujardin, so we only sketch the main ideas. Up to replace $f $ by a power $f^k$, we can consider a measure 
$$\mu_{a,b,c,d}:= a\delta_f + b\delta_{f^{-1}}+c \delta_g + d \delta_{g^{-1}}$$
where $f $ and $g$ generate a non-elementary, purely loxodromic subgroup of $G_\nu$. The measure $\nu$ is $G_\nu$-invariant, and thus it is a $\mu_{a,b,c,d} $-stationary, compactly supported measure on $\mathbb{C}^2$. Using ideas of Brown and Rodriguez Hertz, Cantat--Dujardin prove that there exist $a,b,c,d$ such that $\nu$ is hyperbolic as a $\mu_{a,b,c,d} $-stationary measure. We can also assume that the sum of the Lyapunov exponents of $\nu$ is non-negative. Thus, we can apply Theorem \ref{thm4} to conclude.  

\subsection{Periodic points}
We say that a point $q \in \mathbb{C}^2$ is $\Gamma$-periodic if it has finite orbit: $|\Gamma(q)|<\infty$. The next result shows that there are only finitely many periodic orbits.

\begin{prop}
    If $\Gamma$ is a finitely generated non-elementary subgroup of $G$, then there are only finitely many $\Gamma$-periodic points.
\end{prop}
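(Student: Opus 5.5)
The plan is to reduce the statement to the classical finiteness of periodic points of a single loxodromic automorphism, and then use non-elementarity to bound the orbit sizes.

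\textbf{Step 1: pick two independent loxodromic elements.} Since $\Gamma$ is non-elementary, by Proposition \ref{prop1} it contains two loxodromic elements $f,g$ with distinct axes on the Bass-Serre tree. In particular $f^n \neq g^m$ for all non-zero $n,m$.

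\textbf{Step 2: control the finite orbits.} Let $q$ be $\Gamma$-periodic. Its orbit $\Gamma(q)$ is a finite $\Gamma$-invariant set. Consider the permutation action of $\Gamma$ on $\Gamma(q)$. For $N=|\Gamma(q)|!$, the element $f^N$ fixes every point of $\Gamma(q)$, and so does $g^N$. So $q$ is a common fixed point of $f^N$ and $g^N$. The difficulty is that $N$ depends on $q$. I would therefore argue in two stages.

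\emph{Stage (a): uniformly bound the orbit size.} I would first aim to bound $|\Gamma(q)|$ independently of $q$. A loxodromic $f$ is conjugate to a product of Hénon maps of dynamical degree $d=\lambda(f)\geq 2$. By the Bedford--Smillie count (or Friedland--Milnor), $f^n$ has exactly $d^n$ fixed points, counted with multiplicity. This alone does not bound orbit sizes. So instead I would exploit that every point of $\Gamma(q)$ is $f$-periodic and $g$-periodic, hence lies in $K_f\cap K_g$, which is compact.

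\emph{Stage (b): conclude by Zariski-closure.} The cleaner route is the following. Let $P$ be the set of $\Gamma$-periodic points. Suppose $P$ is infinite. Its Zariski closure $Z$ is then $\Gamma$-invariant: each $\gamma\in\Gamma$ maps $P$ to itself, hence maps $Z$ to itself. Its positive-dimensional part is a $\Gamma$-invariant curve, since the union of the finitely many top-dimensional components is permuted by $\Gamma$. If $Z=\mathbb{C}^2$, I would instead use that periodic points of the fixed loxodromic $f$ are countable and discrete, and derive a contradiction via equidistribution. This is messier, so I prefer to keep to curves. A loxodromic polynomial automorphism preserves no algebraic curve in $\mathbb{C}^2$: the closure at infinity would have to meet $L_\infty$ at the attracting point and at the indeterminacy point, and the degree growth $\deg(f^n)=d^n$ contradicts invariance (Bedford--Smillie). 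This rules out positive-dimensional components, but only once we know $Z\neq\mathbb{C}^2$.

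\textbf{Main obstacle and fallback.} The main obstacle is precisely excluding Zariski density of the periodic points. My fallback is to use the dynamics at infinity, via Lamy's results on elliptic subgroups. For a periodic point $q$, the stabilizer $\Gamma_q$ has finite index in $\Gamma$. The finite-index subgroup $\Gamma_q$ is again non-elementary, and it fixes $q$. Its derivative representation at $q$, together with the fact that a non-elementary group fixing a point cannot have all its elements with that point in $K_f$ for arbitrarily many distinct $q$, should give a contradiction with the finiteness of common fixed points of $f^N,g^N$.

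That finiteness I would justify as follows. The set $\mathrm{Fix}(f^N)\cap\mathrm{Fix}(g^N)$ is finite for each $N$, because $f^N$ has isolated fixed points. Then I would show that $N$ can be taken bounded, using that $f$ and $g$ have finitely many periodic points of each period lying in the compact $K_f\cap K_g$, and that hyperbolicity of $f$ on $K_f$ forces the periodic points there to be separated uniformly. I expect this uniform bound to be the delicate step.
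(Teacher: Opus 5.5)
Your treatment of the case where the Zariski closure $Z$ of the set $P$ of $\Gamma$-periodic points is a proper subset of $\mathbb{C}^2$ is fine and matches the paper. $\Gamma$ permutes the one-dimensional components of $Z$, so some power of a loxodromic element of $\Gamma$ preserves an algebraic curve, which is impossible. If $Z$ is finite, then so is $P$.

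There is a genuine gap: the case $Z=\mathbb{C}^2$ is the actual content of the proposition, and you leave it open. You identify it yourself as the main obstacle, and none of your fallbacks can close it:
\begin{enumerate}
\item Equidistribution of periodic points in the sense of Bedford--Lyubich--Smillie concerns all points of period $n$ of a single map. It says nothing about an arbitrary Zariski-dense subset of $\mathrm{Per}(f)\cap\mathrm{Per}(g)$.
\item Arguments localized at one periodic point $q$ (the finite-index stabilizer $\Gamma_q$, its derivative at $q$) cannot produce a contradiction. A non-elementary, purely loxodromic group may have a global fixed point: take a ping-pong group as in Section~\ref{section2.2}, built from a Hénon map fixing the origin and its conjugate by a linear rotation.
\item The claim that periodic points of $f$ are uniformly separated is false. $f^n$ has $d^n$ fixed points counted with multiplicity, so $f$ has infinitely many periodic points in the compact set $K_f$, and they accumulate (they are dense in $J^*$). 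Moreover, $f$ need not be hyperbolic at all.
\item Since $\mathrm{Fix}(f^N)\cap\mathrm{Fix}(g^N)$ is finite for each $N$, a uniform bound on $|\Gamma(q)|$ is equivalent to the proposition itself. So Stage (a) is circular.
\end{enumerate}

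The missing ingredient is a rigidity theorem of dynamical Manin--Mumford type: two regular automorphisms generating a non-elementary group cannot share a Zariski-dense set of periodic points. This is what the paper uses. Assuming $P$ is Zariski dense, it invokes \cite[Lemma 6.5]{DF} to find in $\Gamma$ two regular automorphisms $f,g$ that generate a non-elementary subgroup and share a Zariski-dense set of periodic points. It then gets a contradiction from \cite[Theorem B]{Abboud2}. This input is deep: to my knowledge the proofs go through canonical heights and arithmetic equidistribution of small points, after a specialization argument. It cannot be replaced by soft considerations about $K_f\cap K_g$, hyperbolicity, or local behaviour at a fixed point. As it stands, your argument only shows that $P$ is either finite or Zariski dense.
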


\begin{proof}
Suppose first that the set of $\Gamma$-periodic points is Zariski-dense. By \cite[Lemma 6.5]{DF}, we can assume that $\Gamma$ contains two regular automorphisms $f$ and $g$ that generate a non-elementary subgroup and that share a Zariski-dense set of common periodic points. Thus, we obtain a contradiction by \cite[Theorem B]{Abboud2}. Remark that the set of $\Gamma$-periodic points is $\Gamma$-invariant. If the Zariski closure of this set is a curve $\mathcal{C}$, then the intersection of $\gamma(\mathcal{C})$ with $ \mathcal{C}$ is infinite for any $\gamma \in \Gamma$. Thus, Bezout's theorem implies that there exist a curve which is preserved by a finite-index subgroup of $\Gamma$. This contradicts the fact that a loxodromic automorphism of $\mathbb{C}^2$ does not preserve any curve \cite{BS}.     
\end{proof}

\subsection{The non-purely loxodromic case}
Beyond the purely loxodromic case, that is, if the group generated by $\Sigma_\mu$ contains an elementary automorphism, we do not know how to prove compactness of the support of stationary measures. Nevertheless, if one is interested in compactly supported stationary measures, then the classification problem is easier in this setting. Indeed, if $\Sigma_\mu$ is symmetric and contains an elementary map $e$ with $K_e$ finite, then any compactly supported stationary measure is finitely supported. See \cite[Section 6]{FM} for a classification of elementary maps.

\end{document}